\providecommand{\N}{}
\renewcommand{\N}{{\mathbb N}}
\newcommand{\E}[1]{{\mathbf E}\left[#1\right]}				
\newcommand{\e}{{\mathbf E}}
\newcommand{\p}[1]{{\mathbf P}\left\{#1\right\}}
\newcommand{\I}[1]{{\mathbf 1}_{[#1]}}
\newcommand{\set}[1]{\left\{ #1 \right\}}
\newcommand{\Cprob}[2]{\mathbf{P}\set{\left. #1 \; \right| \; #2}} 
\newcommand{\probC}[2]{\mathbf{P}\set{#1 \; \left|  \; #2 \right. }}
\newcommand{\Cexp}[2]{\mathbf{E}\left[\left. #1 \; \right| \; #2\right]}
\newcommand\cF{\mathcal F}
\newcommand\cL{{\mathcal L}}
\newcommand\cN{\mathcal N}
\newcommand\cP{\mathcal P}
\newcommand{\pran}[1]{\left(#1\right)}
\providecommand{\eps}{}
\renewcommand{\eps}{\varepsilon}
\providecommand{\ora}[1]{}
\renewcommand{\ora}[1]{\overrightarrow{#1}}
\DeclareRobustCommand{\SkipTocEntry}[5]{} 
\newtheorem{thm}{Theorem}
\newtheorem{lem}[thm]{Lemma}
\newtheorem{prop}[thm]{Proposition}
\newtheorem{cor}[thm]{Corollary}
\numberwithin{equation}{section}
\numberwithin{thm}{section}
\newcommand{\geom}{\operatorname{Geom}}
\begin{document}

\title{Random friend trees} 
\author[Addario-Berry]{Louigi Addario-Berry}
\address{Department of Mathematics and Statistics, McGill University, 
		Montr\'eal, Qu\'ebec,  Canada}
\email{louigi.addario@mcgill.ca}
\author[Briend]{Simon Briend}
\address{Université Paris-Saclay, CNRS \\ Laboratoire de Mathématiques d'Orsay \\ 91404, Orsay, France}
\email{simon.briend@universite-paris-saclay.fr}
\author[Devroye]{Luc Devroye}
\address{School of Computer Science, McGill University, 
		Montr\'eal, Qu\'ebec,  Canada}
\email{lucdevroye@gmail.com}
\author[Donderwinkel]{Serte Donderwinkel} 
\address{Bernoulli Institute and CogniGron (Groningen Cognitive Systems and Materials Center), University of Groningen (Univ Groningen), Nijenborgh 4, NL-9747 AG Groningen, Netherlands}
\email{s.a.donderwinkel@rug.nl}
\author[Kerriou]{C\'eline Kerriou}
\address{Department of Mathematics and Computer Science, Universit\"at zu K\"oln, Weyertal 86-90, 50931 Cologne, Germany}
\email{ckerriou@math.uni-koeln.de}
\author[Lugosi]{G\'abor Lugosi}
\address{Department of Economics and Business, Pompeu
  Fabra University, Barcelona, Spain;
ICREA, Pg. Lluís Companys 23, 08010 Barcelona, Spain;
Barcelona Graduate School of Economics}
\email{gabor.lugosi@gmail.com}
\subjclass[2010]{60C05,60J80,05C05}

\maketitle

\begin{abstract}
    We study a random recursive tree model featuring complete redirection called the random friend tree and introduced by \citet{SaramakiRFT}. Vertices are attached in a sequential manner one by one by selecting an existing target vertex and connecting to one of its neighbours (or friends), chosen uniformly at random. This model has interesting emergent properties, such as a highly skewed degree sequence. In contrast to the preferential attachment model, these emergent phenomena stem from a local rather than a global attachment mechanism.  The structure of the resulting tree is also strikingly different from both  the preferential attachment tree and the uniform random recursive tree: every edge is incident to a macro-hub of asymptotically linear degree, and with high probability all but at most $n^{9/10}$ vertices in a tree of size $n$ are leaves. We prove various results on the neighbourhood of fixed vertices and edges, and we study macroscopic properties such as the diameter and the degree distribution, providing insights into the overall structure of the tree. We also present a number of open questions on this model and related models.
\end{abstract}


\section{Introduction}\label{sec:intro}

{\bf Growing networks.} Various real-life phenomena, including contagion, social networks, rumour spreading, and the internet, have been described by models of growing networks (see e.g.\ \citet{kumar2020analysis}). Among these models, preferential attachment, introduced by \citet{doi:10.1126/science.286.5439.509} is arguably the most well-studied. In this model, vertices arrive one by one, and at each time a new vertex connects to one or more existing vertices with probability proportional to their degree. The degree sequence of this model satisfies the so-called scale-free property, which is often also observed in real-world models. In contrast to models such as the configuration model or the inhomogeneous random graph model, this property of the degree sequence is an intrinsic result of the dynamics rather than a pre-imposed characteristic. This makes the preferential attachment and its related models popular tools for understanding why real-world models may develop in this way. These models however require the knowledge of the full degree sequence in order to attach a new vertex. This requirement is unnatural for real-world networks and impractical in implementation.

{\bf The model. }The friend tree is a randomly growing network of which the dynamic also autonomously produces highly skewed degree sequences,  but whose attachment rule is based on redirection and requires only local information. Models involving redirection were introduced by \citet{KlKuRaRaTo1999} in directed, rooted graphs. In these models, a new vertex connects to a uniformly random vertex, or, with probability $p$, it connects to the ancestor of a randomly selected vertex. This mechanism, called {\em directed redirection}, yields a shifted linear preferential attachment rule, where the new vertex connects to a vertex with degree $d$ with probability proportional to $d-2+1/p$. A variant was studied by \citet{banerjee2022co}, where a new vertex attaches to the graph by randomly sampling a vertex and attaching to the endpoint of a path of random length directed towards the root. The undirected version of the model that we study was introduced by \citet{SaramakiRFT} and yields strikingly different graphs. In the works of \citet{SaramakiRFT} and \citet{EvansRFT}, the authors make the claim that the tree has the same law as a preferential attachment tree. This turns out to be inaccurate, as was noted by \citet{CanningsRFT}. In the undirected version, newly added vertices connect to a neighbour of a randomly selected vertex. More precisely, the starting tree $T_2$ consists of a single edge joining vertices labelled $1$ and $2$. Inductively, for $n \ge 2$, let $V_{n} \in \{1,\ldots,n\}$ be chosen uniformly at random and let $W_{n}$ be a uniformly random neighbour of $V_{n}$ in $T_{n}$. Then build $T_{n+1}$ from $T_{n}$ by attaching a new vertex labelled $n+1$ to the vertex $W_{n}$, see Figure~\ref{fig:RednerProcessBis}. Note that, for all $n\geq 2$, the vertex set of $T_n$ is $\{1,\ldots,n\}$. Moreover, setting $W_1=1$, then the edge set of $T_n$ is $\{\{m+1,W_m\},1 \le m \le n-1\}$.
We call $T_n$ the \textit{$1$-step friend tree}, inspired by the following picture. In a $1$-step friend tree a person selects a random stranger and befriends a uniformly random friend of theirs. A $2$-step friend tree would correspond to, instead, befriending a uniformly random friend of the stranger's random friend. In most of this work, we refer to the \textit{$1$-step friend tree} simply as the \textit{random friend tree (RFT)}.

\begin{figure}[h]
    \centering
    \includegraphics[scale=0.7]{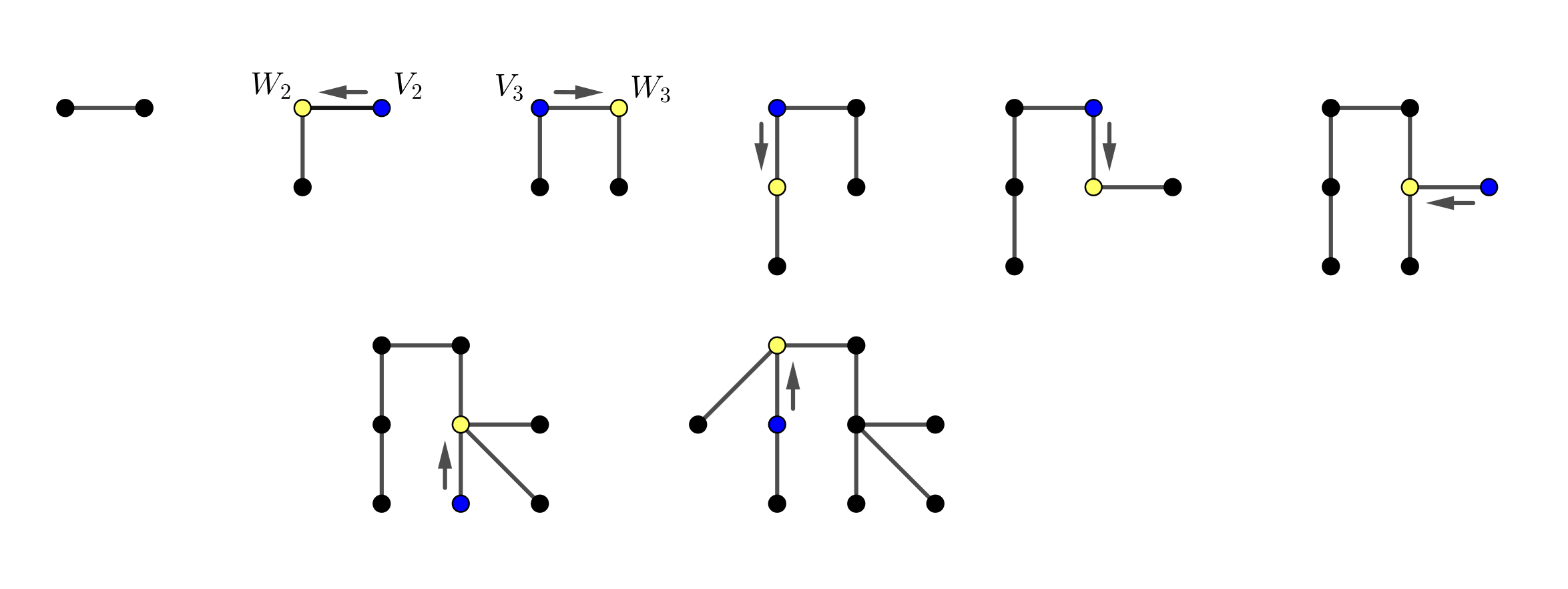}
    \caption{A realisation up to $n=9$, with $V_n$ in blue and $W_n$ in yellow.}
    \label{fig:RednerProcessBis}
\end{figure}

{\bf Motivation and challenges.} This model gives rise to numerous interesting emergent phenomena that make it worth studying. A first feature of the model is a rich-get-richer mechanism. In the preferential attachment model, it is embedded into the dynamics that vertices with many neighbours accumulate more neighbours. In the friend tree, vertices that have many neighbours \emph{with low degree} accumulate more neighbours. It turns out that, the highest degree vertices mostly have leaves as neighbours, so the growth of high-degree vertices is mostly determined by their degree. In fact, the reinforcement is strengthened by a second effect: the larger the degree of a vertex, the less likely it is for its neighbours to increase their degree, so the more likely it is for the high degree vertex itself to increase its degree. Unlike in the preferential attachment model, this is a result of the dynamics rather than a built-in feature. 

Furthermore, the model is part of a whole family of models that in some sense interpolate between the uniform random recursive tree (URRT) and the linear preferential attachment tree (PA tree). Indeed, in the $k$-step friend tree, new vertices attach to the endpoint of a random walk of length $k$ that starts at a random vertex. If $k=0$, the resulting model is the URRT. If $k$ was chosen so large that the random walk is perfectly mixed, the resulting model would be a PA tree. In other words, for $k$ large, the starting point of the random walk has a negligible effect and the end point is distributed proportionally to the degree of each vertex. Our work demonstrates several features of the $1$-step friend tree which are remarkably different from both the URRT and the PA tree. It is therefore worth investigating what range of behaviour can be observed in the entire family. In a related model, \citet{englander2023structural} study a ``random walk tree builder'' where a tree is grown by a random walk. More precisely, a walker is moving at random on the tree and at each time step $n$, with probability $n^{-\gamma}$, a neighbour is added to the vertex where the walker is. They prove that this model is actually a PA tree for an appropriate choice of $\gamma$. Unlike the friend tree model, where a new random walk is started at every time step, a single random walk is able to produce a tree displaying a rich gets richer phenomenon. This further motivates the study of the friend tree model in its whole range, and suggests the investigation of its possible links to the ``random walk tree builder'' model.

The challenges of studying $1$-step friend trees are numerous. For example, even if the process grows locally (one needs to know the neighbours of the randomly picked vertex to understand the connection probabilities), tracking only local information does not suffice to study how degrees evolve over time. Indeed, to understand how the vertex degrees change within two time steps, one must keep track of the degrees of second neighbours, and in general, for $t$ time steps one must track the $t$'th neighbourhood of every vertex. Note that this challenge does not arise in either directed redirection or the preferential attachment model; in those models the growth of a vertex degree only depends on the vertex degree itself, so degrees can be tracked on their own without considering the global structure of the tree. For $k$-step friend trees, the dependencies grow stronger as $k$ increases, bringing in new challenges that we do not attempt to tackle in the current work.

{\bf Results and comparison. }Some of the structural properties of random friend trees are comparable to those of the URRT. In Theorem \ref{thm:Diameter}, we show that the diameter is of logarithmic order almost surely (like for the URRT and the PA tree (\cite{Pittel_height_trees})). Moreover, as we show in Theorems \ref{thm:LeafDepth} and \ref{thm:LeafDepthURRT} respectively, both in the random friend tree and in the URRT, the largest distance to the nearest leaf in the $n$-vertex tree is $\Theta(\log(n)/\log\log(n))$ in probability. 

However, the interaction between neighbouring vertices in the attachment procedure yields significant structural differences between the random friend tree and both the URRT and the PA tree. The degree sequence might be the most illustrative of this difference. Regarding the high-degree vertices, we prove in Theorem \ref{thm:AbundanceHubs} that ``hubs'' of linear degree appear almost surely, whereas in a URRT the maximum degree is logarithmic (\citet{devroye1995strong}) and in linear preferential attachment tree the largest degree is of order $\sqrt{n}$  (\citet[Theorem 1.17]{hofstad_2016}). In fact, the dynamics of low-degree vertices `feeding their neighbours' implies that for every edge, at least one of the endpoints has asymptotically linear degree almost surely, so that a highly modular network emerges. This phenomenon has not been observed in any other random tree model, as far as the authors of this paper are aware. The existence of linear degree hubs also implies that two uniformly random vertices in $T_n$ are at distance two from each other with probability bounded away from zero, while in both the URRT and the PA tree typical distances grow logarithmically \cite{Luc_distance_URRT},\cite[Theorem 8.1]{hofstad_v2}.  As for low-degree vertices, both in PA trees and URRT an asymptotically positive fraction of vertices have degree at least two, but we shall show that a random friend tree of size $n$ has $n-o(n^{0.9})$ leaves. While a URRT of size $n$ has on average $n/2^{k-1}$ vertices of degree at least $k$ for fixed $k$ (\citet{janson2005asymptotic}), for friend trees, asymptotically, this number sits between $n^{0.1}$ and $n^{0.9}$ (see Theorem \ref{thm:SmallDegreeVertices}). The proliferation of hubs and the interaction between neighbours block most leaves from ever growing their degree. Proposition \ref{prop:mortal_leaves} shows that most leaves remain leaves forever\footnote{So one might argue that `random loneliness tree' is in fact a more appropriate name for our model.}, whereas for URRT and PA trees, the degree of every vertex is a.s.\ unbounded.

{\bf Earlier work. }The only previous rigorous result on random friend trees the authors are aware of was obtained by \citet{CanningsRFT}, who show that in the $1$-step friend tree, $n-o(n)$ of the vertices are leaves almost surely. Random friend trees were also studied in the physics literature by \citet{MR3683821}. In that work, the authors use simulations and non-rigorous arguments to study the distribution of size of the largest degrees and the order of growth of the number of non-leaves, and estimate the degree distribution restricted to the bounded-degree vertices. They conjecture that, for any fixed $k$, the number of degree $k$ vertices is of the order of $n^{\mu}$ for $\mu\approx 0.566$. Moreover, they conjecture that among the non-leaf vertices, the proportion of degree $k$ vertices is of the order of $k^{-(1+\mu)}$. We discuss their estimates in Section \ref{sec:questions}.

{\bf Outline.} First, in Section~\ref{sec:Notations} we introduce notation that we use throughout the paper. We then present our main results for friend trees. Our findings can naturally be divided into local and global properties, which we present in Section~\ref{sec:localproperties} and~\ref{sec:globalproperties} respectively. In Sections~\ref{sec:prooflocalproperties} and \ref{sec:proofglobalproperties} we prove our main results. Finally, Section~\ref{sec:questions} contains some open questions about random friend trees.

\begin{figure}[ht]
    \centering
    \includegraphics[scale=0.7]{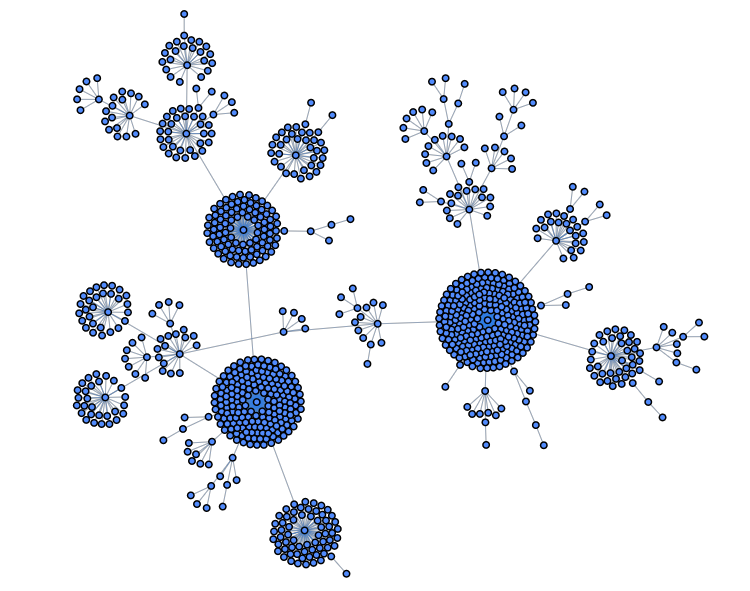}
    \caption{A realisation of $T_n$ with $n=1000$.}
    \label{fig:RednerProcess}
\end{figure}

\section{Notation}\label{sec:Notations}
For a graph $G$ and a vertex $v$ of $G$, write $\cN(v;G)$ for the neighbourhood of $v$ in $G$ and $\cL(v; G)$ for the set of leaf neighbours of $v$ in $G$ (i.e. vertices of degree one in $\cN(v;G)$). In the rest of the paper, $T_n$ denotes a tree of size $n$ obtained from the random friend tree model. The set of vertices of $T_n$ is $[n]:= \{1,\ldots,n\}$, where the label of the vertex is  its time of arrival in the tree. Since every integer $k \in \N$ is a vertex of $T_n$ for all $n \ge k$, we take the liberty of referring to integers of $\mathbb{N}$ as vertices. For $v \in T_n$, let $D_n(v)=|\cN(v;T_n)|$ be the degree and let $L_n(v) = |\cL(v; T_n)|$ be the number of leaf neighbours of $v$ in $T_n$. Define the random variable  
$$Z_v:=\liminf_{n\to\infty} \frac{D_n(v)}{n}~.$$
A vertex $v\in \N$ is called a \emph{hub} if $Z_v>0$, that is, if the degree of $v$ is of linear order asymptotically. A vertex $w$ is said to be a \emph{child} of vertex $v$ in $T_n$ if $w\in \cN(v;T_n)$ and $v<w$. If $w$ is a child of $v$, then $v$ is the \emph{parent} of $w$. For $i,j\in[n]$, denote by $d_n(i,j)$ the graph distance between vertices $i$ and $j$ in $T_n$. We also introduce $\mathrm{Diam}_n:=\max_{i,j\in[n]}d_n(i,j)$, the diameter of $T_n$, and let $M_n:=\max_{i\leq n} \min_{\{\ell: \ D_n(\ell)=1\}}d_n(i,\ell)$ be the maximal distance of any vertex of $T_n$ to its nearest leaf. 
For integers $n,k\ge 1$ we let $X_n^k=\{v \in [n]: D_n(v)=k\}$ be the number of vertices of degree $k$ in $T_n$, and let $X_n^{\ge k}=\sum_{j \ge k} X_n^j$.

For any sequence $(x_n)_{n\geq 1}$, we define, for all $n\geq 1$, $\Delta x_n := x_{n+1} - x_n$. For non-negative $(x_n)_{n\geq1}$ and positive $(y_n)_{n\geq 1}$ we write $x_n=O(y_n)$ and $y_n=\Omega(x_n)$ if $\limsup_{n\to\infty}\tfrac{x_n}{y_n}<\infty$ and we write $x_n=o(y_n)$ and $y_n=\omega(x_n)$ if $\lim_{n\to\infty} \tfrac{x_n}{y_n}=0$. We say that $x_n=\Theta(y_n)$ if $x_n=O(y_n)$ and $x_n=\Omega(y_n)$ both hold. We also use this notation with a $p$ subscript meaning that the property holds in probability. So, for sequences $(X_n)$ and $(Y_n)$ of non-negative random variables, we write $X_n=O_p(Y_n)$ and $Y_n=\Omega_p(X_n)$ if for all $\varepsilon>0$, there exist $M>0$ and $K>0$ such that for all $n\geq M$, $\p{X_n>KY_n }<\varepsilon$. We write $X_n=o_p(Y_n)$ and $Y_n=\omega_p(X_n)$ if for all $\varepsilon>0$, $\delta>0$, there exists $M>0$ such that for all $n\geq M$, $\p{X_n>\delta Y_n }<\varepsilon$. We write  $X_n=\Theta_p(Y_n)$ if both $X_n=O_p(Y_n)$ and $X_n=\Omega_p(Y_n)$.

\section{Local results}\label{sec:localproperties}
In this section we state our results regarding the properties of individual vertices and their close neighbourhoods. First, we state a convergence result for the normalised degree. 
\begin{tcolorbox}
    \begin{thm}[Convergence of normalised degree]\label{thm:DegreeConvergence}
        For vertex $u\in\mathbb{N}$, the random variables $D_n(u)$, $L_n(u)$ and $Z_u$, defined in Section \ref{sec:Notations}, are such that 
        \begin{align*}
            \frac{D_n(u)}{n}\to Z_u  & \text{ and } \frac{L_n(u)}{n}\to Z_u 
        \end{align*}
     almost surely as $n\to \infty$.
    \end{thm}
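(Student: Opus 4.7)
The plan is to recognise $D_n(u)/n$ as a bounded nonnegative supermartingale, deduce its almost sure convergence from the martingale convergence theorem, and then transfer this convergence to $L_n(u)/n$ using the fact, already proved by \citet{CanningsRFT}, that almost all vertices of $T_n$ are leaves.

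The first step is to compute the conditional law of the degree increment at $u$. Since $D_{n+1}(u) - D_n(u) = \mathbf{1}[W_n = u]$, I would decompose on the choice of $V_n$ to obtain
\[
\Cprob{W_n = u}{T_n} \;=\; \frac{1}{n}\sum_{v \in \cN(u; T_n)} \frac{1}{D_n(v)}.
\]
Using the crude bound $1/D_n(v) \leq 1$ together with the fact that the sum has exactly $D_n(u)$ terms, this is at most $D_n(u)/n$, and a short computation then gives
\[
\Cexp{\frac{D_{n+1}(u)}{n+1}}{T_n} \;=\; \frac{D_n(u) + \Cprob{W_n = u}{T_n}}{n+1} \;\leq\; \frac{D_n(u)\,(1 + 1/n)}{n+1} \;=\; \frac{D_n(u)}{n}.
\]
So $(D_n(u)/n)_{n \geq u}$ is a nonnegative supermartingale bounded above by $1$. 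The martingale convergence theorem then yields almost sure convergence; since the limit exists almost surely, it must equal $\liminf_n D_n(u)/n$, which is $Z_u$ by definition.

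To pass from $D_n(u)$ to $L_n(u)$, I will use that every non-leaf neighbour of $u$ in $T_n$ is, in particular, a vertex of degree at least $2$ in $T_n$, so
\[
0 \;\leq\; D_n(u) - L_n(u) \;\leq\; X_n^{\geq 2}.
\]
The result of \citet{CanningsRFT} stated in the introduction gives $X_n^{\geq 2}/n \to 0$ almost surely. Sandwiching
\[
\frac{D_n(u)}{n} - \frac{X_n^{\geq 2}}{n} \;\leq\; \frac{L_n(u)}{n} \;\leq\; \frac{D_n(u)}{n}
\]
and letting $n \to \infty$ will then yield $L_n(u)/n \to Z_u$ almost surely, as claimed.

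The main conceptual obstacle, as I see it, is the temptation to believe that controlling $D_n(u)$ requires tracking the whole neighbourhood: the transition probability explicitly involves the degrees of all neighbours of $u$, coupling the dynamics to the second neighbourhood, which is precisely the difficulty flagged in the introduction. The key simplification is that the extremely crude inequality $\sum_{v \in \cN(u;T_n)} 1/D_n(v) \leq D_n(u)$ is already enough for the supermartingale property, so no fine information about the neighbourhood is needed here; the second neighbourhood enters only implicitly, via the auxiliary input $X_n^{\geq 2} = o(n)$ a.s.\ imported from \citet{CanningsRFT}.
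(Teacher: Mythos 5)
Your proof is correct, and it takes a mildly different but valid route from the paper's. The paper works with $L_n(v)$ (the number of leaf neighbours) and shows that $((L_n(v)-1)/n)_{n\ge m}$ is a \emph{bounded submartingale}, using that leaf neighbours contribute $1$ each to $\sum_{u\in\cN(v;T_n)}1/D_n(u)$ and that at most one leaf neighbour can be lost per step (when $V_n=v$ and $W_n\in\cL(v;T_n)$). You instead observe that $D_n(u)$ is monotone and that the crude bound $\sum_{v\in\cN(u;T_n)}1/D_n(v)\le D_n(u)$ makes $(D_n(u)/n)$ a \emph{bounded supermartingale} --- the monotonicity of $D_n$ means there is only one-sided drift to control, which is the cleaner of the two computations. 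Both approaches then invoke the martingale convergence theorem and finish with the sandwich $L_n(u)\le D_n(u)\le L_n(u)+X_n^{\ge2}$ together with $X_n^{\ge2}/n\to0$ a.s. The only substantive difference in the final step is the source of this last fact: the paper cites its own Theorem~\ref{thm:SmallDegreeVertices} (a strictly stronger polynomial bound, proved later and independently), whereas you cite the earlier result of \citet{CanningsRFT} that $n-o(n)$ vertices are leaves almost surely. Both are legitimate; using the external result has the slight aesthetic advantage of making the dependency structure obviously acyclic without needing to trace the proof of Theorem~\ref{thm:SmallDegreeVertices}. One minor stylistic note: you should say explicitly that your supermartingale is adapted to the filtration $(\sigma(T_n))_{n\ge u}$, as the paper does for its submartingale.
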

\end{tcolorbox}
We observe that $\sum_{i\ge 1}Z_i\le1$ almost surely. Indeed, on the event that this sum exceeds $1$, then there must be $k \in \N$ and $\delta>0$ such that $Z_1+\dots+Z_k=1+\delta$. But this would imply that there is a finite $n$ such that the number of leaves neighbouring vertices $1,\dots,k$ in $T_n$ satisfies $L_n(1)+\dots+L_n(k)\ge (1+\delta/2)n$. However, the number of leaves in $T_n$ is at most $n-1$ deterministically, so this gives a contradiction.  

We conjecture that $\sum_{i\ge 1} Z_1=1$ almost surely, implying that, asymptotically, all but a negligible proportion of the vertices are a leaf next to a hub. We discuss this and other open questions on the law of $(Z_i)_{i\ge 1}$ in Section~\ref{sec:questions}.
    
A striking property of the friend tree concerns the degree of an edge. 

\begin{tcolorbox}
    \begin{thm}[Abundance of hubs]\label{thm:AbundanceHubs}
        The degree of every edge is almost surely asymptotically linear. That is, for any $m\geq 1$, if $W_m$ is the parent of $m+1$, then $Z_{m+1}+Z_{W_m}  >0$ almost surely.
    \end{thm}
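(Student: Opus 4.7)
The plan is to proceed by contradiction: fix $m \ge 1$, set $u = W_m$ and $v = m+1$, and suppose $\p{Z_u = Z_v = 0} > 0$. On the event $\{Z_u = Z_v = 0\}$, Theorem~\ref{thm:DegreeConvergence} yields $D_n(u)/n, D_n(v)/n, L_n(u)/n, L_n(v)/n \to 0$ almost surely.

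The argument is driven by two one-step expectation computations. Since $v \in \cN(u;T_n)$, the probability that the next vertex attaches to $u$ satisfies $\p{W_n = u \mid T_n} = \frac{1}{n}\sum_{w \in \cN(u;T_n)} 1/D_n(w) \ge L_n(u)/n$. Conversely, a leaf neighbour $\ell$ of $u$ loses its leaf status only when $V_n = u$ and $\ell$ is selected among $u$'s neighbours, which has probability $1/(n D_n(u))$. Combining and using $L_n(u) \le D_n(u)$,
\[
\Cexp{L_{n+1}(u) - L_n(u)}{T_n} \;\ge\; \frac{L_n(u)}{n} - \frac{L_n(u)}{n D_n(u)} \;\ge\; \frac{L_n(u) - 1}{n}.
\]
Adding the symmetric inequality for $v$ and writing $R_n := L_n(u) + L_n(v)$ gives $\Cexp{R_{n+1} - R_n}{T_n} \ge (R_n - 2)/n$, so that $X_n := (R_n-2)/n$ is a bounded submartingale; by Theorem~\ref{thm:DegreeConvergence} its almost sure limit equals $Z_u + Z_v$.

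To close the argument, I would apply Doob's $L^2$ inequality to the martingale part $M_n$ of $X_n$: since $|\Delta X_n| \le 3/n$, the total quadratic variation from $n_0$ onwards is $O(1/n_0)$, so $\sup_{n \ge n_0}|M_n - M_{n_0}| = O(1/\sqrt{n_0})$ with probability at least $1-\delta$. On this event, $X_n \ge X_{n_0} - C/\sqrt{n_0}$ for every $n \ge n_0$. It then suffices to show that for some deterministic $n_0$, $R_{n_0}$ exceeds $2 + 2C\sqrt{n_0}$ with probability bounded away from $0$; the preliminary submartingale inequality $\Cexp{\Delta R_n}{T_n} \ge R_n/(2n)$ (valid whenever $D_n(u), D_n(v) \ge 2$, which holds eventually a.s.\ by a Borel--Cantelli argument on the edge-feed $\Cexp{\Delta D_n(u)}{T_n} \ge 1/(n D_n(v))$) gives $\e[R_n] = \Omega(\sqrt n)$, and this can be upgraded to an $\Omega_p(\sqrt n)$ lower bound by another concentration step. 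Thus $Z_u + Z_v > 0$ with probability tending to $1$ as $n_0 \to \infty$, contradicting $\p{Z_u + Z_v = 0} > 0$.

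The main obstacle is setting up the bootstrap cleanly: the first submartingale inequality only gives $\sqrt n$ growth of $\e[R_n]$, which has to be converted into an almost sure lower bound on $R_{n_0}/\sqrt{n_0}$ before the Doob control on the rescaled submartingale $X_n$ can be deployed. A delicate point is that the concentration inequality must be applied to a sum of martingale increments with non-identical scales $|\Delta X_k| \le 3/k$, and it is essential that these sum to a fluctuation of order $1/\sqrt{n_0}$ (summable squared increments) rather than $\sqrt n / n$ (unconditional Hoeffding), which is precisely what allows the positive drift in the submartingale to dominate the noise and pin down $Z_u + Z_v > 0$ almost surely.
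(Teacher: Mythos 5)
Your submartingale derivation is correct, and the observation that $X_n := (R_n-2)/n$ with $R_n := L_n(u)+L_n(v)$ is a bounded submartingale is exactly the computation the paper carries out for Theorem~\ref{thm:DegreeConvergence}. The paper's proof of Theorem~\ref{thm:AbundanceHubs}, however, goes a very different route: it shows $D_n(u)+D_n(v)\to\infty$, that $L_n(u)+L_n(v)\ge 10$ infinitely often, and then couples $R_n$ to the black balls of a P\'olya urn, restarting the coupling whenever it fails, with a uniformly positive success probability at each attempt. That structure is what delivers \emph{linear} growth.

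Your plan replaces the P\'olya coupling with a Doob $L^2$ estimate, and this is where the argument has a genuine gap. The Doob bound says that for any $\delta>0$ there is $C(\delta)$ (with $C(\delta)\sim\delta^{-1/2}$) such that $\p{\sup_{n\ge n_0}|M_n-M_{n_0}|\ge C(\delta)/\sqrt{n_0}}\le\delta$; on the complementary event $X_n\ge X_{n_0}-C(\delta)/\sqrt{n_0}$ for all $n\ge n_0$, hence $Z_u+Z_v>0$ provided $R_{n_0}>2+C(\delta)\sqrt{n_0}$. Since $\delta$ must be sent to $0$ to conclude a probability-one statement, $C(\delta)\to\infty$, and what you actually need is $\p{R_{n_0}\ge C\sqrt{n_0}}\to 1$ as $n_0\to\infty$ \emph{for every fixed} $C$ — that is, $R_{n}/\sqrt{n}\to\infty$ in probability. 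The expectation estimate $\e{R_n}=\Omega(\sqrt n)$ does not give this: since $R_n\le 2n$, Markov only yields $\p{R_n\ge\eta\sqrt n}\gtrsim 1/\sqrt n$, and even a second-moment/Paley--Zygmund argument at best gives a probability bounded away from $0$, not from $1$. The deeper obstruction is that the noise in $M_n$ after time $n_0$ and the ``signal'' $X_{n_0}$ are both of order $1/\sqrt{n_0}$ under the hypothesis that $\e{R_{n_0}}\asymp\sqrt{n_0}$, so the drift cannot dominate the fluctuations at this scale; concluding $R_n/\sqrt n\to\infty$ (in probability or a.s.) is essentially equivalent to the statement of the theorem, and the proposal is circular at this point. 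The paper sidesteps this by working at the \emph{linear} scale via the P\'olya urn, whose concentration is for $B_n/n$, not $B_n/\sqrt n$, and by iterating restart attempts with a fixed success probability rather than sending a single failure probability to zero.

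A smaller but nonfixable-as-stated error: you assert that ``$D_n(u),D_n(v)\ge 2$ holds eventually a.s.\ by a Borel--Cantelli argument.'' This is false for $v=m+1$: by Theorem~\ref{thm:StationaryDegree} (and Proposition~\ref{prop:mortal_leaves}), $v$ remains a leaf forever with positive probability, so $D_n(v)=1$ for all $n$ on a non-null event. Fortuitously your inequality $\Cexp{\Delta R_n}{T_n}\ge R_n/(2n)$ still holds using only $D_n(u)\ge 2$ (which holds deterministically for $n\ge m+1$), because when $D_n(v)=1$ we have $L_n(v)=0$, so $R_n=L_n(u)$ and the $v$-terms only help. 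You should make that case distinction explicit rather than appeal to a Borel--Cantelli argument for $D_n(v)\ge 2$, which is simply not true.
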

\end{tcolorbox}

We also study the limit of the proportion of vertices that are adjacent to $W_n$. This theorem shows that the mean of the empirical law of $D_n(W_n)/n$ given $T_n$ converges almost surely, implying that this global property of $T_n$ ``stabilizes" as $n$ grows large. We conjecture that, in fact, the empirical law itself converges almost surely to $\sum_{i \ge 0} Z_i\delta_{Z_i}$ with respect to the Prokhorov topology. This is in fact equivalent to the conjecture that $\sum_{i \ge 1} Z_i = 1$ almost surely.

\begin{tcolorbox}
    \begin{thm}[Expected degree of $W_n$]\label{thm:deg_W_n}
        As $n\to \infty$, $\tfrac{1}{n}\E{D_n(W_n)|T_n}$ has a positive almost sure limit. Moreover, $\tfrac{1}{n}\E{D_n(W_n)}$ converges to some positive number as $n\to\infty$.   \end{thm}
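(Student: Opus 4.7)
The plan is to rewrite $\E{D_n(W_n)|T_n}$ in closed form, extract its dominant contribution using Theorem~\ref{thm:DegreeConvergence} together with the fact \citep{CanningsRFT} that the number $N_n$ of non-leaves of $T_n$ is $o(n)$ a.s., and apply Fatou's lemma for the lower bound. Conditioning on $V_n$ uniform in $[n]$ and $W_n$ uniform in $\cN(V_n;T_n)$ yields
\[
\E{D_n(W_n)|T_n}=\frac{1}{n}\sum_{u\in[n]}\frac{1}{D_n(u)}\sum_{v\in\cN(u;T_n)}D_n(v).
\]
I would split the outer sum according to whether $u$ is a leaf. For each leaf $u$ the inner sum equals the degree of its (necessarily non-leaf, for $n\ge 3$) parent, so regrouping by parent makes the leaf contribution equal to $\sum_v L_n(v)D_n(v)$. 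For each non-leaf $u$, the inner sum is at most $2(n-1)$ while $D_n(u)\ge 2$, so each such term is at most $n-1$; summing over the $N_n$ non-leaves bounds this piece by $(n-1)N_n=o(n^2)$ a.s. Dividing by $n$ gives $\frac{1}{n}\E{D_n(W_n)|T_n}=\frac{1}{n^2}\sum_v L_n(v)D_n(v)+o(1)$ a.s.

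Next I would replace $D_n$ by $L_n$ inside the sum. Since $D_n(v)-L_n(v)$ counts the non-leaf neighbours of $v$ and is thus at most $N_n$, and since $\sum_v L_n(v)\le n$, one has
\[
\sum_v L_n(v)\bigl(D_n(v)-L_n(v)\bigr)\le N_n\cdot n = o(n^2)\quad\text{a.s.},
\]
so that $\frac{1}{n}\E{D_n(W_n)|T_n}=\frac{1}{n^2}\sum_v L_n(v)^2+o(1)$ a.s. By Theorem~\ref{thm:DegreeConvergence}, $L_n(v)/n\to Z_v$ a.s.\ for each fixed $v$, and Fatou's lemma gives $\liminf_{n\to\infty}\frac{1}{n^2}\sum_v L_n(v)^2\ge\sum_v Z_v^2$ a.s. Applying Theorem~\ref{thm:AbundanceHubs} to the edge $\{1,2\}$ yields $Z_1+Z_2>0$ a.s., so this liminf is strictly positive.

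The main obstacle is the matching a.s.\ upper bound. The natural strategy is a uniform tail estimate: for every $\eps>0$ there exists an a.s.\ finite random $K$ such that $\limsup_{n\to\infty}\max_{v>K}L_n(v)/n\le\eps$. Given such an estimate, combining with the a priori bound $\sum_{v>K}L_n(v)/n\le 1$ yields $\limsup_{n\to\infty}\sum_{v>K}L_n(v)^2/n^2\le\eps$, and letting $K\to\infty$ (while invoking the pointwise limit for the finite complementary sum) gives $\limsup_{n\to\infty}\frac{1}{n^2}\sum_v L_n(v)^2\le\sum_v Z_v^2$ a.s. The content of the tail claim is that late-born vertices cannot retain a macroscopic fraction of leaves for \emph{any} $n$; I would establish it by refining the increment analysis behind Theorem~\ref{thm:DegreeConvergence} to bound the probability that a vertex born at time $v$ ever attains degree exceeding $\eps n$ by a quantity summable in $v$, and this uniform-in-$v$ control is the most delicate step. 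Finally, since $\frac{1}{n}\E{D_n(W_n)|T_n}\in[0,1]$, bounded convergence transfers the a.s.\ convergence to the unconditional expectation, giving convergence of $\frac{1}{n}\E{D_n(W_n)}$ to the same positive limit.
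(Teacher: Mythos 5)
Your decomposition is correct as far as it goes: conditioning gives
\[
\E{D_n(W_n)|T_n}=\frac{1}{n}\sum_{u\in[n]}\frac{1}{D_n(u)}\sum_{v\in\cN(u;T_n)}D_n(v),
\]
the leaf terms regroup to $\sum_v L_n(v)D_n(v)$, the non-leaf terms are $O(nN_n)=o(n^2)$, and $\sum_v L_n(v)(D_n(v)-L_n(v))\le nN_n=o(n^2)$, so $\frac{1}{n}\E{D_n(W_n)|T_n}=\frac{1}{n^2}\sum_v L_n(v)^2+o(1)$ a.s. The Fatou step and the invocation of Theorem~\ref{thm:AbundanceHubs} for the edge $\{1,2\}$ then correctly give $\liminf_n\frac{1}{n}\E{D_n(W_n)|T_n}\ge\sum_v Z_v^2\ge Z_1^2+Z_2^2>0$ a.s. This identifies what the limit \emph{should} be, which is a nice bonus the paper does not provide.

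However, there is a genuine gap: you never establish that the a.s.\ limit exists, only that the $\liminf$ is positive, and the theorem asserts a limit. Everything hangs on the uniform tail estimate — for every $\eps>0$ there is an a.s.\ finite $K$ with $\limsup_n\max_{v>K}L_n(v)/n\le\eps$ — which you flag as ``the most delicate step'' and only outline. The pointwise a.s.\ convergence $L_n(v)/n\to Z_v$ from Theorem~\ref{thm:DegreeConvergence} does not control the transient behaviour of late-born vertices uniformly over $v$; making the ``probability that vertex $v$ ever attains $\eps n$ leaf neighbours is summable in $v$'' claim rigorous would require a quantitative concentration argument beyond what the submartingale proof of Theorem~\ref{thm:DegreeConvergence} delivers. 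Without it, your argument gives only $\liminf>0$, which does not prove the stated theorem. The paper avoids this entirely by a different route: it shows directly that $Y_n/(n+1)-\sum_{i\le n}1/i^2$, with $Y_n=\E{D_n(W_n)|T_n}$, is a bounded supermartingale, and reads off a.s.\ convergence from martingale convergence; positivity is then extracted from the convergence $L_n(i)/n\to Z_i$ for $i\in\{1,2\}$ in much the way you do. If you can close the uniform tail estimate, your approach is a strictly stronger result; otherwise, the supermartingale route is the one that actually completes the proof.
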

\end{tcolorbox}

Another notable property of random friend trees concerns the probability of a vertex having bounded degree.

\begin{tcolorbox}
    \begin{thm}[Bounded degree]\label{thm:StationaryDegree}
       Let $k$ be a positive integer. Fix $v\in\mathbb{N}$. Then, for all $n\geq v$,
    \[\probC{D_{n+j}(v) = k \ \forall j\geq 0}{T_n} > c_k\I{D_n(v)=k},\]
    where the constant $c_k>0$ only depends on $k$.
    \end{thm}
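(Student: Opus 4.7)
Write $u_1,\dots,u_k$ for the neighbours of $v$ at time $n$. Conditional on $T_m$ and on $v$ retaining these $k$ neighbours from time $n$ through time $m$, the probability that $v$ gains a new neighbour at step $m+1$ equals
\[
q_m \;:=\; \frac{1}{m}\sum_{i=1}^{k}\frac{1}{D_m(u_i)},
\]
since the event requires $V_m=u_i$ for some $i$ (probability $1/m$) and $u_i$ picking $v$ among its $D_m(u_i)$ neighbours. By the conditional independence of the pairs $(V_j,W_j)$,
\[
\probC{D_{n+j}(v)=k\ \forall j\ge 0}{T_n} \;=\; \Cexp{\prod_{m\ge n}(1-q_m)}{T_n},
\]
with the product evaluated along the success trajectory. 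It thus suffices to produce, uniformly in $T_n$ with $D_n(v)=k$, an event of probability at least $c_k$ on which $\sum_{m\ge n}q_m$ is bounded by a constant depending only on $k$.

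The engine is a Yule-type self-reinforcement of the neighbours $u_i$. On the success trajectory, $D_m(u_i)$ increases at step $m+1$ precisely when $W_m=u_i$, which has conditional probability at least $L_m(u_i)/m + 1/(km)$: each leaf child of $u_i$ deterministically forces $W_m=u_i$ when chosen as $V_m$, and there is a persistent $1/(km)$ contribution from the channel $V_m=v$, $W_m=u_i$. A short balance calculation shows that once $D_m(u_i)\ge 2$, the drift of $L_m(u_i)$ also exceeds $L_m(u_i)/(2m) + 1/(km)$, since losses from existing leaves maturing are dominated by gains from new leaves. The associated ODE $\dot f = f/(2m) + 1/(km)$ has solutions of order $m^{1/2}/k$, and Freedman-type concentration then produces $D_m(u_i)\ge L_m(u_i)\gtrsim m^{1/2}/k$ for every $i$ with high probability.

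The argument proceeds in two phases. In Phase~I, on an initial window $[n,2^J n]$ for $J=J(k)$ chosen large enough, the worst-case bound $q_m\le k/m$ gives
$\probC{\text{no gain in Phase I}}{T_n} \ge \prod_{m=n}^{2^J n}(1-k/m) \ge 2^{-2kJ}$.
On this event, by the reinforcement above and martingale concentration, $D_{2^J n}(u_i)\ge c(2^J n)^{1/2}/k$ for every $i$ with probability at least $1-\delta_J$, where $\delta_J\to 0$ as $J\to\infty$. In Phase~II (for $m\ge 2^J n$), the polynomial lower bound on $D_m(u_i)$ is maintained by a bootstrap induction on doubling scales whose failure probabilities are summable, and thus $\sum_{m\ge 2^J n}q_m \le k^2/\bigl(c(2^J n)^{1/2}\bigr)$ can be made smaller than $\log 2$ by taking $J$ large, so $\prod_{m\ge 2^J n}(1-q_m)\ge 1/2$ on the good event. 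Choosing $J=J(k)$ to make all constants explicit yields $c_k>0$ depending only on $k$.

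The main technical obstacle is the joint control of ``no gain'' and ``growth of $D_m(u_i)$'' during Phase~I, since the Yule rates driving the growth of $L_m(u_i)$ presuppose the no-gain event. This can be resolved through a barrier coupling that halts the dynamics at the first violation of either clause, combined with the observation that the barrier is itself unlikely to be reached under the induction hypothesis.
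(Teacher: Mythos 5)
Your plan is essentially the paper's approach: fix the neighbours $u_1,\dots,u_k$, write the conditional gain probability $q_m=\tfrac1m\sum_i 1/D_m(u_i)$, and suppress $\sum_m q_m$ by growing the $D_m(u_i)$ through leaf reinforcement on doubling scales. Your Phase~II corresponds to the paper's bad events $E_j,F_j,G_j$ on scales $a_j=\tau 2^j$ with growth thresholds $\ell_j=N(5/4)^j$.

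The genuine gap is exactly the one you flag and then hand-wave past: establishing degree growth \emph{conditionally on} the no-gain event. Conditioning on no-gain is a Doob transform that changes the law of $W_m$ on $\{V_m=u_i\}$; for $k\ge 2$ it in fact \emph{increases} the probability that $W_m$ is a leaf neighbour of $u_i$ (from $L_m(u_i)/D_m(u_i)$ to $L_m(u_i)/(D_m(u_i)-1)$), which pushes back the onset of P\'olya reinforcement to $D_m(u_i)\ge 3$, so your drift bound ``$\ge L_m(u_i)/(2m)+1/(km)$ once $D_m(u_i)\ge 2$'' only holds for the unconditioned process. The ``barrier coupling'' is invoked but not described, and without it Phase~I is not a proof. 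The paper sidesteps this issue: it conditions instead on the event $A_N$ (for $k=1$, that the first $N$ visits to $\{v,w\}$ all have $V_i=v$), which has probability $\ge 2^{-N}$ and \emph{deterministically} yields $L_\tau(w)\ge N$ and $D_\tau(v)=1$, so there is simply no conditional-growth analysis to do; the subsequent doubling-scale estimates $\p{E_j\mid\cdots}\lesssim 1/\ell_j$, $\p{F_j\mid\cdots}\le e^{-m(j+1)/3}$, $\p{G_j\mid\cdots}\le e^{-\ell_j/90}$ are straightforward Chernoff bounds. Separately, your claimed bound $D_{2^Jn}(u_i)\ge c(2^Jn)^{1/2}/k$ with $c$ independent of $n$ cannot be right: your own ODE started from $f(n)=O(1)$ gives $f(m)\asymp(m/n)^{1/2}$, so the correct scale is $(m/n)^{1/2}/k$. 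The $n$-dependence happens to cancel in the tail sum, yielding $\sum_{m\ge 2^Jn}q_m\lesssim k^2/2^{J/2}$, so the conclusion that $J=J(k)$ suffices survives, but the bound as written is false.
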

\end{tcolorbox}
\noindent  In particular, Theorem \ref{thm:StationaryDegree} implies that it is impossible to `diagnose' which vertices are the hubs, even at a very large time. Indeed, every vertex, no matter how large its degree is, has probability bounded away from zero to never acquire any new neighbours.
We prove Theorems~\ref{thm:DegreeConvergence} and \ref{thm:AbundanceHubs} in Section~\ref{sec:Hubs}, the proof of Theorem~\ref{thm:deg_W_n} can be found in Section~\ref{sec:deg_W_n} and  Theorem~\ref{thm:StationaryDegree} is proven in Section~\ref{sec:EternalLEaves}.

\section{Global results}\label{sec:globalproperties}
We now present our results on the global properties of random friend trees.

\begin{tcolorbox}
\begin{thm}[Typical distances]
\label{thm:typical}
    For $U_n$ and $V_n$ two uniformly random vertices in $T_n$, it holds that the distance between $U_n$ and $V_n$ is equal to $2$ with probability bounded away from zero. Moreover, the distance between vertex $1$ and $U_n$ is at most $2$ with probability bounded away from $0$.
\end{thm}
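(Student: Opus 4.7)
The plan is to leverage Theorem~\ref{thm:AbundanceHubs} applied to the initial edge $\{1,2\}$ (the case $m=1$, with $W_1=1$), which yields $Z_1 + Z_2 > 0$ almost surely. Combined with the almost sure convergence $L_n(v)/n \to Z_v$ from Theorem~\ref{thm:DegreeConvergence}, this already furnishes a linear-sized reservoir of leaves adjacent to $\{1,2\}$, and hence of vertices at distance at most $2$ from vertex~$1$. Both statements then follow by bounding a conditional probability given $T_n$, passing to the almost sure limit termwise, and invoking Fatou's lemma.

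For the second statement, the closed $2$-neighbourhood of vertex~$1$ in $T_n$ contains $\cL(1;T_n) \cup \cL(2;T_n)$ as a disjoint subset: a leaf neighbour of $1$ lies at distance $1$ from $1$, a leaf neighbour of $2$ lies at distance $2$ from $1$ via the edge $\{1,2\}$, and the two sets are disjoint because a leaf has a unique neighbour. This gives
\[
\probC{d_n(1,U_n) \le 2}{T_n} \;\ge\; \frac{L_n(1)+L_n(2)}{n},
\]
and the right-hand side converges almost surely to $Z_1 + Z_2$ by Theorem~\ref{thm:DegreeConvergence}. Fatou's lemma then yields
\[
\liminf_{n\to\infty} \p{d_n(1,U_n)\le 2} \;\ge\; \E{Z_1+Z_2} \;>\; 0,
\]
with strict positivity following from $Z_1+Z_2>0$ almost surely.

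For the first statement, the plan is to lower bound the event $\{d_n(U_n,V_n)=2\}$ by the event that $U_n$ and $V_n$ are two distinct leaves sharing a common neighbour; any such pair is automatically at graph distance exactly $2$. Summing over the possible common neighbour $w$ yields
\[
\probC{d_n(U_n,V_n)=2}{T_n} \;\ge\; \frac{1}{n^2}\sum_{w\in[n]} L_n(w)\bigl(L_n(w)-1\bigr).
\]
Retaining only the contributions from $w\in\{1,2\}$ and using Theorem~\ref{thm:DegreeConvergence}, the right-hand side has almost sure $\liminf$ at least $\max(Z_1,Z_2)^2$, which is strictly positive almost surely. A second application of Fatou's lemma then gives
\[
\liminf_{n\to\infty} \p{d_n(U_n,V_n)=2} \;\ge\; \E{\max(Z_1,Z_2)^2} \;>\; 0.
\]

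The argument is short and no serious technical obstacle is anticipated, since all of the work is already absorbed into Theorems~\ref{thm:AbundanceHubs} and~\ref{thm:DegreeConvergence}. The only conceptual point is to realise that one need not track the global hub structure of $T_n$ at all: anchoring the estimates on the deterministic starter edge $\{1,2\}$ is enough, because Theorem~\ref{thm:AbundanceHubs} guarantees a hub at one of its endpoints and that hub captures a linear fraction of the vertices of $T_n$ as leaves.
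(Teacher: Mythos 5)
Your proposal is correct and takes essentially the same approach as the paper: both arguments anchor on the starter edge $\{1,2\}$, invoke Theorem~\ref{thm:AbundanceHubs} (giving $Z_1+Z_2>0$ a.s.) together with the convergence $L_n(v)/n\to Z_v$ from Theorem~\ref{thm:DegreeConvergence}, and then convert a linear reservoir of leaves around $\{1,2\}$ into a uniformly positive probability that the random samples land in it. The only cosmetic difference is that the paper picks a single index $i\in\{1,2\}$ and a uniform $\epsilon$ with $\p{Z_i>\epsilon}>\epsilon$ and works with finite-$n$ probability bounds directly, whereas you condition on $T_n$ and pass to the limit with Fatou; your choice to require $U_n,V_n$ to be two \emph{distinct leaves} sharing a neighbour also cleanly forces the distance to be exactly $2$, a detail the paper glosses over by a negligible $O(1/n)$.
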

\end{tcolorbox}

The previous result sets the random friend tree apart from the ``universality class" of logarithmic trees (and in particular from the uniform random recursive tree and preferential attachment trees), in which typical distances are logarithmic.

The next result shows that, while distances between typical vertices can be very small, the diameter of random friend trees is indeed logarithmic. 

\begin{tcolorbox}
    \begin{thm}[Diameter of random friend trees]\label{thm:Diameter}
        Almost surely
        $$1\leq \liminf_n \frac{\mathrm{Diam}_n}{\log(n)} \leq \limsup_n \frac{\mathrm{Diam}_n}{\log(n)} \leq 4e~. $$
    \end{thm}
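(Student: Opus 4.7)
The plan is to bound the $\limsup$ and $\liminf$ separately, via a coupling for the upper bound and a spine-growth analysis for the more delicate lower bound. For the upper bound, root $T_n$ at $1$, write $H(v) := d_n(1,v)$, and set $h_n := \max_v H(v)$; since $\mathrm{Diam}_n \le 2 h_n$, it suffices to show $\limsup h_n/\log n \le 2e$ a.s. The parent of $n+1$ is $W_n \in \mathcal{N}(V_n;T_n)$, so $|H(W_n) - H(V_n)| = 1$, hence $H(n+1) \le H(V_n) + 2$. On the same probability space, construct a URRT $\tilde T$ driven by the same sequence $(V_n)$: in $\tilde T$, vertex $n+1$ attaches directly to $V_n$. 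A direct induction then shows $H(v) \le 2 \tilde H(v)$ for every vertex $v$, since $H(n+1) \le H(V_n)+2 \le 2\tilde H(V_n) + 2 = 2\tilde H(n+1)$; consequently $h_n \le 2 \tilde h_n$, and Pittel's theorem \cite{Pittel_height_trees} gives $\tilde h_n/\log n \to e$ a.s., yielding $\limsup \mathrm{Diam}_n/\log n \le 4e$ a.s.

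For the lower bound, since $h_n \le \mathrm{Diam}_n$, it suffices to prove $\liminf h_n/\log n \ge 1$ a.s. The plan is to construct a spine $\pi_0 := 1, \pi_1, \pi_2, \ldots$ with $\pi_k$ at depth $k$, and to estimate the waiting times $\tau_k$ at which $\pi_k$ enters the tree. The key input is
\[
\mathbb{P}\{W_m = v \mid T_m\} \;=\; \frac{1}{m}\sum_{u \in \mathcal{N}(v;T_m)} \frac{1}{D_m(u)} \;\ge\; \frac{L_m(v)}{m},
\]
so once $\pi_k$ has at least one leaf-neighbour, the extension probability at step $m$ is at least $1/m$. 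A waiting-time estimate then gives $\tau_k \le e^{k(1+o(1))}$ with high probability; inverting yields a spine of length at least $(1-o(1))\log n$ at time $n$. A Borel--Cantelli argument along a sparse subsequence, combined with the monotonicity of $\mathrm{Diam}_n$, upgrades this in-probability statement to an almost-sure bound.

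The main obstacle is extending the spine past a hub: if the parent $\pi_{k-1}$ already has $D_m(\pi_{k-1}) = \Theta(m)$, then using the edge $\pi_{k-1}\pi_k$ alone gives an extension probability of only $O(1/m^2)$, which fails to sum to infinity. The fix is to select each $\pi_k$ so that $\pi_k$ itself accumulates leaf-neighbours quickly, driving the extension probability through $L_m(\pi_k)$ rather than through $1/D_m(\pi_{k-1})$. That such a choice is possible relies on the local structure furnished by Theorem~\ref{thm:AbundanceHubs}, together with the abundance of eternally-stuck leaves given by Proposition~\ref{prop:mortal_leaves}.
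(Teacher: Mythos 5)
Your upper bound is essentially the paper's argument (Lemmas \ref{lem:coupling_URRT_1} and \ref{lem:lower_bound_diameter}): you couple to a URRT driven by the same sequence $(V_n)$ and exploit $H(n+1)\le H(V_n)+2$, concluding $\mathrm{Diam}_n \le 4\max_{i\le n} d'_n(1,i)$ and invoking the a.s.\ height of the URRT. This part is correct and only cosmetically rephrased.

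The lower bound is a genuinely different route, but the gap you flag is not one you can close with the tools you name. In a fixed spine, once the tip $\pi_k$ is a leaf whose only neighbour $\pi_{k-1}$ is a hub (degree $\Theta(m)$), the extension probability $\p{W_m=\pi_k\mid T_m}=1/(m D_m(\pi_{k-1}))=O(1/m^2)$ is summable, so $\pi_k$ remains a leaf forever with positive probability; this is precisely what Proposition~\ref{prop:mortal_leaves} says, which works \emph{against} you here, not for you. The proposed fix --- ensure $\pi_k$ accumulates leaf-neighbours --- forces $\pi_k$ to be a hub, which merely pushes the problem one edge down the spine: by Theorem~\ref{thm:AbundanceHubs}, every spine edge has a hub endpoint, so the tip is always either a hub or a leaf of a hub, and in the latter case it is stuck with positive probability. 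A fixed spine therefore cannot produce an a.s.\ logarithmic path. The paper's proof of Lemma~\ref{lem:lowerbound_diam} sidesteps this entirely by not fixing a spine: at each time $n$ it takes the \emph{current} diameter path $(i_0,\ldots,i_{\mathrm{Diam}_n})$ and observes that the penultimate vertex $i_1$ has at most one non-leaf neighbour (else the path would not be maximal), so conditional on $V_n=i_1$ (probability exactly $1/n$, unaffected by the degree of $i_1$), $W_n$ lands on a leaf of $i_1$ with probability at least $1/2$ and the diameter increases. This gives $\Cexp{\Delta\mathrm{Diam}_n}{T_n}\ge 1/n$ once $\mathrm{Diam}_n\ge 3$, and a Bernoulli coupling yields $\liminf_n \mathrm{Diam}_n/\log n\ge 1$ a.s., with no appeal to Theorem~\ref{thm:AbundanceHubs} or Proposition~\ref{prop:mortal_leaves}. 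If you want a constructive path argument, you would need to abandon the single fixed spine (e.g.\ track the maximal path dynamically as the paper does, or grow many candidate tips simultaneously), and it is unclear that a spine-by-spine analysis recovers the constant~$1$.
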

\end{tcolorbox}
In particular, this means that, asymptotically, a path of arbitrary length is present in the tree. Since for every edge at least one of its endpoints is a hub, this implies that an asymptotically unbounded number of hubs are present. We strengthen this statement in Theorem~\ref{thm:numberofhubs}, where we prove an almost sure polynomial lower bound for the number of hubs in $T_n$.

We also study the leaf-depth in $T_n$. The next result implies that, although each vertex is at distance at most $1$ from a hub, and will therefore eventually be at distance at most two from the nearest leaf, at fixed times there are still exceptional locations in the graph where the nearest leaf is much further away.  
\begin{tcolorbox}
    \begin{thm}[Leaf depth]\label{thm:LeafDepth}
        Let $M_n$ be the maximal distance of any vertex to its nearest leaf in $T_n$. Then 
        $$ M_n=\Theta\left( \frac{\log(n)}{\log\log(n)} \right)~\text{in probability}. $$
    \end{thm}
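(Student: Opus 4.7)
Write $g_n := \log n/\log\log n$. I would prove the two bounds separately.

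For the lower bound $M_n = \Omega_p(g_n)$, the plan is to produce, with high probability, an induced path $v_0, v_1, \ldots, v_{2k}$ of length $2k = 2\lfloor c g_n \rfloor$ in $T_n$ whose interior vertices $v_1, \ldots, v_{2k-1}$ all have degree exactly $2$; the midpoint $v_k$ would then be at graph-distance at least $k$ from every leaf of $T_n$, since any path from $v_k$ to a leaf must traverse the entire degree-$2$ chain before it can branch off at $v_0$ or $v_{2k}$. I would choose times $s_0 < s_1 < \cdots < s_{2k}$ with appropriate spacing, take $v_i$ to be the vertex born at time $s_i$, and estimate the probability of the event that, for each $i \ge 1$, the vertex born at time $s_i$ attaches to $v_{i-1}$, and that for each $1 \le i \le 2k-1$, $v_i$ acquires exactly one child in $T_n$ (namely $v_{i+1}$, at time $s_{i+1}$). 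Conditionally on the successfully built prefix, the next attachment has probability of order $1/s_i$, because $v_{i-1}$ is a leaf of $T_{s_i - 1}$ whose sole neighbour has degree at most $2$. The ``no further children'' clauses are handled by Theorem~\ref{thm:StationaryDegree} together with elementary estimates on the attachment rate to $v_i$ over the window $(s_i, s_{i+1}]$. A second-moment argument over many roughly-independent starting configurations would then upgrade positive probability to probability tending to $1$.

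For the upper bound $M_n = O_p(g_n)$, the plan is a union bound over $v \in [n]$: for every $v$, the distance from $v$ to its nearest leaf in $T_n$ should exceed $Cg_n$ with probability $o(1/n)$. Root $T_n$ at vertex $1$, let $v_0 := v$, and inductively let $v_{s+1}$ be the most recently born child of $v_s$. Writing $\tau_s$ for the birth time of $v_s$, the central estimate is that, uniformly over $v_s$,
\[
\Cprob{n - \tau_{s+1} \le (n - \tau_s)/\log n}{v_s} \ge c_1 > 0.
\]
A Chernoff bound would then show that at least $Cg_n/2$ out of $Cg_n$ shrinkages succeed with probability $1 - o(1/n^2)$, giving $\tau_{Cg_n} = n$ and hence $v_{Cg_n}$ is the vertex arriving at time $n$, which is necessarily a leaf of $T_n$. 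The shrinkage estimate itself relies on tracking the rate of attachments to the subtree $T_n(v_s)$ hanging below $v_s$: one shows the latest attachment to $T_n(v_s)$ is concentrated near time $n$, and that a constant fraction of such attachments land at $v_s$ directly rather than deeper in its subtree.

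The main obstacle is the shrinkage estimate in the upper bound. Unlike the URRT, where subtree sizes satisfy a clean recursive distributional identity, the random friend tree's subtree growth is governed by a quantity of the form $\sum_{u \in T_n(v_s)} Y_u(m)/m$ with $Y_u(m) := \sum_{w \in \cN(u; T_m)} 1/D_m(w)$, which is sensitive to the full degree profile of neighbouring vertices; a uniform lower bound on the shrinkage probability will likely require a delicate martingale or stochastic-domination argument, perhaps using the convergence results (Theorems~\ref{thm:DegreeConvergence} and~\ref{thm:AbundanceHubs}) to control the contribution of hub-neighbours. The lower bound's second-moment computation also requires care, since distinct path-configurations share attachment events and one must verify that covariances remain dominated by the squared mean.
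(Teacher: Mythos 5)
Your proposal takes a genuinely different route from the paper in both directions, and in both directions there are gaps that you have not resolved.

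For the upper bound, the paper does not attack the RFT directly at all: the key insight (Lemma~\ref{lem:coupling_URRT_1} and Lemma~\ref{lem:coupling_URRT}) is a coupling between the RFT and the URRT under which RFT distances are at most twice URRT distances \emph{and} every leaf of the URRT lies within distance $1$ of an RFT leaf. This reduces the RFT upper bound to the corresponding URRT statement, where the clean $1/\ell$ tail for the size of the youngest-child subtree (equation~\eqref{eq:size_youngest_subtree}) immediately yields $\p{|\cP_n(v)|\geq k}\le 1/k!$ and a union bound finishes. Your proposed direct attack needs a uniform shrinkage estimate $\probC{n-\tau_{s+1}\le (n-\tau_s)/\log n}{v_s}\ge c_1$ for the RFT, and you rightly flag this as the main obstacle: the attachment rate to the subtree of $v_s$ depends on the degree profile of the vertices \emph{adjacent} to that subtree, so there is no clean distributional identity to exploit, and neither Theorem~\ref{thm:DegreeConvergence} nor Theorem~\ref{thm:AbundanceHubs} gives uniform-in-$v_s$ control at the $o(1/n)$ error scale needed for a union bound. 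As written this is a genuine gap, and the coupling to the URRT is precisely the device that makes it unnecessary.

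For the lower bound, the difficulty is in your constraint that the vertex born \emph{at} time $s_i$ attaches to $v_{i-1}$. For a chain of length $2k=\Theta(\log n/\log\log n)$ with $s_i\in[n/2,n]$, the probability of that specific sequence of attachments is $\prod_i\Theta(1/s_i)=n^{-\Theta(k)}$, which cannot be rescued by a polynomial number of starting configurations. The paper avoids this by asking only that each path grow $K$ times at \emph{some} times in $[n/2,n]$ without dying; the probability of that event for one path is of order $(8K)^{-K}$, not $n^{-\Theta(K)}$, and they cast the computation as a balls-in-bins process (white ball $=$ growth, black ball $=$ death) so the second-moment bound follows from negative association of bin counts. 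If you instead interpret your ``appropriate spacing'' as summing over all admissible $(s_i)$, you recover essentially this calculation, but then the shared-attachment covariance bookkeeping you warn about becomes the entire proof and the balls-in-bins framing is the way to organize it. Separately, you have not established that there \emph{are} polynomially many suitable starting configurations: the paper needs its Theorem~\ref{thm:SmallDegreeVertices} (polynomial growth of $X_n^{\ge 2}$ and $X_n^{\ge 3}$) to prove that $H_{n/2}$, the number of leaves hanging off a degree-$2$ vertex at time $n/2$, is at least $n^{\delta/2}$ with high probability; without an analogous input your second-moment argument has no base to stand on. Finally, invoking Theorem~\ref{thm:StationaryDegree} for the ``no further children'' clauses costs a factor $c_2^{2k}=n^{-\Theta(1)}$, which further tightens an already failing probability budget; the paper's path-death bookkeeping (track deaths only within distance $K$ of the leaf end) handles this at the right exponential scale.
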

\end{tcolorbox}
An input to the proof of Theorem \ref{thm:LeafDepth} is the corresponding result for the URRT, which we state as a separate theorem.

\begin{tcolorbox}
    \begin{thm}[Leaf depth in URRT]\label{thm:LeafDepthURRT}
        Let $M'_n$ be the maximal distance of any vertex to its nearest leaf in a URRT. Then, 
        $$ M'_n=\Theta\left( \frac{\log(n)}{\log\log(n)} \right)~\text{in probability}. $$
    \end{thm}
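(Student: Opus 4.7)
The plan is to prove the upper and lower bounds separately, with the upper bound reducing to a subtree analysis at each vertex and the lower bound requiring a more delicate probabilistic construction.

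For the upper bound $M'_n = O_p(\log n/\log\log n)$, I would exploit that for any vertex $v$, the subtree $T_v$ rooted at $v$, conditional on its size $|T_v|$, is itself distributed as a URRT, and the leaves of $T_v$ coincide with the leaves of the ambient tree lying in $T_v$; consequently the distance from $v$ to its nearest leaf is at most the minimum depth of a leaf in $T_v$ when $T_v$ is rooted at $v$. For a URRT on $m$ vertices, a recursive splitting argument shows that the root has $H_{m-1}\sim\log m$ children (whose count is concentrated via Chernoff), so that by pigeonhole one of the children's subtrees has size at most $2m/\log m$; iterating this splitting $O(\log m/\log\log m)$ times reduces the subtree size to $O(1)$, yielding that the minimum-depth leaf from the root of a URRT of size $m$ is at most $C\log m/\log\log m$ with probability $\ge 1-m^{-\alpha}$ for suitable constants $C,\alpha$. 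Since $x \mapsto \log x/\log\log x$ is monotone for $x$ large, this suffices for vertices with $|T_v|$ not too small; for vertices with $|T_v|\le C\log n/\log\log n$, the trivial bound $S(v)\le|T_v|-1$ is already sufficient. A careful union bound over the two regimes then gives the upper bound.

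For the lower bound $M'_n = \Omega_p(\log n/\log\log n)$, which I expect to be the main obstacle, the plan is to exhibit a vertex $v^\ast$ whose ball of radius $r-1$ in $T_n$ contains no leaf, for $r=c\log n/\log\log n$. The candidate vertices are those whose subtree $T_v$ has size $m$ in the sub-polynomial range $m\approx n^{1/(e\log\log n)}$; by reversing the Poisson-type tail estimate used in the upper bound (the expected number of leaves at depth $\le r-1$ in such a $T_v$ is bounded), one shows that with probability bounded away from zero the subtree $T_v$ has no leaf within depth $r-1$ from $v$. The delicate part is ensuring that leaves outside $T_v$ are also far: this requires every ``other subtree'' of each ancestor $u_i$ of $v$, at tree distance $i<r$, to have minimum-depth leaf at least $r-i$ from $u_i$. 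I would handle this through a second-moment argument on the number of ``good'' candidate vertices at appropriate depths, exploiting the fact that, conditionally on the relevant subtree sizes, the structures on disjoint branches of the URRT are independent, so that the indicator events factor along the branches emanating from $v$ and from each of its ancestors.

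The main obstacle is the lower bound, and specifically the bookkeeping required to control the joint distribution of the descendant-side condition on $T_v$ and the ancestor-side conditions on the ``other subtrees'' at each ancestor of $v$: one must choose the candidate size range and depth so that enough independent configurations exist, and then verify that the first and second moments of the count of good vertices are of comparable order to conclude existence with high probability.
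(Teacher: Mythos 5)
Your upper bound is a valid alternative to the paper's, but heavier. The paper also reduces to the distribution of the subtree conditioned on its size, but instead of counting children and applying pigeonhole it descends to a \emph{single} canonical child (the one of largest label) and exploits the exact identity $\p{|T'_n(w_n(v))|\geq\ell\mid|T'_n(v)|=m}=1/\ell$ for $\ell<m$; this telescopes to $\p{|\cP_n(v)|\geq k}\leq 1/k!$ in one step, with no Chernoff bound on the root degree, no pigeonhole, and no regime split for small subtrees. Both routes yield the same order, but the paper's is substantially cleaner.

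The lower bound has a genuine gap, on both sides. On the subtree side, the claim that for $m\approx n^{1/(e\log\log n)}$ the root of a URRT of size $m$ is at distance at least $r$ from every leaf with probability bounded away from zero is false: the same telescoping identity shows this probability is at most $1/r!$, which for $r=c\log n/\log\log n$ is of order $n^{-c(1+o(1))}\to 0$. Your first-moment heuristic does not rescue this: with the stated $m$ the expected number of leaves at depth $\leq r-1$ is of order $(\log m)^{r-1}/(r-1)!$, which is \emph{not} bounded when $c<1$; and even where the expectation is $O(1)$, that cannot yield positive probability of zero such leaves, because the complementary event also requires the subtree to have depth $\geq r$. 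On the ancestor side, the constraint is much more rigid than ``delicate'': for each ancestor $u_i$ of $v^*$ at distance $i<r$, any sibling branch $w$ at $u_i$ must itself contain no leaf within depth $r-i-2$, an event of probability at most $1/(r-i-1)!$; for small $i$ this is negligible, so the event effectively forces $u_1,\dots,u_{r-1}$ all to have degree $2$, i.e.\ $v^*$ must sit deep inside a degree-two chain of length about $2r$. That is precisely the object the paper produces, but by a more tractable \emph{dynamic} route: it bounds $M'_n\geq P'_n/2$ where $P'_n$ is the largest leaf-to-branchpoint distance, then watches degree-two chains grow from the $\Theta(n)$ leaves already present at time $n/2$, coupling growth and death to a balls-in-bins process and running a second-moment argument over bins, which are negatively associated. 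The dynamic viewpoint avoids the heavy dependence your static second-moment count would have to control across candidate vertices sharing ancestors, and is the genuine content of the lower bound.
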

\end{tcolorbox}
We prove Theorem~\ref{thm:LeafDepth} using Theorem~\ref{thm:LeafDepthURRT} and a coupling between URRTs and random friend trees under which distances are at most a factor of two larger in the URRT than in the random friend tree to which it is coupled. This coupling is presented in Lemma~\ref{lem:coupling_URRT_1}, below.

Another global property of interest is the degree distribution of the tree. We study degree statistics at both ends of the spectrum, for both sub-linear-degree vertices and bounded-degree vertices. We get the following lower bound on the number of hubs in $T_n$. 
\begin{tcolorbox}
\begin{thm}[Number of hubs]
\label{thm:numberofhubs}
There exists a constant $\delta>0.1$ such that 
\[ \frac{\#\{u\in [n]:Z_u>0\}}{n^{\delta}}\to \infty\text{ a.s.}\]
\end{thm}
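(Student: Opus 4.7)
The plan is to combine Theorem \ref{thm:AbundanceHubs} with the polynomial lower bound on the number of non-leaves provided by Theorem \ref{thm:SmallDegreeVertices}, via a short forest-counting argument. First I would upgrade Theorem \ref{thm:AbundanceHubs} to a statement that holds simultaneously for every edge of every $T_n$: since every edge of $T_n$ has the form $\{m+1,W_m\}$ for some $1\le m\le n-1$, and since for each fixed $m$ the event $\{Z_{m+1}+Z_{W_m}>0\}$ has probability one, countable intersection over $m\ge 1$ gives that almost surely, every edge of every $T_n$ has at least one endpoint in the (random) set $H:=\{v\in\N:Z_v>0\}$. I work on this almost sure event for the rest of the argument.

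Set $H_n=H\cap[n]$, $N_n=\{v\in[n]:D_n(v)\ge 2\}$, and $B_n=N_n\setminus H_n$. Every $v\in B_n$ is a non-hub non-leaf, hence by the first step all of its $T_n$-neighbours lie in $H_n$; in particular, $B_n$ is an independent set of $T_n$, and $\sum_{v\in B_n}D_n(v)=e(B_n,H_n)\ge 2|B_n|$, where $e(\cdot,\cdot)$ denotes the edge count between two disjoint vertex sets. On the other hand, the subgraph of $T_n$ induced by the disjoint union $H_n\sqcup B_n$ is a forest on $|H_n|+|B_n|$ vertices and therefore contains at most $|H_n|+|B_n|-1$ edges, which bounds $e(B_n,H_n)$ from above. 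Combining gives $|B_n|\le |H_n|-1$, and since $N_n=(N_n\cap H_n)\sqcup B_n$ with $|N_n\cap H_n|\le|H_n|$, it follows that $|H_n|\ge(|N_n|+1)/2$.

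I then invoke Theorem \ref{thm:SmallDegreeVertices} to assert that $|N_n|=X_n^{\ge 2}\ge n^{0.1+\varepsilon_0}$ almost surely eventually, for some $\varepsilon_0>0$. Setting $\delta=0.1+\varepsilon_0/2>0.1$, the inequality $|H_n|\ge |N_n|/2$ then yields $|H_n|/n^\delta\ge n^{\varepsilon_0/2}/2\to\infty$ almost surely, completing the proof.

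The main obstacle is ensuring that Theorem \ref{thm:SmallDegreeVertices} indeed provides an almost sure lower bound for $X_n^{\ge 2}$ that strictly exceeds $n^{0.1}$, since the structural inequality $|H_n|\ge|N_n|/2$ is tight only up to a constant factor: a bound of precisely $n^{0.1}$ on $|N_n|$ would give $|H_n|\ge n^{0.1}/2$, which is not enough to produce the required $\delta>0.1$. By contrast, the countable-intersection upgrade of Theorem \ref{thm:AbundanceHubs} and the forest-edge count are routine, so the polynomial growth of $|H_n|$ is essentially inherited from that of $|N_n|$.
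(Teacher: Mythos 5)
Your proof is correct, and it follows the same three-step architecture as the paper's argument: (i) upgrade Theorem \ref{thm:AbundanceHubs} by a countable intersection so that, almost surely, the hub set $H$ meets every edge of every $T_n$; (ii) deduce the deterministic combinatorial bound $|H\cap[n]|\ge \tfrac12 X_n^{\ge 2}$; (iii) conclude via Theorem \ref{thm:SmallDegreeVertices}. Where you genuinely diverge is in step (ii). The paper encapsulates it as Lemma \ref{lem:edge_cover_half_non_leaves}, which shows that \emph{any} edge cover of a tree has size at least half the number of non-leaves, and it proves this by decomposing the tree into vertex-disjoint leaf-to-root paths and using that an edge cover of a path on $m$ vertices needs at least $\lfloor m/2\rfloor$ vertices. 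Your argument instead exploits the complement: the non-hub non-leaves $B_n$ form an independent set of minimum degree two, so $e(B_n,H_n)=\sum_{v\in B_n}D_n(v)\ge 2|B_n|$, while the forest induced on $H_n\sqcup B_n$ has at most $|H_n|+|B_n|-1$ edges; combining gives $|B_n|\le |H_n|-1$ and hence $|H_n|\ge(X_n^{\ge 2}+1)/2$. This is a clean alternative proof of the same inequality (and in fact yields the same general statement about edge covers of trees), trading the path decomposition for a degree-sum and a forest edge count. One minor point: the concern you raise at the end about tightness is not actually an obstacle. Theorem \ref{thm:SmallDegreeVertices} asserts $X_n^{\ge 2}/n^{\delta}\to\infty$ a.s.\ for a fixed $\delta>0.1$, and the inequality $|H_n|\ge X_n^{\ge 2}/2$ then immediately gives $|H_n|/n^{\delta}\to\infty$ a.s.\ with that \emph{same} $\delta$; there is no need to halve $\varepsilon_0$ or otherwise shrink the exponent.
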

\end{tcolorbox}

The following theorem concerns the number of high degree vertices.
\begin{tcolorbox}
    \begin{thm}[Abundance of high degree vertices]\label{thm:InfinityLinearDegree}
        For any sequence $(m_n)_{n\geq 1}$, satisfying $m_n=o(n)$, almost surely
        $$ \lim_{n\to \infty} X_n^{\geq m_n}=\infty.$$
    \end{thm}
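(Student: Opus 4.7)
The plan is to derive Theorem \ref{thm:InfinityLinearDegree} as a short consequence of Theorem \ref{thm:numberofhubs} (abundance of hubs) combined with Theorem \ref{thm:DegreeConvergence} (convergence of normalised degrees). The key observation is that each hub $u$ contributes, for all large $n$, a vertex of degree at least $Z_u n/2$, and since $m_n=o(n)$, such a vertex eventually has degree exceeding $m_n$. With infinitely many hubs available, we can produce arbitrarily many such vertices simultaneously.

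First I would work on the almost sure event $A$ defined as the intersection of the event from Theorem \ref{thm:numberofhubs}, which in particular forces the random set $H:=\{u\in\N:Z_u>0\}$ to be infinite (since $\#\{u\in[n]:Z_u>0\}/n^{\delta}\to\infty$ implies $\#\{u\in[n]:Z_u>0\}\to\infty$), with the event that $D_n(u)/n\to Z_u$ as $n\to\infty$ for every $u\in\N$. The latter is a countable intersection of the almost sure events provided by Theorem \ref{thm:DegreeConvergence}, hence still has full probability.

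Next, fix a realisation in $A$ and an arbitrary integer $K\ge 1$. Since $H$ is infinite I may choose $K$ distinct hubs $u_1,\ldots,u_K\in H$, and set $z^*:=\min_{i\le K}Z_{u_i}>0$. By the degree convergence there exists $N_1$ such that $D_n(u_i)\ge z^* n/2$ for every $i\le K$ and all $n\ge N_1$. Since $m_n=o(n)$, there also exists $N_2$ with $m_n\le z^* n/2$ for all $n\ge N_2$. Combining these, $D_n(u_i)\ge m_n$ for every $i\le K$ and every $n\ge\max(N_1,N_2,u_K)$, so $X_n^{\ge m_n}\ge K$ eventually. Since $K$ was arbitrary, $X_n^{\ge m_n}\to\infty$ on $A$.

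I do not expect any serious obstacle here; the substantive content is entirely absorbed into Theorems \ref{thm:numberofhubs} and \ref{thm:DegreeConvergence}. In fact, the weaker qualitative statement that $|H|=\infty$ almost surely would already suffice, so in principle one could even bypass the quantitative polynomial rate of Theorem \ref{thm:numberofhubs} for this particular corollary.
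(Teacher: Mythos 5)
Your argument is correct, but it takes a genuinely different route from the paper's, and with a heavier dependency chain. The paper does not invoke Theorem~\ref{thm:numberofhubs} at all: it instead fixes a target~$M$, uses the almost-sure logarithmic growth of the diameter (Lemma~\ref{lem:lowerbound_diam}) to exhibit a path of $2M+1$ vertices at some finite time $\tau$, and then applies Theorem~\ref{thm:AbundanceHubs} to every edge of that path to conclude that at least $M$ of those $2M+1$ vertices are hubs. After that the conclusion is the same soft argument you give. By contrast, your proof goes through Theorem~\ref{thm:numberofhubs}, which in the paper is derived from Lemma~\ref{lem:number_vertices_infinity} \emph{together with} the quite technical Theorem~\ref{thm:SmallDegreeVertices}; this is considerably more machinery than the present corollary needs, and the paper deliberately orders things so that Theorem~\ref{thm:InfinityLinearDegree} does not lean on Theorem~\ref{thm:SmallDegreeVertices}. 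You do flag this yourself by remarking that only $|H|=\infty$ is needed; the paper's route via the diameter plus Theorem~\ref{thm:AbundanceHubs} is precisely the lightweight way to obtain that fact. One minor simplification to your write-up: you do not actually need Theorem~\ref{thm:DegreeConvergence} here, since $Z_u$ is \emph{defined} as $\liminf_n D_n(u)/n$, so $Z_u>0$ already gives $D_n(u)\ge Z_u n/2$ eventually; invoking the full convergence theorem is superfluous.
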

\end{tcolorbox}

The next theorem gives polynomial upper and lower bounds for the number of bounded degree vertices. 
\begin{tcolorbox}
    \begin{thm}[Polynomial bounds on low-degree vertices]\label{thm:SmallDegreeVertices}
        There exist constants $0.1<\delta \leq \lambda<0.9$ such that, for any $k\geq 2$, almost surely
        $$ \lim_{n\to \infty } \frac{X_n^{\geq k}}{n^{\delta}}=\infty~, $$
        $$ \lim_{n\to \infty } \frac{X_n^{\geq k}}{n^{\lambda}}=0~. $$
    \end{thm}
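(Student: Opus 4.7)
Since $X_n^{\ge k} \le X_n^{\ge 2} =: N_n$, the upper bound reduces to controlling the number of non-leaves, and the lower bound we will prove separately for each fixed $k \ge 2$.

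\emph{Lower bound.} By Theorem~\ref{thm:numberofhubs}, $\#\set{u \in [n] : Z_u > 0}/n^\delta \to \infty$ almost surely for some $\delta > 0.1$, and Theorem~\ref{thm:DegreeConvergence} guarantees $D_n(u)/n \to Z_u > 0$ for each hub, so that $\tau_u^{(k)} := \min\set{n : D_n(u) \ge k}$ is almost surely finite. Hence
\[
X_n^{\ge k} \;\ge\; \#\set{u \in [n] : Z_u > 0,\ \tau_u^{(k)} \le n}.
\]
To handle the ``stragglers'' $\set{u : \tau_u^{(k)} > n}$, we apply Theorem~\ref{thm:numberofhubs} at the smaller scale $m = \lfloor n^{1-\varepsilon}\rfloor$: there are already $\omega(n^{(1-\varepsilon)\delta})$ hubs born by time $m$, each of which has had time $\Theta(n)$ to acquire degree $k$. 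A first-moment/Borel--Cantelli argument controls the (few) hubs whose limiting proportion $Z_u$ is so small that their degree has yet to reach $k$ by time~$n$. Choosing $\varepsilon$ small enough that $(1-\varepsilon)\delta > 0.1$ yields the desired almost-sure lower bound.

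\emph{Upper bound.} We track the non-leaves via $N_{n+1} - N_n = \I{W_n \text{ is a leaf in } T_n}$, so that
\[
\Cexp{\Delta N_n}{T_n} \;=\; p_n \;:=\; \frac{1}{n}\sum_{u:D_n(u)\ge 2}\frac{L_n(u)}{D_n(u)}.
\]
The naive bound $L_n(u)/D_n(u)\le 1$ gives only $p_n \le N_n/n$, yielding the trivial $N_n \le n$. To sharpen it we partition the non-leaves by hub status: by Theorem~\ref{thm:AbundanceHubs}, every non-hub non-leaf eventually has only hub neighbours (so its contribution $L_n(u)/D_n(u)$ vanishes), while for hubs Theorem~\ref{thm:DegreeConvergence} gives $L_n(u)/D_n(u) \to 1$. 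The dominant contribution to $p_n$ therefore comes from the hubs, giving $p_n = O(\#\set{u \in [n] : Z_u > 0}/n)$. Combined with a complementary polynomial upper bound on the hub count---obtained through a direct analysis of the hub creation probabilities, together with the deterministic constraint $\sum_u Z_u \le 1$ noted after Theorem~\ref{thm:DegreeConvergence}---this yields $\E{p_n} = O(n^{\lambda-1})$ for some $\lambda < 0.9$. A Freedman-type martingale concentration argument applied to the compensated process $N_n - \sum_{m<n} p_m$ finally lifts the expectation bound to the almost-sure statement $N_n = o(n^{\lambda'})$ for any $\lambda' > \lambda$.

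\emph{Main obstacle.} The chief difficulty lies in the upper bound. One must establish a polynomial upper bound on the hub count that complements Theorem~\ref{thm:numberofhubs}, and must quantify the rate at which the ratio $L_n(u)/D_n(u)$ converges to $1$ for hubs (and to $0$ for non-hub non-leaves), so as to control the finite-time contribution of non-hub non-leaves. The delicate point is breaking the feedback loop between $N_n$, the hub count, and $p_n$ with sufficient margin to beat the trivial linear bound, which is what forces the exponent $\lambda$ away from $1$.
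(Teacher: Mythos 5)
Your proposed proof has a serious circularity in the lower bound and missing lemmas in the upper bound; both directions depart substantially from the paper's actual argument.

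\emph{Circularity in the lower bound.} You invoke Theorem~\ref{thm:numberofhubs} (polynomial growth of the number of hubs) to deduce the lower bound of Theorem~\ref{thm:SmallDegreeVertices}. But in the paper, Theorem~\ref{thm:numberofhubs} is proved \emph{from} Theorem~\ref{thm:SmallDegreeVertices}: Lemma~\ref{lem:number_vertices_infinity} shows $\#\{v\in[n]:Z_v>0\} \ge \tfrac12 X_n^{\ge 2}$ (via the edge-cover Lemma~\ref{lem:edge_cover_half_non_leaves} and Theorem~\ref{thm:AbundanceHubs}), and then the polynomial lower bound $X_n^{\ge 2}\ge n^\delta$ from Theorem~\ref{thm:SmallDegreeVertices} yields Theorem~\ref{thm:numberofhubs}. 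The paper explicitly flags this ordering: ``Although Theorem~\ref{thm:SmallDegreeVertices} is invoked in the proofs of several of the earlier results, we postpone its proof.'' Deriving the $X_n^{\ge 2}$ bound from the hub count therefore begs the question.

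\emph{Gaps in the upper bound.} Your plan hinges on two claims that are neither proved in the paper nor obviously provable: (a) a polynomial \emph{upper} bound on the number of hubs (the constraint $\sum_i Z_i \le 1$ alone does not imply this --- it is compatible with infinitely many small positive $Z_i$), and (b) a quantitative rate at which $L_n(u)/D_n(u)$ decays for non-hub non-leaves. The timing issue in (b) is delicate precisely because new non-leaves are created continuously and each has $L_n(u)\ge 1$ for some random period; controlling the aggregate contribution requires more than the asymptotic fact that each individual contribution vanishes.

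\emph{What the paper actually does.} Both bounds come from a self-contained system of differential inequalities among $X_n^{\ge 2}, X_n^{\ge 3}$ (and more generally $X_n^{\ge k}$), exploiting how degree-$k$ vertices feed degree-$(k+1)$ vertices and vice versa. Lemma~\ref{lem:atleast2_vs_atleast3} gives $\Cexp{\Delta X_n^{\ge 2}}{T_n}\ge \tfrac{1}{3n}X_n^{\ge 3}$ and inequality \eqref{eq:diff1} gives a matching upper bound in terms of $X_n^{\ge 2}$ and $X_n^{\ge 3}$. Lemmas~\ref{lem:atleast3large_old} and~\ref{lem:atleast3small}, proved via the general Proposition~\ref{prop:`lines'} (sticky coupling to a random walk with positive drift), pin down the ratio $X_n^{\ge 3}/X_n^{\ge 2}$ to a bounded window. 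Once the drift of $X_n^{\ge 2}$ is known to be $\Theta(X_n^{\ge 2}/n)$ with explicit constants bounded away from $0$ and $1$, a comparison with a triangular P\'olya urn (\citet{Urnschemes}) yields both the polynomial lower bound (Proposition~\ref{prop:urns}) and the polynomial upper bound (Proposition~\ref{lem:upperbound_nonleaves}), with the induction over $k$ giving Theorem~\ref{thm:all_finite_deg_large} along the way. None of this requires any quantitative control on the number of hubs or on the limiting behaviour of individual vertices.
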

\end{tcolorbox}
It has been conjectured by \citet{MR3683821} that a stronger statement is true for the RFT. They conjecture that there exists a constant $\mu\approx 0.566$ such that, $n^{-\mu}X_n^{\geq k}\to X_k$, where $X_k$ is a non-degenerate random variable. Moreover, they conjecture that $X_n^{ k}/X_n^{\geq 2}$ has an almost sure limit which is $\Theta(k^{-(1+\mu)})$.

Finally, we show that $X_n^{\ge k}=\Theta(X_n^{\ge 2})$ almost surely, for any fixed $k$. 

\begin{tcolorbox}
    \begin{thm}[Comparing low-degree vertices]\label{thm:all_finite_deg_large}
There exists a sequence $(c_k)_{k\geq 2}$ of positive real numbers, such that for any $k\geq 2$,  
\[c_k<\liminf_{n\geq \infty} \frac{X_n^{\geq k+1}}{ X_n^{\geq k}} \le 1 \]
almost surely.
\end{thm}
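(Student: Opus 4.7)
The upper bound $\liminf_n X_n^{\geq k+1}/X_n^{\geq k} \le 1$ is immediate since $X_n^{\geq k+1} \le X_n^{\geq k}$ for every $n$. For the lower bound, the crux is a \emph{fast-upgrade lemma}: writing $\tau_v^k := \inf\{n : D_n(v) = k\}$, for each $k \ge 2$ there exists a constant $c_k > 0$ such that, for every vertex $v$ and every $\tau$, on the event $\{\tau_v^k = \tau\}$ we have $\Cprob{\tau_v^{k+1} \le 2\tau+1}{T_\tau} \ge c_k$.

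To prove the lemma, observe that since $v$ first attains degree $k$ at time $\tau$, the newly added vertex $u := \tau$ is a leaf neighbour of $v$. Consider the events
\[
G := \{V_m = u \text{ for some } m \in \{\tau+1,\ldots,2\tau\}\} \quad \text{and} \quad F := \{D_m(u) = 1 \text{ for every } m \in \{\tau,\ldots,2\tau\}\}.
\]
On $F \cap G$, at the first $m$ with $V_m = u$ the vertex $u$ is still a leaf whose unique neighbour is $v$, so $W_m = v$, and $v$'s degree becomes $k+1$ at time $m+1 \le 2\tau+1$. Since the $V_m$ are independent uniforms on $\{1,\ldots,m\}$, $\Cprob{G}{T_\tau} = 1 - \prod_{m=\tau+1}^{2\tau}(1 - 1/m) = 1/2$. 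For $F^c$: while $u$ is still a leaf, its sole neighbour is $v$, so $\Cprob{W_m = u}{T_m,\ u \text{ still leaf}} \le 1/(m D_m(v)) \le 1/(mk)$ using $D_m(v) \ge D_\tau(v) = k$, and summing this bound across the window gives $\Cprob{F^c}{T_\tau} \le (\log 2)/k$. Hence $c_k := 1/2 - (\log 2)/k > 0$ works for every $k \ge 2$.

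To deduce the theorem, enumerate the degree-$k$ arrival times $\sigma_1 < \sigma_2 < \cdots$ at which some vertex $v_j$ first reaches degree $k$, and set $\xi_j := \mathbf{1}\{\tau_{v_j}^{k+1} \le 2\sigma_j+1\}$. The lemma gives $\Cexp{\xi_j}{\mathcal{F}_{\sigma_j}} \ge c_k$ for the natural filtration, so by a martingale strong law of large numbers for bounded increments, together with $X_n^{\ge k} \to \infty$ a.s.\ (Theorem~\ref{thm:InfinityLinearDegree}),
\[
X_n^{\ge k+1} \;\ge\; \sum_{j:\,2\sigma_j+1 \le n} \xi_j \;\ge\; c_k \, X_{n/2}^{\ge k}\,(1 + o(1)) \qquad \text{a.s.}
\]

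The last and most delicate step is to convert this into a bound on the ratio $X_n^{\ge k+1}/X_n^{\ge k}$, which requires the doubling-type estimate $\limsup_n X_n^{\ge k}/X_{n/2}^{\ge k} < \infty$ almost surely, i.e., that the degree-$k$ events are not too concentrated in short time windows. This is the main technical obstacle; I expect to handle it by combining the polynomial-order bounds of Theorem~\ref{thm:SmallDegreeVertices} with iterated applications of the fast-upgrade lemma at multiple time scales, controlling both numerator and denominator simultaneously.
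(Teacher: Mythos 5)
Your fast-upgrade lemma is a clean idea and is essentially correct, modulo a small constant issue: $\sum_{m=\tau}^{2\tau-1}1/m$ is not bounded by $\log 2$ uniformly in $\tau$ (it equals $1$ at $\tau=1$ and decreases to $\log 2$), so to get a uniform bound below $1$ you need to use that $\tau=\tau_v^k\ge k+1\ge 3$, giving $\sum_{m=\tau}^{2\tau-1}1/m\le 47/60$; then $c_k=\tfrac12-\tfrac{47/60}{k}>0$ for $k\ge 2$ and the conclusion survives. The martingale-SLLN step yielding $X_n^{\ge k+1}\ge c_k\, X_{n/2}^{\ge k}(1+o(1))$ a.s.\ also looks sound with an appropriate choice of filtration. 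However, the proposal does not prove the theorem, and the gap is exactly the one you flag at the end: you need the doubling estimate $\limsup_n X_n^{\ge k}/X_{n/2}^{\ge k}<\infty$ a.s., and the tools you cite do not provide it. The polynomial sandwich of Theorem~\ref{thm:SmallDegreeVertices}, $n^\delta\ll X_n^{\ge k}\ll n^\lambda$ with $\delta\approx 0.1$, $\lambda\approx 0.9$, only controls $X_n^{\ge k}/X_{n/2}^{\ge k}$ to within a factor $\Theta(n^{\lambda-\delta})\approx n^{0.8}$, which diverges; and iterating the fast-upgrade lemma across dyadic scales does not shrink this, since each scale incurs the same uncontrolled doubling factor. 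Worse, for $k\ge 3$ a doubling estimate for $X_n^{\ge k}$ would follow from the theorem you are trying to prove together with a doubling estimate for $X_n^{\ge 2}$, so the plan is circular unless you independently establish doubling for $X_n^{\ge 2}$ — a statement that is plausible but which neither the paper nor your proposal proves.

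The paper's proof avoids the doubling problem by never comparing the process at two different macroscopic times. It works with the discrepancy $B_n := X_n^{\ge k+1}-\beta X_n^{\ge k}$ rather than the ratio: Lemma~\ref{lem:fastgrowthverticesdegreek} and Lemma~\ref{lem:upperbound_change_atleastk} show that on $\{X_n^{\ge k+1}<\alpha X_n^{\ge k}\}$ the conditional increment of $X_n^{\ge k+1}$ dominates a constant multiple of that of $X_n^{\ge k}$, and Proposition~\ref{prop:`lines'} converts this local drift condition into the a.s.\ bound $B_n\ge -C\log n$ via a random-walk comparison. Since $X_n^{\ge k}=\omega(\log n)$ a.s., this yields $\liminf X_n^{\ge k+1}/X_n^{\ge k}\ge\beta$. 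Your approach would become viable if you could prove a constant-order doubling estimate for $X_n^{\ge 2}$ (e.g.\ by strengthening the P\'olya-urn coupling in Proposition~\ref{prop:urns} to a two-sided comparison), but as written there is a genuine gap.
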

\end{tcolorbox}

We prove Theorem~\ref{thm:typical} in Section~\ref{sec:typical} and we prove Theorem~\ref{thm:Diameter} in Section~\ref{sec:Diameter}. The proofs of Theorems~\ref{thm:LeafDepth} and \ref{thm:LeafDepthURRT} are in Section~\ref{sec:Leafdepth}. Finally, Theorems~\ref{thm:numberofhubs} and \ref{thm:InfinityLinearDegree} are proven in Section~\ref{sec:LargeDegreeVertices} and Theorems~\ref{thm:SmallDegreeVertices} and \ref{thm:all_finite_deg_large} are proven in Section~\ref{sec:SmallDegreeVertices}. Although Theorem~\ref{thm:SmallDegreeVertices} is invoked in the proofs of several of the earlier results, we postpone its proof to later in the paper, as it is quite technical. 

\section{Proofs of local properties}\label{sec:prooflocalproperties} 
\subsection{Hubs}[Proof of Theorems~\ref{thm:DegreeConvergence} and ~\ref{thm:AbundanceHubs}]\label{sec:Hubs} 
We prove Theorem \ref{thm:DegreeConvergence} with a submartingale argument, deferring a crucial step of the proof to Section \ref{sec:proofglobalproperties}.

\begin{proof}[Proof of Theorem~\ref{thm:DegreeConvergence}]
Fix $v\in T_n$. Note that $L_{n+1}(v) = L_n(v) + 1$ if  $W_n = v$. For $V_n = u$ a neighbour of $v$, $\p{W_n=v|V_n=u}=1/D_n(u)$. Since $V_n$ is a uniform sample from $[n]$, it follows that
$$ \p{L_{n+1}(n)=L_n(v)+1\mid T_n}=\sum_{u \in \cN(v;T_{n})} \frac{1}{n} \frac{1}{D_{n}( u)}~. $$
Next, $L_{n+1}(v) = L_n(v) -1$ if $V_n= v$ and $W_n \in \cL(v; T_n)$. Thus,

$$\Cexp{L_{n+1}(v)}{T_{n}} =L_{n}(v) - \frac{1}{n} \frac{L_{n}(v)}{D_{n}(v)} + \sum_{u \in \cN(v;T_{n})} \frac{1}{n} \frac{1}{D_{n}( u)}~.$$
Using that $L_{n}(v) \le D_{n}(v)$ and, for a leaf $v$, $D_{n}(v)=1$, we can lower bound the above by $L_{n}(v) - \frac{1}{n} + \frac{L_{n}(v)}{n}$. By rearrangement it follows that

\begin{align*}
\Cexp{\frac{L_{n+1}(v)-1}{n+1}}{T_{n}}  \ge \frac{L_{n}(v)-1}{n}. 
\end{align*}
Thus, for any $m \in \N$, the process $((L_n(v)-1)/n,n \ge m-1)$ is a submartingale relative to the filtration generated by the random friend tree process. It is bounded, so it converges almost surely. Furthermore, by the trivial inequalities
    \[\frac{L_n(u)}{n}\leq  \frac{D_n(u)}{n}\leq  \frac{L_n(u)+X_n^{\geq 2}}{n}~,\]
    the joint convergence follows from Theorem \ref{thm:SmallDegreeVertices} below, stating that $n^{-1}X_n^{\geq 2}\to 0$ almost surely.
\end{proof}

Recall that a vertex $u$ is a hub if $Z_u>0$. Theorem \ref{thm:AbundanceHubs} implies that each edge has at least one endpoint that is a hub, which, in particular, shows that hubs exist.

\begin{proof}[Proof of Theorem~\ref{thm:AbundanceHubs}]
Fix $m\in\mathbb{N}$. For $n\geq m+1$, write $D_n:=D_n(m+1)+D_n(W_m)$ for the total number of neighbours of vertex $m+1$ and vertex $W_m$ at time $n$. Write $L_n:=L_n(m+1)+L_n(W_m)$ for the number of those neighbours that are leaves. We have $L_{m+1}\geq 1$ and $D_{m+1}\geq 3$. Note that $D_n$ is non-decreasing and that if $D_{n+1}=D_n+1$ then also $L_{n+1}=L_n+1$, so $\Delta(L_n,D_n)\in \{(1,1),(0,0),(-1,0)\}$. 

Moreover,
\begin{equation}\label{eq:HubIncreaseLowerBound}
\probC{\Delta(L_n,D_n)=(1,1)}{(L_i,D_i),m+1 \le i \le n}
\ge 
\frac{1}{n}\pran{L_n+\frac{1}{D_n}}\, ,   
\end{equation}
since, to have $\Delta(L_n,D_n)=(1,1)$, it suffices that either $V_n\in\cL(m+1;T_n)\cup \cL(W_m;T_n)$ or else that 
$\{V_n,W_n\}=\{m+1,W_m\}$. We also have
\begin{equation}\label{eq:HubDecreaseUpperBound}
   \probC{\Delta(L_n,D_n)=(-1,0)}{(L_i,D_i),m+1 \le i \le n} \le \frac{\min(2,L_n)}{n}\, , 
\end{equation}
since if $\Delta(L_n,D_n)=(-1,0)$, then $V_n \in \{m,W_m\}$ and $W_n\in\cL(m;T_n)\cup \cL(W_m;T_n)$. 

We claim that $D_n\to \infty$ almost surely. We fix $k\in \N$ and show that the stopping time $\tau_{k}=\min\{n\ge m+1:D_n\ge k\}$ is finite almost surely. On the event that $\tau_k<n$, \eqref{eq:HubIncreaseLowerBound} implies that $\Delta D_n\mid T_n$ stochastically dominates a $\operatorname{Bernoulli}(\tfrac{1}{nk})$ random variable. If $\mathcal{B}_i$ are independent  $\operatorname{Bernoulli}(\tfrac{1}{ik})$ random variables, $ \sum_{i=m+1}^n \mathcal{B}_i \to \infty $ almost surely, so $\tau_k<\infty$ almost surely. It then follows that $\#\{n:L_n>0\}=\infty$ almost surely. Indeed, $D_n\to \infty$ implies that  $\Delta(D_n,L_n)=(1,1)$ infinitely many times, so if $L_n$ ever hits zero it almost surely becomes positive again. 

Let $(J_k,k \ge 0)$ be the sequence of jump times of the process $(\Delta(L_n, D_n))_n$, that is $J_0= m+1$ and $J_k = \min\{\ell > J_{k-1}:\Delta(L_{\ell - 1}, D_{\ell -1}) \neq (0,0)\}$ for $k\geq 1$. This sequence has infinite length because $D_n\to \infty$ almost surely.

Our goal is to show that $L_n$ grows linearly. To do so, we couple $L_n$ to an urn process. Inequalities \eqref{eq:HubIncreaseLowerBound} and \eqref{eq:HubDecreaseUpperBound} suggest that the growth of $L_n$ is similar to the growth of the number of black balls in a standard P\'olya urn with black and white balls. One difference being that, at time $n$, with probability at most $2/n$, a black ball is replaced by a white ball. Nonetheless, we exhibit a coupling between $L_n$ and  the number of black balls in a standard P\'olya urn of black and white balls of size $n$. More precisely, a coupling where $L_n$ is greater than the number of black balls in the P\'olya urn of size $n$. This coupling can fail, meaning that it is valid until a random time $S$ that is finite with positive probability. We say that the coupling succeeds if $S=\infty$. We show that this coupling succeeds with probability greater than $0$, and that, if it fails, we may try again by starting a new coupling at a subsequent time. This guarantees that one of the coupling attempts is successful, proving that $L_n$ grows linearly, because the number of black balls in a standard P\'olya urn grows linearly almost surely.  

The coupling is started at a time where $L_n$ is at least $10$. The bounds \eqref{eq:HubIncreaseLowerBound} and \eqref{eq:HubDecreaseUpperBound} on the transition probabilities show that the process $(L_{J_k},k \ge 0)$ stochastically dominates a  simple symmetric random walk reflected at $0$. This, in particular, implies that there are infinitely many $n$ such that $L_n\geq 10$, because $J_k\to \infty$ almost surely.

Let $\rho_1$ be the first time for which $L_{\rho_1}\geq 10$ (and so $L_{\rho_1}=10$). A first coupling is started from time $\rho_1$. For any $k$, if the $k$th coupling fails, let $\rho_{k+1}$ be the first time after the failure at which $L_{\rho_{k+1}}\geq 10$ and start the $(k+1)$st coupling from that time. We show that there is a $c>0$ so that for each $k$, given that couplings $1,\ldots,k-1$ all failed, the $k$th coupling succeeds with probability at least $c$. This implies that there is an almost surely finite $M$ so that the $M$th coupling succeeds.

So let us fix some $N>0$ and condition on $\rho_k=N$. Now define $B_n$ the number of black balls in a Pólya urn starting at time $N$ with $5$ black balls and $N-5$ white balls (with the standard replacement rule that a drawn ball is replaced along with one extra ball of the same colour). Note that $L_N=L_{\rho_k}\geq 10$ so $L_N-B_N\geq 5$. We couple $L_n$ and $B_n$ from time $N$ onwards and we say the coupling fails at time $S$ if $S$ is the first time $S>N$ at which $L_S-B_S\leq 4$. If the coupling never fails we set $S=\infty$. Then, for $n\in[N,S]$, we can couple $L_n$ and $B_n$ such that if $B_{n+1}=B_n+1$ then $L_{n+1}=L_n+1$. From this coupling, for $n\ge N$,
\begin{align*}
    \p{\Delta (L_{n}-B_{n})=1\mid S>n }&\ge  \frac{L_n-B_n}{n}\\
    \p{\Delta (L_{n}-B_{n})=-1\mid S>n}&\le  \frac{2}{n}. 
\end{align*}
This means that until the coupling fails, $L_n-B_n$ can be coupled to a symmetric random walk for which an increment with value $1$ is twice as likely as an increment with value $-1$.  We introduce $R_k$, a random walk with $R_0=5$ and
$$R_{k+1}-R_k=\begin{cases}
    +1 \text{ with probability } \frac{2}{3}, \\
    -1 \text{ with probability } \frac{1}{3}~.
\end{cases}$$
Set $I_0=N$ and let $I_{k+1}=\min\{j\ge I_k:\Delta(L_n-B_n)\neq 0\}$ be the $k$th jump time of $L_n-B_n$. Then $(R_k,k\ge 0)$ and $(L_n-B_n, n\ge N)$ can be coupled such that if $I_k\le S$
$$L_{I_k}-B_{I_k}\geq R_{k}.$$
With positive probability $R_k>4$ for all $k$, so with positive probability, not depending on $N$, $S=\infty$. This shows that, almost surely, one of the coupling attempt succeeds. Suppose that the $k$th coupling succeeds and that $\rho_k=N$. Then, for $B_n$ as above, $L_n\geq B_n$ for $n\geq N$, so since 
\[\lim_{n\to\infty}\frac{B_n}{n}>0\text{ almost surely}, \]
by a standard result on P\'olya urns (see \citet[Section 3.2]{mahmoud2008polya}), we also get that
\[\liminf_{n\to\infty}\frac{L_n}{n}>0\text{ almost surely,} \] which implies the statement.\qedhere
\end{proof}
\subsection{Expected degree of $W_n$}[Proof of Theorem~\ref{thm:deg_W_n}]\label{sec:deg_W_n}

We show the statement using the almost sure martingale convergence theorem by identifying a supermartingale. Set $Y_n:=\E{D_n(W_n)|T_n}$ so that $Y_n$ is adapted to $\sigma(T_n)$ and note that

\[Y_n=\frac{1}{n}\sum_{i\in [n]}\Cexp{D_n(W_n)}{T_n, V_n=i} =\frac{1}{n}\sum_{i\in [n]}\left( \frac{1}{D_n(i)}\sum_{j\sim_{T_{n}} i} D_n(j)\right)~.\]
To identify the supermartingale, we study

\[\Cexp{(n+1) Y_{n+1}}{ T_{n}}=\Cexp{\sum_{i\in [n+1]} \sum_{j\sim_{T_{n+1}} i}\frac{D_{n+1}(j)}{D_{n+1}(i)} }{T_n}.\]
Observe that the only randomness in $Y_{n+1}$, conditional on $T_n$, comes from the choice of $W_n$. It holds that $D_{n+1}(W_n)=D_{n}(W_n)+1$, and the new neighbour of $W_n$ (vertex $n+1$) has degree $1$. Moreover, $D_{n+1}(n+1)=1$, because every vertex starts as a leaf, and its neighbour is $W_n$. Finally, $D_{n+1}(i)= D_{n}(i)$ for all other $i$. Therefore, we get the following equalities for the different terms in $\sum_{i\in [n+1]} \sum_{j\sim_{T_{n+1}} i}\frac{D_{n+1}(j)}{D_{n+1}(i)}$:

\[ \sum_{j\sim_{T_{n+1}} i}\frac{D_{n+1}(j)}{D_{n+1}(i)}=\begin{cases}\frac{1}{D_n(W_n)+1} + \sum_{j\sim_{T_{n}} W_n}\frac{D_{n}(j)}{D_{n}(W_n)+1}&\text{ for }i=W_n\\
D_{n}(W_n)+1 &\text{ for }i=n+1\\
\frac{\I{i\sim_{T_n} W_n} }{D_{n}(i) } + \sum_{j\sim_{T_{n}} i}\frac{D_{n}(j)}{D_{n}(i)}&\text{ otherwise.}
\end{cases}\]
Combining these cases, we see that 
\begin{align*}
& \Cexp{(n+1) Y_{n+1}}{T_{n}}\\
&=\sum_{i\in [n]} \sum_{j\sim_{T_n} i} \frac{D_{n}(j)}{D_{n}(i)}\\
&\quad +\Cexp{\frac{1}{D_n(W_n)+1}-\left( \frac{1}{D_n(W_n)}-\frac{1}{D_n(W_n)+1}\right)\sum_{j\sim_{T_n} W_n} D_n(j)}{T_n}\\
&\quad +\Cexp{D_n(W_n)+1+\sum_{j\sim_{T_n} W_n} \frac{1}{D_n(j)} }{T_n}.
\end{align*}
Then, using that $D_n(j)\ge 1$ for all $j\sim_{T_n} i$ we get that the second term on the right hand side is positive. To get an upper bound for the third term, we again use that $D_n(j)\ge 1$ to get that

\[\Cexp{(n+1) Y_{n+1}}{ T_{n}} \le \sum_{i\in [n]} \sum_{j\sim_{T_n} i} \frac{D_{n}(j)}{D_{n}(i)}+\Cexp{2D_n(W_n)+1}{T_n}\le (n+2)Y_n +1,\]
so that $\Cexp{Y_{n+1}/(n+2)}{ Y_n} \leq Y_n/(n+1)+1/(n+1)^2$, and therefore $Y_n/(n+1)-\sum_{i=1}^n 1/i^2$ is a supermartingale in the filtration generated by $T_n$, and therefore has an almost sure limit. Since $1/i^2$ is summable, it follows that $\tfrac{1}{n}\Cexp{D_n(W_n)}{T_n}$ has an almost sure limit.

To see that the limit is positive, note that Theorem~\ref{thm:AbundanceHubs} implies that for any $\varepsilon>0$,  there is a $\delta>0$ so that $\p{Z_1+Z_2>\delta}>1-\varepsilon$.

Then, observe that 
\begin{align*}&\Cexp{\tfrac{1}{n}D_n(W_n)}{ T_n}>\tfrac{1}{n}D_n(1)\Cprob{W_n=1 }{ T_n}+\tfrac{1}{n}D_n(2)\Cprob{W_n=2 }{ T_n}\\
&\ge \frac{L_n(1)^2+L_n(2)^2}{n^2}~,\end{align*}
since $D_n(i)\geq L_n(i)$ and $\Cprob{W_n=i }{ T_n}\geq L_n(i)/n$ for $i=1,2$. Then, note that, if $L_n(1)+L_n(2)>\delta n/2$ then  \[\frac{L_n(1)^2+L_n(2)^2}{n^2}\ge \left(\max\{L_n(1)/n,L_n(2)/n\}\right)^2\ge (\delta/4)^2,\] so
\begin{align*}\p{\tfrac{1}{n}\Cexp{D_n(W_n)}{T_n}>\delta^2/16}\ge \p{L_n(1)+L_n(2)>\delta n/2} .\end{align*}
But, $\tfrac{1}{n}(L_n(1)+L_n(2))\to Z_1+Z_2$ almost surely, so \[\liminf_{n\to\infty} \p{L_n(1)+L_n(2)>\delta n/2}>1-\varepsilon\] so also \[\liminf_{n\to\infty} \p{\tfrac{1}{n}\Cexp{D_n(W_n)}{T_n}>\delta^2/16}>1-\varepsilon,\]  which implies the statement.

The convergence in expectation follows from the bounded convergence theorem, since $\tfrac{1}{n}D_n(W_n)\le 1$ deterministically.

\subsection{Eternal leaves and eternal degree  \texorpdfstring{$k$}{k} vertices}[Proof of Theorem~\ref{thm:StationaryDegree}]\label{sec:EternalLEaves}
Note that, for any integer $i$, $D_n(i)$ is increasing in $n$ and therefore it has an almost sure limit (that might be infinite). For a vertex $\ell$ that is a leaf at time $m$, we say it is \emph{temporary} if $\lim_{n\to \infty} D_n(\ell)>D_m(\ell)=1$. Otherwise we call it \emph{eternal}. Similarly, we call a vertex $v$ that has degree $k$ at time $m$ \emph{temporary} if $\lim_{n\to \infty} D_n(v)>D_m(v)=k$ and otherwise we call it \emph{eternal}. Informally, our next proposition says that, only a bounded number of leaves next to a given hub ever stop being a leaf. 

\begin{prop}\label{prop:mortal_leaves}
 For $n\geq v$, let $S_n(v)$ be the number of temporary leaves attached to $v$ at time $n$. If $\p{Z_v>0}>0$, then conditional on $Z_v>0$, $S_n(v)$ is tight, that is, for all $\varepsilon>0$ there exists a constant $M>0$ such that $\Cprob{S_n(v) > M}{Z_v >0}<\varepsilon$ for all $n\geq v$.  
\end{prop}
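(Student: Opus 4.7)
The plan is to bound $\Cexp{S_n(v)}{T_n}$ uniformly on an event of probability close to $\p{Z_v>0}$ and then invoke Markov's inequality. The key observation is that when $v$ is a hub, a leaf neighbour of $v$ has probability only $\tfrac{1}{m D_m(v)} = O(1/m^2)$ per step of gaining a second neighbour, so although there are $L_n(v) = \Theta(n)$ leaves at time $n$, the expected number of them that will ever grow is $O(1)$. The first step is to dominate $S_n(v)$ by a count of first-growth events of old leaves of $v$. Setting
\[ G_n := \sum_{m > n} \I{V_m = v,\ W_m \in \cL(v; T_m),\ W_m \le n}, \]
one has $S_n(v) \le G_n$: each temporary leaf $\ell\le n$ has a first-growth time $\tau_\ell>n$ at which $\ell$ is still a leaf whose unique neighbour is $v$, so $W_{\tau_\ell}=\ell$ forces $V_{\tau_\ell}=v$; distinct temporary leaves have distinct $\tau_\ell$'s (at most one vertex grows per step), so these contribute distinct indicator terms.

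Next, for each $m > n$,
\[ \probC{V_m = v,\ W_m \in \cL(v; T_m),\ W_m \le n}{T_m} = \frac{|\cL(v;T_m)\cap [n]|}{m\, D_m(v)} \le \frac{L_n(v)}{m\, D_m(v)}, \]
using that degrees are non-decreasing, so any $\ell\le n$ that is a leaf at some $m\ge n$ is also a leaf at time $n$. Summing yields
\[ \Cexp{S_n(v)}{T_n} \le L_n(v)\, \Cexp{\sum_{m>n}\frac{1}{m\,D_m(v)}}{T_n}. \]
I would then invoke Theorem~\ref{thm:DegreeConvergence}, i.e.\ $D_m(v)/m \to Z_v$ almost surely. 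For $c>0$, the event $E_{c,N_0} := \{D_m(v) \ge cm/2 \text{ for all } m \ge N_0\}$ satisfies $\p{E_{c,N_0} \mid Z_v\ge c} \to 1$ as $N_0\to\infty$. On $E_{c,N_0}$ with $n \ge N_0$, the tail sum is controlled by
\[ \sum_{m>n}\frac{1}{m D_m(v)} \le \sum_{m>n}\frac{2}{cm^2} \le \frac{2}{cn}, \]
and combining with $L_n(v) \le n$ gives $\E{S_n(v)\,\I{E_{c,N_0}\cap\{Z_v\ge c\}}} \le \tfrac{2}{c}$.

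To conclude, given $\varepsilon>0$, I would choose $c>0$ with $\p{0<Z_v<c} < \tfrac{\varepsilon}{3}\p{Z_v>0}$, then $N_0$ with $\p{E_{c,N_0}^c,\, Z_v\ge c} < \tfrac{\varepsilon}{3}\p{Z_v>0}$, and then $M$ large via Markov on the remaining event; for $n \ge N_0$ this yields $\Cprob{S_n(v) > M}{Z_v>0} < \varepsilon$, and the finitely many cases $n < N_0$ are handled by the deterministic bound $S_n(v) \le n \le N_0$ after possibly enlarging $M$. The main obstacle I anticipate is that on $\{Z_v>0\}$ the value of $Z_v$ may be arbitrarily small and the rate at which $D_m(v)/m$ approaches $Z_v$ is random and trajectory-dependent, so no uniform deterministic linear lower bound on $D_m(v)$ is available. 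The two-parameter truncation above in $c$ and $N_0$ is what resolves this, with the crucial balance being that the factor $L_n(v) \le n$ exactly cancels the $1/n$ coming from the harmonic-type tail $\sum_{m>n} 1/m^2$.
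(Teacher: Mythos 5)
Your proof is correct and takes essentially the same approach as the paper's: fix a high-probability event on which $D_m(v)$ grows linearly (the paper uses the equivalent event $\{L_m(v)>\delta m\ \forall m\ge N\}$), show that the expected number of first-growth events for pre-time-$n$ leaf neighbours of $v$ is $O(1)$ on that event because each step contributes at most $|\cL(v;T_m)\cap[n]|/(mD_m(v))=O(n/m^2)$, and apply Markov. Two small points to tidy up: the sum defining $G_n$ should start at $m=n$ rather than $m>n$ (a leaf present at time $n$ can first grow at step $n$ itself, contributing a negligible extra $O(1/n)$), and since $E_{c,N_0}$ is not $\cF_n$-measurable you cannot multiply your $T_n$-conditional bound by $\I{E_{c,N_0}}$ and take expectations; instead bound $\E{A_m\I{E_{c,N_0}}}\le\E{A_m\I{D_m(v)\ge cm/2}}$ term by term and condition on $\cF_m$ in each term, which yields the same constant $O(1/c)$.
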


\begin{proof}

Fix $\varepsilon>0$. Suppose $v$ is a hub, that is $Z_v=\lim_{n\to\infty}\frac{D_n(v)}{n}>0$. This implies that there is a $\delta$ and a $N>v$ such that $\p{\forall n\ge N\;L_n(v)>\delta n \mid Z_v>0}>1-\varepsilon/2$. We show that there exists a constant $K$ such that $\E{S_k(v)\I{\forall n\ge N\; L_n(v)>\delta n }}<K$
for all $k\geq N$.  We first show that this implies the statement. Write $\p{Z_v>0}=\rho$, so that $\E{S_k(v)\I{\forall n\ge N\;L_n(v)>\delta n}\mid Z_v>0}<K/\rho$. Observe that $L_n(v)>\delta n \;\forall n\ge N$ implies $Z_v>0$. Then, we see that for $k\geq N$ and $\varepsilon >0$, 

\begin{align*} &\p{S_k(v)>\tfrac{2K}{\rho\varepsilon} \mid Z_v>0}\\
&\le \p{\exists n\ge N : L_n(v)\le \delta n \mid Z_v>0 } + \p{S_k(v)>\tfrac{2K}{\rho\varepsilon}, \forall n\ge N\;L_n(v)>\delta n \mid Z_v>0 } \\
&\le \varepsilon/2+ \p{S_k(v)>\tfrac{2K}{\rho\varepsilon} \mid \forall n\ge N\; L_n(v)>\delta n }\p{L_n(v)>\delta n \;\forall n\ge N  \mid Z_v>0 }\\
&\le \varepsilon/2+ \tfrac{\varepsilon\rho}{2K}\Cexp{S_k(v)}{ \forall n\ge N\; L_n(v)>\delta n }\p{L_n(v)>\delta n \;\forall n\ge N  \mid Z_v>0 } \\
&= \varepsilon/2+ \tfrac{\varepsilon\rho}{2K} \Cexp{S_k(v)\I{\forall n\ge N\; L_n(v)>\delta n }}{ Z_v>0}<\varepsilon,
\end{align*}
where we use Markov's inequality in the penultimate line. 

We now show that there exists a constant $K$ such that $\E{S_k(v)\I{ \forall n\ge N\; L_n(v)>\delta n } }<K$
for all $k\geq N$.
Note that, at any time $M\geq k$, if $w$ is a leaf neighbouring $v$, then $w$ stops being a leaf if $V_M=v$ and $W_M = w$. Conditionally on $T_M$, this occurs with probability $1/(M D_M(v))$, so the probability that this happens for some vertex in $\cL(v; T_k)$ is at most $k/(M D_M(v))$, because $|\cL(v; T_k)|<k$.  Therefore, the probability that the number of leaves in $\cL(v; T_k)$ that are no longer leaves  increases at time $M\geq k$ satisfies 
\begin{align*} &\p{\Delta|\cL(v; T_k) \cap  \cL(v;T_M)^c | =1 , L_n(v)>\delta n  \;\forall n\ge N }\\
&\leq \p{\Delta|\cL(v; T_k) \cap  \cL(v;T_M)^c | =1 , L_M(v)>\delta M} \leq\frac{k}{\delta M^2}.
\end{align*}
Therefore 
\[\E{S_k(v)\I{ L_n(v)>\delta n \;\forall n\ge N}}\leq \sum_{M\geq k}\frac{k}{\delta M^2}\leq K\]
for some constant $K$ not depending on $k$, which proves the claim.
\end{proof}

The next corollary, stating that the number of eternal leaves attached to any edge grows linearly with high probability, is a consequence of the proposition above and Theorem~ \ref{thm:AbundanceHubs}. Indeed, since every edge has linear degree almost surely, given the presence of edge $(u,v)$, either $u$ or $v$ has probability at least $1/2$ of being a hub.

\begin{cor}
Fix an edge $(u,v)$ and for $n\geq \max(u,v)$ define $E_n(u,v) := |\{w\in \cN(u;T_n)\cup \cN(v;T_n): w \text{ is an eternal leaf}\}|$. Then, $E_n(u,v)$ grows linearly with high probability, that is, there exists $ \delta >0$ such that for every $\varepsilon>0$ there is a $N=\cN(u,v)$ such that
$$ \p{\exists n>N: E_n(u,v)<\delta n} <\varepsilon ~. $$
\end{cor}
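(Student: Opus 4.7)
The plan combines three earlier inputs. Theorem~\ref{thm:AbundanceHubs} guarantees $Z_u+Z_v>0$ almost surely, so at least one of $u,v$ is a hub. Theorem~\ref{thm:DegreeConvergence} gives $L_n(u)/n\to Z_u$ almost surely for each fixed vertex. Proposition~\ref{prop:mortal_leaves} asserts that the count $S_n(u)$ of temporary leaves at $u$ is tight conditional on $\{Z_u>0\}$. The key structural observation I would use alongside these is that the quantity $E_n^u(u):=|\{w\in\cN(u;T_n):w\text{ is an eternal leaf}\}|$ is \emph{non-decreasing} in $n$: once a neighbour has been declared eternal, it remains an eternal leaf forever and hence is counted at all later times. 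Since $E_n(u,v)\ge E_n^u(u)\vee E_n^v(v)$, it suffices to lower bound one of these monotone quantities linearly.

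Fix $\varepsilon>0$. Using Theorem~\ref{thm:AbundanceHubs}, pick $\delta_0>0$ such that $\p{\max(Z_u,Z_v)>2\delta_0}>1-\varepsilon/3$. On the event $\{Z_u>2\delta_0\}$, Theorem~\ref{thm:DegreeConvergence} yields $L_n(u)/n\to Z_u$ almost surely, and the computation inside the proof of Proposition~\ref{prop:mortal_leaves} provides the uniform-in-$k$ bound $\E{S_k(u)\mathbf{1}_A}\le K$ on the event $A=\{L_m(u)>\delta_0 m\text{ for all }m\ge N_0\}$, which has high conditional probability given $\{Z_u>2\delta_0\}$ for a suitable $N_0$. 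A Markov-plus-Borel--Cantelli argument along the dyadic subsequence $n_k=2^k$ then gives $S_{n_k}(u)/n_k\to 0$ almost surely on $A$, so $E_{n_k}^u(u)/n_k=L_{n_k}(u)/n_k-S_{n_k}(u)/n_k\to Z_u$ almost surely on $A$. Monotonicity of $E_n^u$ interpolates to all $n$ at the cost of a factor of two: for $n_k\le n<n_{k+1}$ we have $E_n^u(u)/n\ge E_{n_k}^u(u)/n_{k+1}$, so $\liminf_n E_n^u(u)/n\ge Z_u/2\ge\delta_0$ almost surely on $A$. A final Egorov-type argument extracts a deterministic time $N$ such that $E_n^u(u)\vee E_n^v(v)\ge\delta_0 n$ for all $n\ge N$ on an event of probability at least $1-\varepsilon$, with $\delta:=\delta_0$.

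The main obstacle is the passage from the pointwise-in-$n$ tightness in Proposition~\ref{prop:mortal_leaves} to control that holds simultaneously for all $n>N$; this is exactly what the monotonicity of $E_n^u(u)$ buys, which is why the proof tracks eternal-leaf counts rather than working directly with $L_n(u)-S_n(u)$ at each $n$. A subtler point concerns the quantifier ordering in the statement: as written, $\delta$ is chosen before $\varepsilon$, which is strictly stronger than what the above sketch produces (it would require that $\max(Z_u,Z_v)$ be bounded below by a deterministic constant almost surely on the event that $(u,v)$ is an edge). The natural reading of ``$E_n(u,v)$ grows linearly with high probability''---allowing $\delta$ to depend on $\varepsilon$---is precisely what the argument above delivers.
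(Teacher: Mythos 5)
Your proposal is correct and takes the same route the paper indicates (Theorem~\ref{thm:AbundanceHubs} plus Proposition~\ref{prop:mortal_leaves}), filling in details that the paper compresses into a single sentence; the technical step you supply---exploiting the monotonicity of the eternal-leaf count together with the uniform moment bound $\E{S_k(u)\mathbf{1}_A}\le K$ established inside the proof of Proposition~\ref{prop:mortal_leaves}, then applying Markov plus Borel--Cantelli along a dyadic subsequence and interpolating by monotonicity---is exactly what is needed to upgrade the pointwise tightness of $S_n$ to the almost-sure linear lower bound. Your remark on the quantifier ordering is also valid and worth recording: the corollary as written ($\exists\delta\,\forall\varepsilon$) asserts $\p{\liminf_n E_n(u,v)/n\ge\delta}=1$ for a single deterministic $\delta>0$, which would require $Z_u\vee Z_v$ to be essentially bounded away from zero; this is strictly stronger than Theorem~\ref{thm:AbundanceHubs}, which gives only $Z_u+Z_v>0$ almost surely, and is not established anywhere in the paper, so what the argument (yours and the paper's sketch alike) actually delivers is the weaker but natural reading $\p{\liminf_n E_n(u,v)/n>0}=1$, i.e.\ the $\forall\varepsilon\,\exists\delta$ version.
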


We now prove Theorem~\ref{thm:StationaryDegree}.

\begin{proof}[Proof of Theorem~\ref{thm:StationaryDegree}] 
    Fix $v\in \N$. We first prove the statement for $k=1$, and then discuss how to adapt the proof to general $k$. If $D_n(v)=1$ then let $w$ denote the unique neighbour of $v$ in $T_n$. We show that $v$ is an eternal leaf with positive probability by showing that, with positive probability, the vertex $w$ acquires a large number of leaf neighbours, ensuring that both the degree of $w$ grows and that, when a new vertex is attached to a uniform neighbour of $w$, it is unlikely that $v$ is chosen.
    Fix $N$ and let    
    \[\tau=\min\left\{t\in \N:\#\{n<i\leq t: V_i\in \{v,w\}\right\}=N\}\] 
    be the random time at which a new vertex is attached to a random neighbour of either $v$ or $w$ exactly $N$ times since time $n$. Define $A_N$ as the event $\{\#\{n<i\leq \tau: V_i =v\}=N\}$. Since $V_i$ is chosen uniformly at random, with probability $2^{-N}$, $V_i=v$ exactly $N$ times between times $n$ and $\tau$. So, $\p{A_N}=2^{-N}$. Recall that $D_n(v)=1$, so conditionally on $A_N$,  $D_\tau(v)=1$ because for all $i\in[n,\tau]$, $V_i\neq u$, and thus $W_i\neq v$. Moreover, $L_\tau(w)\geq N$, because, conditioned on $A_N$, when $V_i=v$, $W_i=w$, so that a new leaf is attached to $w$. Since for all $i\in[n,\tau]$ $V_i\neq u$, these leaves stay leaves until time $\tau$. We show that, on the event $A_N$, $v$ is an eternal leaf with positive probability, because it is likely that $w$ continues to acquire many leaf neighbours beyond time $\tau$, making it unlikely for the degree of $v$ to grow. 

    For $j\geq 0$, set $a_j=\tau\cdot 2^j$ and $\ell_j=N (5/4)^j$ so that $a_{j+1} - a_j = a_j$ and $\ell_{j+1} - \ell_j = \ell_j/4$. Define the following events for $j\geq 0$:
    \begin{align*}
        E_j & := \{ D_{a_{j+1}}(v) >1\},\\
        F_j & := \{\# \{i \in (a_j, a_{j+1}] :\, V_i=w\}> m(j+1)\},\\
        G_j & := \{L_{a_{j+1}}(w) < \ell_{j+1}\},\\
        B_j & := E_j \cup F_j \cup G_j,
    \end{align*}
    where $m>0$ is such that $m < N/20$. In words, $E_j$ is the event that vertex $v$ is not a leaf in $T_{a_{j+1}}$. The event $F_j$ corresponds to the event that between steps $(a_j, a_{j+1}]$, new vertices attach to a random neighbour of  $w$ more than $m(j+1)$ times and $G_j$ corresponds to the event that $w$ has less than  $\ell_{j+1}$ leaf neighbours in $T_{a_{j+1}}$. It suffices to prove that %
    \begin{align}\label{eq:bad_events_not_too_likely}
        \p{\bigcup_{j\geq 0}B_j\mid A_N } < 3/4~.
    \end{align}
    Indeed,
    $$\p{v\text{ is an eternal leaf}}=\p{ \cap_{j\geq 0}E_j^c }\geq \p{ \cap_{j\geq 0}B_j^c }~,$$
    since $B_j^c\subset E_j^c$. By \eqref{eq:bad_events_not_too_likely},
    $$\p{ \cap_{j\geq 0}B_j^c }\geq \p{ \cap_{j\geq 0}B_j^c , \ A_N}= \p{A_N}\p{ \cap_{j\geq 0}B_j^c \mid \ A_N}\geq \frac{1}{4}2^{-N}~.$$
    This implies 
    $$\p{v\text{ is an eternal leaf}}\geq 2^{-(N+2)}~, $$
   
    proving Theorem~\ref{thm:StationaryDegree} for $k=1$. To show \eqref{eq:bad_events_not_too_likely} we begin by noting that
    $$ \Cprob{E_j}{\cap_{i=0}^{j-1}B_{i}^c,  A_N} \leq \E{\Cexp{\sum_{i=a_j}^{a_{j+1}} \frac{1}{i\ell_j}}{\tau}}\leq  \frac{1}{\ell_j}(1+\log(2))~,$$
    where the first inequality holds since the conditioning implies that vertex $w$ has degree at least $\ell_j$ from time $a_j$ onwards. Next, since the probability of $V_i=w$ equals to $1/i\leq 1/a_j$ for all $i\geq a_j$, we deduce that, for $j\geq 1$,
    \begin{align*}
        \Cprob{F_j}{\cap_{i=0}^{j-1}B_{i}^c, A_N}  & \leq \p{\text{Bin}(a_{j+1} - a_j, 1/a_j) > m(j+1)} \leq e^{-m(j+1)/3}~,
    \end{align*}
    by a Chernoff bound (\cite[Theorem 2.1.]{JanLucRuc00}). 
    Lastly, under the previous conditioning and given $F_j^c$, the probability of $G_j$ is less than the probability of creating fewer than $\ell_{j+1} +m(j+1) -\ell_j$ new leaf neighbours for $w$ between steps $(a_j, a_{j+1}]$. Conditionally on $F_j^c$, the probability of attaching vertex $i+1$ to $w$ at time $i\in (a_j, a_{j+1}]$ is at least $(\ell_j - m(j+1))/a_{j+1}$. Thus
    \begin{align*}
        \Cprob{G_j\!}{\!\cap_{i=0}^{j-1}B_{i}^c \,,F_j^c,  A_N} & \leq \p{\text{Bin}\!\left(\!a_{j+1} - a_j, \frac{\ell_j - m(j+1)}{a_{j+1}}\right) \leq \ell_{j+1} +m(j+1) -\ell_j}\\
        & \leq e^{-\ell_j/90},
    \end{align*}
    where we use that $m<N/20$, so that $m(j+1)\leq \tfrac{1}{10}\ell_j$ and $\ell_j-m(j+1)>0$ for all $j\geq 0$, and the bound then follows from the Chernoff bound.
    Putting everything together gives us that,
    \begin{align*}
         \Cprob{B_j}{ \cap_{i=0}^{j-1}B_{i}^c , A_N } & \leq \ell_j^{-1}(1+\log(2)) + e^{-m(j+1)/3} + e^{-\ell_j/90},
    \end{align*}
    and 
    \begin{align*}
         \Cprob{\bigcup_{j\geq 0}B_j}{A_N } & \leq \sum_{j\geq 0} \Cprob{B_j}{ \cap_{i=0}^{j-1}B_{i}^c , A_N } \\
         & \leq \frac{1}{N}\sum_{j\geq 0} (4/5)^j(1+\log(2)) + \sum_{j\geq 1}e^{-m(j+1)/3} + \sum_{j\geq 0}e^{-\cN(5/4)^j/90}.
    \end{align*}
   Now, choose $m$ sufficiently large so that the second sum is smaller than $1/4$, then choose $N$ large enough so that $m<N/20$ and the first and the third sum are both smaller than $1/4$; this proves the statement for $k=1$.  
    Next, we adapt the statement for general $k$. Conditionally on $T_n$, if $D_n(v) = k$ then we define $\tau$ as the random time at which a new vertex has been attached to a random friend of vertex $v$ or one of its $k$ neighbours exactly $kN$ times. That is, $\tau = \min\{t: \# \{n< i\leq t: V_i \in \{v\}\cup \cN(v;T_n)\} = kN\}$. Then, with probability at least $(k+1)^{-kN}$ exactly $N$ of these $kN$ new leaves are attached to each of the $k$ neighbours of $v$. We call this event $A_N = \{\forall w \in \cN(v;T_n): \#\{n< i \leq \tau: V_i = w\} = N\}$. Conditionally on $A_N$, we show that $v$ is an eternal degree $k$ vertex with positive probability, because it is likely that all of the neighbours of $v$ continue to acquire many leaf neighbours beyond time $\tau$, making it unlikely for the degree of $v$ to grow. 

   Let $w_1,\dots, w_k$ denote the neighbours of $v$ in $T_n$. Define the events
   \begin{align*}
        E_j^k & := \{D_{a_{j+1}}(v)> k\},\\
        F_j^k & := \bigcup_{l=1}^k\{\#\{ i \in (a_j, a_{j+1}] :\, V_i=w_l\} > m(j+1)\},\\
        G_j^k & := \bigcup_{l=1}^k\{L_{a_{j+1}}(w_l) < \ell_{j+1}\},\\
        B_j^k & := E_j \cup F_j \cup G_j~.
    \end{align*}
    Following the same arguments used in the case of $k=1$, it follows that, choosing $m$ and $N$ sufficiently large, we have
    \[\p{\bigcup_{j\geq 0} B_j^k\mid A_N }<3/4,\]
    which implies the statement for general $k$. 
\end{proof}
Note that it follows from Theorem \ref{thm:StationaryDegree} that, for $E^k_n$ the number of eternal degree $k$ vertices in $T_n$,
\begin{align}\label{eq:pos_frac_forever_deg_k}
    \E{E^k_n} > c_k\,\E{X_n^k}.
\end{align}

\section{Proofs of global properties}\label{sec:proofglobalproperties}

\subsection{Typical distances}[Proof of Theorem~\ref{thm:typical}]
\label{sec:typical}

Theorem~\ref{thm:typical} is a consequence of Theorem \ref{thm:AbundanceHubs}. Indeed, almost surely, one of vertices $1$ and $2$ is a hub, so a positive proportion of vertices is a neighbour of vertex $1$ or of vertex $2$. This implies that with probability bounded away from zero, both $U_n$ and $V_n$ are a neighbour of either vertex $1$ or $2$, so that the distance between them is two and the distance from $U_n$ to $1$ is at most two. More formally, by Theorem~\ref{thm:AbundanceHubs} there is an $i\in\{1,2\}$ and an $\epsilon>0$ so that $\p{Z_i>\epsilon}>\epsilon$. Then, for $n$ large enough, $\p{D_n(i)>\epsilon n/2}>\epsilon/2$. For such $n$, 

\[\p{d_n(U_n,1)\le 2}\ge\p{D_n(i)>\epsilon n/2, d_n(i,U_n)=1}\ge \epsilon^2/4 \]
and 
\[\p{d_n(U_n,V_n)= 2}\ge\p{D_n(i)>\epsilon n/2, d_n(i,U_n)=1, d_n(i,V_n)=1}\ge \epsilon^3/8, \]
which proves the statement.

\subsection{Diameter}[Proof of Theorem~\ref{thm:Diameter}]\label{sec:Diameter}
We show that $\mathrm{Diam}_n$ grows logarithmically almost surely, with explicit asymptotic lower and upper bounds. We start by proving a lower bound.

\begin{lem}\label{lem:lowerbound_diam}
Almost surely 
$$\liminf_n \frac{\mathrm{Diam}_n}{\log(n)}\geq 1~.$$
\end{lem}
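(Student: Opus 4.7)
The plan is to deduce the logarithmic lower bound on $\mathrm{Diam}_n$ from a coupling between $T_n$ and the uniform random recursive tree (URRT), combined with the classical asymptotics for the height of the URRT. Direct control of depths in $T_n$ is awkward, because the parent $W_n$ of a newly added vertex is a biased random neighbour of the uniform vertex $V_n$ rather than $V_n$ itself, so shifting the problem to the URRT via a comparison lemma is much cleaner.

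Concretely, I would invoke Lemma~\ref{lem:coupling_URRT_1} (stated a few sections below), which provides a joint construction of $T_n$ and a URRT $T'_n$ on the common vertex set $[n]$ satisfying $d_{T'_n}(u,v) \le 2\, d_{T_n}(u,v)$ for every pair $u,v \in [n]$. Taking the maximum of both sides over all pairs of vertices, this gives $\mathrm{Diam}(T'_n) \le 2\, \mathrm{Diam}_n$, so that $\mathrm{Diam}_n \ge \mathrm{Diam}(T'_n)/2$.

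I would then invoke the classical theorem of \citet{Pittel_height_trees}, already recalled in the introduction, that the height of the URRT $T'_n$ (the maximum distance from vertex~$1$) is asymptotic to $e \log n$ almost surely. Since the diameter of any tree is bounded below by the height from any fixed vertex, it follows that $\liminf_n \mathrm{Diam}(T'_n)/\log n \ge e$ almost surely. Combined with the inequality from the coupling, this yields
\[
\liminf_{n\to\infty} \frac{\mathrm{Diam}_n}{\log n} \ge \frac{e}{2} > 1
\]
almost surely, which is in fact stronger than the stated bound.

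The main obstacle is the construction of the coupling itself, which is carried out in Lemma~\ref{lem:coupling_URRT_1}; once that coupling is in place, the proof of the diameter lower bound is immediate. A purely self-contained direct argument (for instance, analysing the depth of vertices in $T_n$ from vertex~$1$ and trying to exhibit a logarithmic depth directly) seems significantly more delicate, since the law of $W_n$ depends on the full local degree structure of $T_n$ and is biased away from the uniform distribution in a way that makes a one-dimensional depth analysis hard to close.
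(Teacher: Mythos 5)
Your proposal has the coupling inequality in the wrong direction, and this sinks the argument. Lemma~\ref{lem:coupling_URRT_1} asserts $d_n(i,j)\leq 2\,d'_n(i,j)$ where $d_n$ is the random friend tree distance and $d'_n$ is the URRT distance, i.e.\ RFT distances are at most twice URRT distances. You state it as $d_{T'_n}(u,v)\leq 2\,d_{T_n}(u,v)$, which reverses the roles. Taking the maximum over pairs, the actual lemma yields $\mathrm{Diam}_n \leq 2\,\mathrm{Diam}(T'_n)$, so the coupling only gives an \emph{upper} bound on $\mathrm{Diam}_n$; this is exactly how the paper uses it (in the proof of Lemma~\ref{lem:lower_bound_diameter}). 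It provides no lower bound at all, and intuitively cannot: the friend tree has linear-degree hubs that compress typical distances (Theorem~\ref{thm:typical}), so there is no reason for it to be at least a constant fraction as deep as the URRT.

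The paper's proof of the lower bound is a direct argument that sidesteps any such comparison. It fixes a longest path $(i_0,\dots,i_{\mathrm{Diam}_n})$ in $T_n$ and observes that the penultimate vertices $i_1$, $i_{\mathrm{Diam}_n-1}$ have all but at most one of their neighbours being leaves; hence if $V_n$ lands on one of them (probability $\approx 2/n$), then with probability at least $1/2$, $W_n$ is one of those leaves and the diameter increases. This gives $\Cexp{\Delta\mathrm{Diam}_n}{T_n}\geq \tfrac{1}{n}$ once $\mathrm{Diam}_n\geq 3$, and a coupling of $\mathrm{Diam}_n$ with a sum of independent $\operatorname{Bernoulli}(1/i)$ variables together with concentration yields $\liminf \mathrm{Diam}_n/\log n\geq 1$ almost surely. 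If you want a fix in the spirit of your plan, you would need a coupling pointing the other way, which the paper does not provide; otherwise you should adopt the direct drift/Bernoulli-sum argument.
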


\begin{proof}
Among all the paths of length $\mathrm{Diam}_n$ present at time $n$, let us choose one. Denote it by $(i_0\to i_1 \to \cdots \to i_{\mathrm{Diam}_n})$. Let us remark that for $n\geq3$, $\mathrm{Diam}_n$ is always at least $2$. Vertices $i_1$ and $i_{\mathrm{Diam}_n-1}$ are such that at most one of their neighbours is not a leaf (otherwise there would be a path of length $\mathrm{Diam}_n+1$). This implies that, at time $n$, conditioned on $V_n=i_1$, with probability at least $1/2$ we have that $W_n\in \cL(i_1;T_n) $ (the same holds for $i_{\mathrm{Diam}_{n}-1}$). But, if $W_n\in\cL(i_1;T_n)\cup\cL(i_{\mathrm{Diam}_n-1};T_n)$ then the diameter increases by $1$. Because $\Cprob{V_n\in \{ i_1,i_{\mathrm{Diam}_{n}-1} \} }{\mathrm{Diam}_n\geq 3 }=2/n$ and  $\Cprob{V_n\in \{ i_1,i_{\mathrm{Diam}_{n}-1} \} }{ \mathrm{Diam}_n=2 }=1/n$ (note that if $\mathrm{Diam}_n=2$, then $i_1=i_{\mathrm{Diam}_n-1}$), then
\begin{equation}\label{eq:DiameterGrowsFast}
    \Cexp{\Delta \mathrm{Diam}_n }{T_n } \geq \frac{1}{n}\I{\mathrm{Diam}_n\geq 3}+\frac{1}{2n}\I{\mathrm{Diam}_n=2}~. 
\end{equation}
To prove the lemma, we first need to show that, almost surely, $\mathrm{Diam}_n$ reaches $3$ in finite time. Using \eqref{eq:DiameterGrowsFast}, there exists a coupling between $\mathrm{Diam}_n$ and
$$ S_n:=2+\sum_{i=4}^n Z_i~, $$
where $Z_i$ are independent Bernoulli random variables with parameter $1/(2i)$, such that $\mathrm{Diam}_n\geq S_n$ for $n\geq 3$. Let $M$ be the first time when $\mathrm{Diam}_n=3$ and $M'$ the first time when $S_n=3$. By our coupling of $\mathrm{Diam}_n$ and $S_n$, $M\leq M'$. A direct application of \cite[Exercise 2.9]{boucheron:hal-00794821} shows that $\liminf S_n =\infty $ almost surely and therefore $M'$ (and in turn $M$) are finite almost surely. We can now introduce a coupling of $\mathrm{Diam}_n$ from time $M$ onward. Conditionally on $M=m$, \eqref{eq:DiameterGrowsFast} implies that $\mathrm{Diam}_n$ can be coupled from $m$ onward to
$$ H_n^{(m)}:=3+\sum_{i=m}^n X_i~, $$
where $X_i$ are independent Bernoulli random variables with parameters $1/i$, such that $\mathrm{Diam}_n\geq H_n^{(m)}$. Another direct application of \cite[Exercise 2.9]{boucheron:hal-00794821} implies that, for fixed $m$,

$$ \liminf_n \frac{H_n^{(m)}}{\log(n)}\geq 1 \ \text{a.s.} $$
By our coupling, if $M=m$, 
$\liminf \frac{\mathrm{Diam}_n}{\log(n)}\geq 1$ almost surely. Using that $M$ is finite almost surely concludes the proof.\qedhere

\end{proof}

\begin{figure}[h]
\centering
\includegraphics[width=0.7\textwidth]{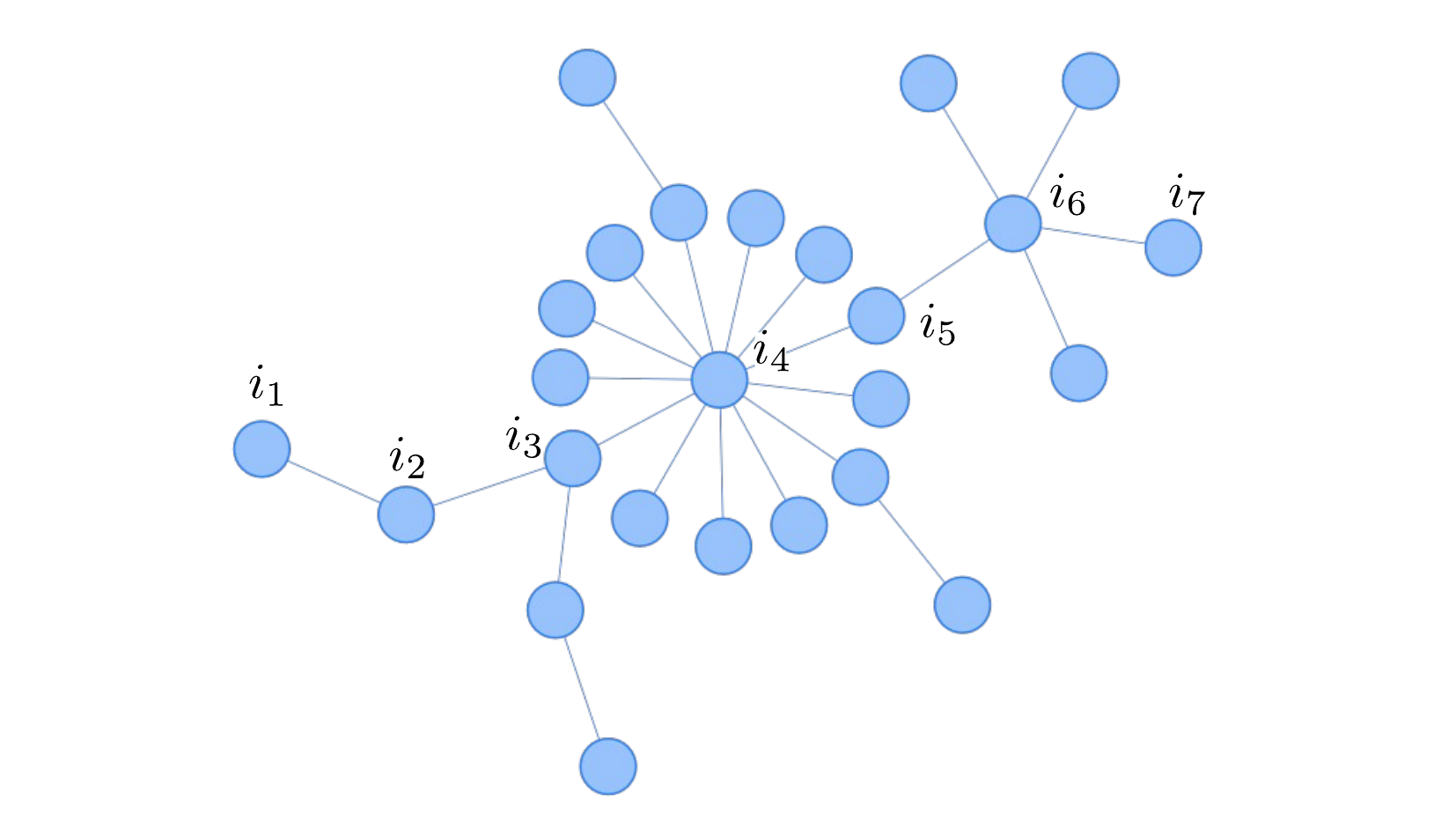}
\caption{Illustration of a RFT of size $25$ and diameter $6$, with vertices of one of the paths of length $6$ highlighted.}
\end{figure}

Next, we prove an upper bound for the diameter.
\begin{lem}\label{lem:lower_bound_diameter}
Almost surely
$$\limsup_n \frac{\mathrm{Diam}_n}{\log(n)}\leq 4e ~.$$
\end{lem}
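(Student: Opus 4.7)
The plan is to couple the random friend tree $(T_n)_{n\ge 2}$ to a uniform random recursive tree $(T'_n)_{n\ge 2}$ driven by the same sequence of uniform targets $(V_n)_{n\ge 2}$, establish a deterministic comparison between the depths of every vertex in the two trees, and invoke Pittel's theorem that the height of a URRT is a.s.\ asymptotic to $e\log n$ (\cite{Pittel_height_trees}).

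Concretely, I would build $(T'_n)_{n\ge 2}$ by setting $T'_2=T_2$ and, for each $n\ge 2$, attaching vertex $n+1$ directly to $V_n$ in $T'_n$ (rather than to the uniform neighbour $W_n$ of $V_n$ used for the RFT). The resulting sequence is a URRT started from the edge $\{1,2\}$, an initial configuration that agrees with the usual URRT construction (since in a URRT vertex $2$ necessarily attaches to vertex $1$) and therefore does not affect Pittel's asymptotic constant.

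The key step is to show by induction on $v\ge 1$ that the graph distance $h(v)$ from $v$ to vertex $1$ in the RFT is at most twice the corresponding distance $h'(v)$ in the coupled URRT. The base cases $h(1)=h'(1)=0$ and $h(2)=h'(2)=1$ are immediate. For the inductive step, because $W_n$ is a neighbour of $V_n$ in $T_n$, $|h(W_n)-h(V_n)|=1$, and hence
\[
h(n+1)=h(W_n)+1 \le h(V_n)+2 \le 2h'(V_n)+2 = 2h'(n+1),
\]
using the induction hypothesis together with the URRT recursion $h'(n+1)=h'(V_n)+1$.

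Taking maxima over $v\in[n]$ yields $\max_{v\in [n]} h(v)\le 2\max_{v\in [n]}h'(v)$. Since in any tree the diameter is at most twice the maximum depth from a fixed vertex, $\mathrm{Diam}_n\le 2\max_{v\in [n]}h(v)\le 4\max_{v\in [n]}h'(v)$, and Pittel's theorem then gives $\limsup_n \mathrm{Diam}_n/\log n\le 4e$ almost surely. I do not foresee a serious obstacle: the coupling is the natural one, the depth inequality is a one-line induction, and Pittel's URRT height asymptotics can be used as a black box. The only subtle point is the need to compare depths in two \emph{different} trees built on the same vertex set, and the trick that makes this work is that because $W_n$ is forced to be adjacent to $V_n$ in the RFT, it can differ in depth from $V_n$ by at most one, which costs a factor of at most two overall.
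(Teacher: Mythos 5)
Your proposal is correct and takes essentially the same route as the paper: couple the RFT and the URRT through the same uniform targets $(V_n)$, bound depths in the RFT by twice the depths in the URRT via a one-line induction, and invoke the $e\log n$ height asymptotic for the URRT. The paper phrases the coupling slightly more generally (Lemma~\ref{lem:coupling_URRT_1} compares all pairwise distances $d_n(i,j)\leq 2d'_n(i,j)$, which it also reuses in the leaf-depth argument), but its proof of the diameter bound likewise reduces to the root-depth comparison $\mathrm{Diam}_n\leq 4\max_i d'_n(1,i)$, so there is no substantive difference.
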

To prove this lemma we couple the random friend tree with the URRT process. This coupling is the subject of the following lemma. 

\begin{lem}\label{lem:coupling_URRT_1}
The random friend tree $(T_n, n\geq 2)$ and the uniform random recursive tree $(T'_n, n\geq 2)$ can be coupled in such a way that for any $i,j\in [n]$,

\[d_n(i,j)\leq 2 d'_n(i,j)~,\]
where $d_n(i,j)$ is the graph distance between vertex $i$ and $j$ in $T_n$, and $d'_n(i,j)$ is the graph distance between vertex $i$ and $j$ in $T'_n$.
\end{lem}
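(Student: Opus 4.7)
The plan is to build the coupling from a shared sequence of uniform choices and then verify that each edge of the URRT corresponds to a path of length at most two in the RFT. Generate i.i.d.\ random variables $V_n$, each uniform on $\{1,\ldots,n\}$, for $n\ge 2$, together with independent uniform variables used to select the neighbour $W_n$. Construct both trees on the same probability space, starting from $T_2 = T'_2 = \{1,2\}$: in the URRT $T'_{n+1}$ we attach vertex $n+1$ to $V_n$, while in the RFT $T_{n+1}$ we attach vertex $n+1$ to $W_n$, where $W_n$ is drawn uniformly at random from $\cN(V_n;T_n)$ using the auxiliary randomness. Under this coupling, both processes still have the correct marginal laws.

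The key observation is that every edge of $T'_n$ is realised as a path of length at most two in $T_n$. Fix $n$ and any edge $\{u,v\}\in E(T'_n)$ with $u<v$. By construction of the URRT we have $u = V_{v-1}$. In the RFT at time $v-1$, the vertex $v$ is attached to $W_{v-1}$, which is by definition a neighbour of $V_{v-1}=u$ in $T_{v-1}$. Since edges are never removed as the tree grows, $W_{v-1}$ is still a neighbour of $u$ in $T_n$, and $v$ is adjacent to $W_{v-1}$ in $T_n$. Hence $u - W_{v-1} - v$ is a path of length (at most) two in $T_n$, so $d_n(u,v)\le 2$.

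To conclude, fix $i,j\in[n]$ and let $i=u_0, u_1,\ldots,u_k=j$ be a geodesic between $i$ and $j$ in $T'_n$, so that $k=d'_n(i,j)$. Applying the edge bound along this path together with the triangle inequality in $T_n$ yields
\[ d_n(i,j) \;\le\; \sum_{\ell=1}^{k} d_n(u_{\ell-1}, u_\ell) \;\le\; 2k \;=\; 2\, d'_n(i,j), \]
which is the desired inequality.

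There is no serious obstacle in this argument; the statement is essentially a bookkeeping exercise once the correct coupling is written down. The only delicate point is to ensure that $W_n$ is chosen conditionally uniformly on $\cN(V_n;T_n)$ using randomness independent of everything else, so that the RFT marginal is preserved while the URRT attachment choice $V_n$ is shared between the two processes.
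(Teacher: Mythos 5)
Your proposal is correct and uses exactly the same coupling as the paper (shared $V_n$, auxiliary independent randomness for $W_n$). The verification differs cosmetically: the paper proceeds by induction on $n$ using the triangle inequality at the newly added vertex, while you observe directly that each URRT edge $\{u,v\}$ with $u<v$ (other than the initial edge $\{1,2\}$, where the distances coincide) maps to the length-two path $u - W_{v-1} - v$ in the RFT and then sum along a URRT geodesic; the two verifications are equivalent, with yours arguably the more transparent packaging of the same idea.
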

\begin{proof}
Note that $T_2=T'_2$ so the statement holds for $n=2$. Fix $m\geq 2$ and suppose that we coupled $(T_2,\dots,T_{m})$ and $(T'_2,\dots,T'_{m})$ such that for all $i,j\in [m]$, $d_{m}(i,j)\leq 2 d'_{m}(i,j)$. Now, sample uniformly at random $V_{m}\in [m]$ and let $W_m$ be a uniform neighbour of $V_m$ in $T_{m}$. Let $T_{m+1}$ be the tree obtained by including vertex $m+1$ and edge $\{W_m, m+1\}$ in $T_{m}$ and let $T'_{m+1}$ be the tree obtained by including vertex $m+1$ and edge $\{V_m, m+1\}$ in $T'_{m}$.
Observe that, for $i,j\in [m]$, $d_{m+1}(i,j)=d_{m}(i,j)$ and $d'_{m+1}(i,j)=d'_{m}(i,j)$. Now, let $i\in [m]$ and compute 
\begin{align*}d_{m+1}(i,m+1)&\leq d_{m+1}(i,V_m)+d_{m+1}(V_m,m+1) =d_m(i,V_m)+2\\
&\leq 2 d'_m(i,V_m)+2 \leq 2 d'_m(i,m),\end{align*}
where we use the triangle inequality, the induction hypothesis and the fact that for all $i\in [m]$, $d'_{m}(i,m)\leq 1+d'_{m}(i,V_m)$. 
\end{proof}

\begin{proof}[Proof of Lemma \ref{lem:lower_bound_diameter}] 
Couple $(T_n, n\geq 1)$ to the uniform random recursive tree $(T'_n,n\geq 1)$ as in Lemma \ref{lem:coupling_URRT_1} and observe that 
\begin{align*} \mathrm{Diam}_n&=\max_{i,j\leq n}d_n(i,j)\leq 2 \max_{i,j\leq n}d'_n(i,j)\leq 4 \max_{i\leq n}d'_n(1,i).
\end{align*}
Moreover, by Corollary $1.3$ of \citet{addario2013poisson}, 
\[\frac{\max_{i\leq n}d'_n(1,i)}{\log n}\to e \text{ almost surely,} \]
which concludes the proof of the lemma.
\end{proof}

\subsection{Leaf-depth}[Proof of Theorems \ref{thm:LeafDepth} and \ref{thm:LeafDepthURRT}]\label{sec:Leafdepth}
Theorem \ref{thm:AbundanceHubs} implies that, asymptotically, almost surely each vertex is at distance at most $1$ from a hub. By Theorem \ref{thm:DegreeConvergence}, each hub has mostly leaf neighbours. This suggests that at large times, most vertices that are not leaves are close to a leaf (distance $1$ or $2$). In this section, we show that at large times, there are exceptional vertices that are much further away from the nearest leaf, namely at distance $\Theta(\log n/\log\log n) $. 
We recall that
\[M_n=\max_{i\leq n}\min_{\ell:D_n(\ell)=1}d_n(i,\ell)\]
is the maximal distance of any vertex to the closest leaf at time $n$, which we refer to as the \emph{leaf-depth} at time $n$. 
To prove Theorem~\ref{thm:LeafDepth}, we first need the following lemma to transfer upper bounds on the leaf-depth in the uniform random recursive trees to upper bounds on the leaf-depth in random friend trees. 

\begin{lem}\label{lem:coupling_URRT}
The coupling defined in Lemma \ref{lem:coupling_URRT_1} between the random friend tree $(T_n, n\geq 1)$ and the uniform random recursive tree $(T'_n,n\geq 1)$ also satisfies that for any leaf $\ell'$ in $T'_n$, in $T_n$, the vertex $\ell'$ is at distance at most $1$ from a leaf.
\end{lem}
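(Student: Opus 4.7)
The plan is to proceed by induction on $n$. The base case $n=2$ is immediate, since $T_2 = T'_2$ is the single edge $\{1,2\}$, whose two endpoints are leaves in both trees. For the inductive step, I fix $n \geq 2$, assume the property at step $n$, and pass to step $n+1$. Recall that the coupling draws $V_n$ uniformly in $[n]$ and $W_n$ uniformly among the neighbours of $V_n$ in $T_n$, and then attaches vertex $n+1$ to $V_n$ in $T'_{n+1}$ and to $W_n$ in $T_{n+1}$; in particular the fresh vertex $n+1$ is a leaf of both $T_{n+1}$ and $T'_{n+1}$.

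Let $\ell'$ be a leaf of $T'_{n+1}$. If $\ell' = n+1$ the conclusion is immediate, so assume $\ell' \in [n]$. Then $\ell'$ was already a leaf of $T'_n$ (degrees only increase as vertices are added), and moreover $\ell' \neq V_n$, since attaching $n+1$ to $V_n$ in $T'_{n+1}$ would otherwise lift the degree of $\ell'$ to at least two. By the inductive hypothesis applied inside $T_n$, either $\ell'$ is itself a leaf of $T_n$, or $\ell'$ has a neighbour $u$ in $T_n$ that is a leaf of $T_n$. In the first case, $\ell'$ remains a leaf of $T_{n+1}$ unless $\ell' = W_n$, in which event the new vertex $n+1$ becomes a leaf neighbour of $\ell'$ in $T_{n+1}$. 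In the second case, $u$ remains a leaf of $T_{n+1}$ unless $u = W_n$; but then $u$'s unique neighbour in $T_n$ would be $\ell'$, forcing $V_n = \ell'$ (since $W_n$ is a neighbour of $V_n$ by construction), contradicting $\ell' \neq V_n$. Either way, $\ell'$ is at distance at most $1$ from a leaf of $T_{n+1}$, completing the induction.

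The main obstacle is the potential destruction of the witnessing leaf by the new attachment: the worry is that the only leaf neighbour $u$ of $\ell'$ in $T_n$ happens to be precisely the vertex $W_n$, which picks up $n+1$ as a new neighbour and so stops being a leaf. The key observation that resolves this is that such a scenario would force $V_n = \ell'$, which is already ruled out by the hypothesis that $\ell'$ is a leaf of $T'_{n+1}$. All remaining cases are handled by the cheap observation that the freshly attached vertex $n+1$ is itself a leaf of $T_{n+1}$ adjacent to $W_n$, so whenever $\ell' = W_n$ the leaf $n+1$ automatically serves as the required nearby leaf.
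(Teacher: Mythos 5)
Your proof is correct and follows essentially the same inductive argument as the paper: you identify a witness leaf in $T_n$ via the inductive hypothesis, observe that only $W_n$ can lose leaf status at the next step, and resolve the two sub-cases (witness equal to $\ell'$ or adjacent to $\ell'$) exactly as the paper does, with the adjacent case ruled out by noting it would force $V_n=\ell'$. The only difference is cosmetic: you keep the indexing consistent with Lemma~\ref{lem:coupling_URRT_1} (attaching $n+1$ via $W_n$), whereas the paper's write-up has a minor index slip writing $W_m$ for what should be $W_{m-1}$.
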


\begin{proof}
The statement clearly holds for $n\leq 2$. Next, suppose that for some $m$ the statement is satisfied in $T_{m-1}$ and $T'_{m-1}$. Fix an $\ell'\leq m$ so that $\ell'$ is a leaf in $T'_m$. We claim that $\ell'$ is at distance at most $1$ from a leaf in $T_m$. First, if $\ell'=m$, then $\ell'$ is also a leaf in $T_m$ and the claim follows. If $\ell'\leq m-1$, then $\ell'$ is also a leaf in $T'_{m-1}$, so by the induction hypothesis,  $\ell'$ is at distance at most $1$ from a leaf $\ell$ in $T_{m-1}$. If $\ell$ is also a leaf in $T_{m}$, the claim follows. Otherwise, for $\ell$ to be a leaf in $T_{m-1}$, but not $T_m$ it is necessary that $W_m=\ell$. If $W_m=\ell$ and $d_{m-1}(\ell,\ell')=1$, then $\ell'$ is the unique neighbour of $\ell$ in $T_{m-1}$ (because $\ell$ is a leaf in $T_{m-1}$), so $V_m=\ell'$, contradicting that $\ell'$ is a leaf in $T'_m$. Thus, if $W_m=\ell$, then $d_{m-1}(\ell,\ell')=0$, meaning that $\ell=\ell'$. Therefore, in $T_m$, vertex $m$ is a leaf that is at distance $1$ from $\ell'$.\qedhere

\end{proof} 
The next lemma gives an upper bound on the leaf-depth in the uniform random recursive tree. Together with Lemma~\ref{lem:coupling_URRT} we deduce an upper bound for the leaf-depth in random friend trees, given in Proposition \ref{prop:upper_bound_leafdepth}. 

\begin{lem}\label{lem:leafdepth_URRT}
   Let $M'_n$ be the leaf-depth in the uniform random recursive tree at time $n$.  
   Then, for any $\varepsilon>0$,
   \[\p{M'_n\geq (1+\varepsilon)\frac{\log n}{\log\log n}}=o(1).\]
\end{lem}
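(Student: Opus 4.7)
The plan is to apply Markov's inequality to the number of vertices whose leaf-depth is at least $k := (1+\varepsilon)\log n/\log\log n$, and to bound each per-vertex probability by analyzing the rooted subtrees of $T'_n$.

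Root $T'_n$ at vertex $1$, and for each vertex $v$ let $T_v^{\geq}$ denote the rooted subtree consisting of $v$ and its descendants. Every leaf of $T_v^{\geq}$ (a descendant of $v$ with no further descendants) is a leaf of $T'_n$, so the leaf-depth of $v$ in $T'_n$ is at most the minimum depth of a leaf of $T_v^{\geq}$ measured from $v$. Moreover, a classical property of the URRT is that, conditionally on its size $m$, the subtree $T_v^{\geq}$ is itself distributed as a URRT of size $m$. Writing $\phi_k(m)$ for the probability that the shallowest leaf in a URRT of size $m$ is at depth at least $k$ from its root, Markov's inequality gives
\[\p{M'_n \geq k} \leq \sum_{v\in[n]} \E{\phi_k(|T_v^{\geq}|)}.\]

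The central combinatorial step is to bound $\phi_k(m)$. I would do so via a greedy descent: starting at the root, at each step move to the child whose subtree is smallest, stopping when a leaf is reached. The key step-estimate is a Chernoff bound on the root degree $D$ of a URRT of size $m$: since $D$ is a sum of independent $\mathrm{Bernoulli}(1/(j-1))$ variables for $j = 2, \ldots, m$ with mean $H_{m-1} \sim \log m$, we have $\p{D < (1-\delta)\log m} \leq m^{-c(\delta)}$ for some $c(\delta)>0$. On the complementary event, the smallest child subtree has size at most $m/((1-\delta)\log m)$. Iterating the recursion $\log m_{i+1} \leq \log m_i - \log\log m_i + O(1)$, after $(1+\varepsilon)\log m/\log\log m$ successful steps the current subtree size drops below $1$, i.e., a leaf has been reached.

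The main obstacle is making the sum over $v \in [n]$ vanish in the limit. The per-step Chernoff bound gives a failure probability that is only polynomial in the current subtree size $m_i$, which may be much smaller than $n$ at later stages of the descent. I plan to address this by a multi-scale argument: divide the greedy descent into phases, where in phase $j$ the current subtree size is reduced from $\sim n^{1/2^j}$ to $\sim n^{1/2^{j+1}}$, ensuring polynomial-in-$n$ per-step failure throughout each phase. Once the current subtree reaches a polylogarithmic size $(\log n)^C$, I would invoke the classical bound of \citet{Pittel_height_trees} on the height of a URRT to conclude that only $O(\log\log n)=o(k)$ further descent steps are needed to hit a leaf. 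Vertices $v$ whose subtree size is smaller than $k$ trivially satisfy leaf-depth less than $k$ and contribute zero to the first moment. Carefully summing these contributions over all phases and all $v$ should yield $\sum_v \E{\phi_k(|T_v^{\geq}|)} \to 0$, which, combined with Markov, proves the lemma.
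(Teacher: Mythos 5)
Your overall framework -- apply Markov (equivalently a union bound) over vertices, reduce to a per-vertex tail bound on the depth of the shallowest leaf in the rooted subtree $T_v^{\geq}$, and bound that tail by analysing a descending path that repeatedly moves to a small child subtree -- is the same architecture as the paper's proof (the paper descends along the "youngest child" canonical path $\cP_n(v)$). The greedy "smallest-subtree" descent is a legitimate variant, and the subtree self-similarity you invoke is indeed a classical URRT property. However, there is a genuine gap in the quantitative step.

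The problem is that your bound on how much the subtree size shrinks per step is far too weak for the Markov/union step to close. The root degree $D$ of a URRT of size $m$ is a sum of independent $\mathrm{Bernoulli}(1/(j-1))$'s with mean $H_{m-1}\sim\log m$, and the best lower-tail Chernoff exponent you can get is strictly less than $1$: in the multiplicative form, $\p{D<\delta\log m}\le m^{-(1-\delta+\delta\log\delta)}$, and $1-\delta+\delta\log\delta<1$ for every $\delta\in(0,1]$, while the standard Chernoff form $m^{-\delta^2/2}$ only gives exponent below $1/2$. So each descent step fails with probability at least $m_i^{-1+o(1)}$ where $m_i$ is the current subtree size. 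You need the cumulative failure probability over the whole descent to be $o(1/n)$ (since you sum over $n$ vertices), but this already fails at the very first phase: descending from $n$ to $\sqrt n$ takes $\Theta(\log n/\log\log n)$ steps and each has failure probability $\ge n^{-c/2}$ with $c<1$, giving a cumulative contribution $\gg 1/n$. Later phases, where $m_i$ is polylogarithmic or smaller, are worse still; and the Pittel endpoint at scale $(\log n)^C$ only gives a failure probability that is polylogarithmic in $n$, again nowhere near $o(1/n)$. The multi-scale decomposition cannot rescue an additive (union-over-steps) error bound whose per-step exponent is $<1$ in the local scale.

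What is actually needed is a multiplicative bound on the path-length tail, not an additive one. The paper achieves this by computing exactly, by induction, that the youngest child subtree of a size-$m$ URRT has $\p{\text{size}\ge\ell}\le 1/\ell$; the event $\{|\cP_n(v)|\ge k\}$ then forces a chain of nested size lower bounds $k,k-1,\ldots,1$, whose probabilities multiply to $1/k!$, which is $o(1/n)$ for $k=(1+\varepsilon)\log n/\log\log n$. Your greedy descent can be salvaged along the same lines: since the smallest child subtree is at most the youngest child subtree, the same tail bound $\p{\text{smallest child subtree}\ge\ell}\le 1/\ell$ holds, and iterating it gives $\phi_k(m)\le 1/k!$. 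But the Chernoff-on-root-degree route, as written, does not work.
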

\begin{proof}
Let $T'_n$ be the uniform random recursive tree at time $n$. For any vertex $v \in T_n'$, we define a canonical path $\cP_n(v)$ in $T'_n$ that ends in a leaf. If $v$ is a leaf in $T'_n$, set $\cP_n(v)=v$. Otherwise, let $w=w_n(v)$ be the neighbour of $v$ in $T'_n$ with the largest label and let $\cP_n(v)$ be $v$ concatenated with $\cP_n(w)$. So, to obtain the path $\cP_n(v)$, start from $v$ and sequentially move to the largest labelled neighbour until reaching a leaf. Define $T'_n(v)$, as the connected component of $v$ in $T'_n$ if the edge between $v$ and its parent was removed (or, equivalently, $T'_n(v)$ is the subtree of $T'_n$ that consists of all vertices that are connected to $v$ by a path on which $v$ is the lowest labelled vertex). Let $|\cP_n(v)|$ be the number of edges on the path. To prove the lemma, it is sufficient to show that $\max_{v\in [n]} |\cP_n(v)|\leq (1+\epsilon)\log n/\log\log n$.\\
First, we check that, for any $\ell\geq 1$, 
\begin{equation}\label{eq:size_youngest_subtree}
\Cprob{|T'_n(w_n(v))|\geq \ell }{ |T'_n(v)|=m}= \begin{cases} \frac{1}{\ell}\text{ if }\ell < m\\ 0 \text{ otherwise,} \end{cases}\end{equation} 
where $m\leq n$. Observe that, conditionally on $|T'_n(v)|=m$, if the vertices in $T'_n(v)$ are assigned labels in $[m]$ that respect the order of the original labels, the resulting tree has the same law as $T'_{m}$. Therefore, \eqref{eq:size_youngest_subtree} follows if, for any $m$, we have that for all $\ell \geq 1$, 
\begin{equation*}
\p{|T'_m(w_m(1))|\geq \ell}= \begin{cases} \frac{1}{\ell}\text{ if }\ell \leq m\\ 0 \text{ otherwise,} \end{cases}\end{equation*} 
where we recall that $T'_m(w_m(1))$ is the subtree rooted at the youngest child of $1$. 

The proof is by induction on $m$. The statement clearly holds for $m=1$. Now, suppose the statement holds for $m=k-1$. For $m=k$, the statement is obvious for $\ell=1$ and $\ell>k$ since $1\leq |T'_k(w_k(1))| \leq k$. 
Observe that, if $V_{k}=1$ (i.e, if vertex $k$ connects to vertex $1$), then $T'_{k}(w_{k}(1))$ consists only of the vertex $k$ in which case $|T'_{k}(w_{k}(1))|=1$. If $V_k\in  T'_{k-1}(w_{k-1}(1))$, then $T'_{k}(w_{k}(1))$ is composed of the vertices of $T'_{k-1}(w_{k-1}(1))$ and vertex $k$, so $|T'_{k}(w_{k}(1))|=|T'_{k-1}(w_{k-1}(1))|+1$. If $V_k \notin \{1\}\cup T'_{k-1}(w_{k-1}(1))$ then $T'_{k}(w_{k}(1))=T'_{k-1}(w_{k-1}(1))$, giving $|T'_{k}(w_{k}(1))|=|T'_{k-1}(w_{k-1}(1))|$. Therefore, for $1<\ell\leq k $,
\begin{align*}
    \set{|T'_k(w_k(1))|\geq \ell}=& \set{\set{|T'_{k-1}(w_{k-1}(1))|\geq \ell}\cap \set{V_k\neq 1 }}\\&\cup \set{\set{|T'_{k-1}(w_{k-1}(1))|=\ell-1}\cap \set{V_k\in T'_{k-1}(w_{k-1}(1)) }}.
\end{align*} 
By the induction hypothesis, for $\ell < k$, 
\begin{align*}
    \p{|T'_k(w_k(1))|\geq \ell}
    =\frac{1}{\ell}\frac{k-1 }{k}+\frac{1}{\ell(\ell-1)}\frac{\ell-1}{k}=\frac{1}{\ell},
\end{align*}
and for $\ell=k$,
\begin{align*}
    \p{|T'_k(w_k(1))|\geq k }
    =0+\frac{1}{k-1}\frac{k-1}{k}=\frac{1}{k}.
\end{align*}
The equality \eqref{eq:size_youngest_subtree} follows for all $m$. 
Now, define $w^{(1)}=w_n(v)$ the vertex at distance $1$ from $v$ on $\cP_n(v)$ and $w^{(\ell)}=w_n(w^{(\ell-1)})$ the vertex at distance $\ell$ from $v$ on  $\cP_n(v)$. Then,
\[ \left\{ |\cP_n(v)|\geq k \right\}=\{|T'_n(w^{(1)})|\geq k\}\cap \{|T'_n(w^{(2)})|\geq k-1\}\cap \dots \cap \{|T'_n(w^{(k)})|\geq  1\}.\]
Together with \eqref{eq:size_youngest_subtree}, this implies that 
\[\p{|\cP_n(v)|\geq k}\leq\frac{1}{k!}\leq \frac{e^k}{k^k} .\]
Then, fix $\varepsilon>0$ and subsitute $(1+\varepsilon)\frac{\log n}{ \log\log n}$ to $k$. The above equation directly implies that
\[\p{|\cP_n(v)|\geq (1+\varepsilon)\frac{\log n}{ \log\log n}}=o(n^{-1}) .\]
Finally, a union bound implies Lemma~\ref{lem:leafdepth_URRT}.\qedhere
\end{proof}

From Lemmas~\ref{lem:coupling_URRT}and~\ref{lem:leafdepth_URRT} we obtain an upper bound on the leaf depth in random friend trees.
\begin{prop}\label{prop:upper_bound_leafdepth}
For any $\varepsilon>0$
   \[\p{M_n\geq (2+\varepsilon)\frac{\log n}{\log\log n}}=o(1).\]
\end{prop}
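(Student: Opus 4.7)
The plan is to combine Lemmas~\ref{lem:coupling_URRT_1} and~\ref{lem:coupling_URRT} to convert an upper bound on the leaf-depth of the URRT (Lemma~\ref{lem:leafdepth_URRT}) into an upper bound on the leaf-depth of the random friend tree. Throughout, we work on the coupled probability space on which both conclusions hold, and the vertex set $[n]$ is common to $T_n$ and $T'_n$.

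Fix an arbitrary vertex $i\in [n]$. Let $\ell'\in [n]$ be a leaf of $T'_n$ that minimises $d'_n(i,\ell')$, so that $d'_n(i,\ell')\leq M'_n$. By Lemma~\ref{lem:coupling_URRT}, there exists a leaf $\ell$ of $T_n$ with $d_n(\ell',\ell)\leq 1$. By Lemma~\ref{lem:coupling_URRT_1}, $d_n(i,\ell')\leq 2\,d'_n(i,\ell')\leq 2M'_n$. Combining these via the triangle inequality yields
\[
d_n(i,\ell)\;\leq\; d_n(i,\ell')+d_n(\ell',\ell)\;\leq\; 2M'_n+1.
\]
Since $i$ was arbitrary, this gives the deterministic bound $M_n\leq 2M'_n+1$ on the coupled space.

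Now fix $\varepsilon>0$. By Lemma~\ref{lem:leafdepth_URRT} applied with $\varepsilon/3$ in place of $\varepsilon$, we have
\[
\p{M'_n\geq \Bigl(1+\tfrac{\varepsilon}{3}\Bigr)\frac{\log n}{\log\log n}}=o(1).
\]
For $n$ large enough, $2\bigl(1+\tfrac{\varepsilon}{3}\bigr)\frac{\log n}{\log\log n}+1\leq (2+\varepsilon)\frac{\log n}{\log\log n}$, so the event $\{M_n\geq (2+\varepsilon)\log n/\log\log n\}$ is contained in the event $\{M'_n\geq (1+\tfrac{\varepsilon}{3})\log n/\log\log n\}$, which has vanishing probability. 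This yields the proposition.

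The only genuine content is the deterministic inequality $M_n\leq 2M'_n+1$ on the coupled space; the rest is a direct quotation of the URRT leaf-depth bound. There is no real obstacle here, since both lemmas are already in hand.
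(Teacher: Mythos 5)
Your proof is correct and follows essentially the same route as the paper: invoke the couplings of Lemmas~\ref{lem:coupling_URRT_1} and~\ref{lem:coupling_URRT} to derive the deterministic bound $M_n \leq 2M'_n + 1$ on the coupled space, then conclude from the URRT leaf-depth bound of Lemma~\ref{lem:leafdepth_URRT}. The paper phrases the inequality chain in terms of nested minima rather than fixing the optimal leaf $\ell'$ explicitly, but the content is identical.
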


\begin{proof}
Couple the random friend tree $(T_n, n\geq 1)$ and the uniform random recursive tree $(T'_n,n\geq 1)$ as in Lemma~\ref{lem:coupling_URRT}. Then, fix $i\leq n$ such that for $N'_n(i)$, the degree of vertex $i$ in $T'_n$, we see that 
\begin{align*}
    \min_{\ell:D_n(\ell)=1} d_n(i,\ell) & {\leq \min_{\ell:D_n(\ell)=1}\min_{\ell':N'_n(\ell')=1} \left(d_n(i,\ell')+  d_n(\ell', \ell)\right)}\\
    & \leq \min_{\ell':N'_n(\ell')=1} d_n(i,\ell')+1 \\
    &\leq \min_{\ell':N'_n(\ell')=1}2d'_n(i,\ell')+1,
\end{align*}
where the last two inequalities follow from the properties of the coupling. By taking the maximum over $i\in [n]$, we have $M_n \leq 2 M_n'+1$ and so Lemma \ref{lem:leafdepth_URRT} implies the proposition. \qedhere
\end{proof}

To conclude the proof of Theorem \ref{thm:LeafDepth} it remains to prove an asymptotic lower bound for $M_n$. In order to show that the leaf depth is at least of order $\log n/ \log \log n$ we first present a proof of the corresponding result for the URRT. To the best of our knowledge, this result does not appear elsewhere. Moreover, the proof is less technical but has the same structure as its counterpart for random friend trees, so it is a good way to introduce the ideas needed for our main proof. In doing so, we hope that the technicalities in the proof of Lemma \ref{prop:leafdepth} are easier to understand.
\begin{prop}\label{prop:leafdepthURRT}
    Let $M_n'$ be the leaf-depth in the uniform random recursive tree at time $n$. Then, $M'_n=\Omega_p(\log n/\log\log n)$.
\end{prop}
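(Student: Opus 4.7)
The plan is to show that for any prescribed $\eps>0$, there is a constant $c=c(\eps)>0$ with $\p{M'_n\geq \lfloor c\log n/\log\log n\rfloor}\geq 1-\eps$ for $n$ large, which establishes $M'_n=\Omega_p(\log n/\log\log n)$. Setting $k=\lfloor c\log n/\log\log n\rfloor$, the goal is to exhibit a vertex $v^*\in T'_n$ whose graph-distance ball of radius $k-1$ in $T'_n$ contains no leaf of $T'_n$.

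Small labels are essentially never leaves: a telescoping computation (of the form $\prod_{j=i+1}^n(1-1/(j-1))=(i-1)/(n-1)$) yields that vertex $i$ is a leaf of $T'_n$ with probability exactly $(i-1)/(n-1)$, since $i$ is a leaf iff no later vertex selects it as a parent. Taking $m=\lfloor n^{1/3}\rfloor$ and applying a union bound, every vertex of label $\leq m$ is a non-leaf of $T'_n$ with probability $1-O(m^2/n)=1-o(1)$. Moreover the subgraph of $T'_n$ induced on $[m]$ is itself a URRT $T'_m$ of size $m$. By the classical height result for URRTs (analogous to the bound invoked in Lemma~\ref{lem:lower_bound_diameter}), with probability $1-o(1)$ the tree $T'_m$ contains a downward chain $v_0=1,v_1,\ldots,v_{2k}$ in which each $v_{i+1}$ is a child of $v_i$; I set $v^*=v_k$, so that all $2k+1$ chain vertices are non-leaves of $T'_n$.

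It remains to verify that no vertex of label $>m$ in $B_{T'_n}(v^*,k-1)$ is a leaf of $T'_n$. Any such putative bad vertex $u$ must lie in a subtree hanging off some chain vertex $v_i$ with $|i-k|<k$, reached by a path of length at most $k-1-|i-k|$ through labels $>m$. Conditional on $T'_m$, late-labelled vertices attach independently and uniformly, so that the expected number of length-$\ell$ late-label paths from $v_i$ to a fixed $u>m$ equals $(\log(u/m))^{\ell-1}/((u-1)(\ell-1)!)$, and the event ``$u$ is a leaf'' has conditional probability $(u-1)/(n-1)$ and is independent of the path event (as it depends only on attachments of labels $>u$). To obtain the required estimate, I would refine the construction by additionally requiring that each chain vertex $v_i$ has many children in $T'_m$ with polynomially large subtrees (using that in a URRT of size $N$ the root has $\Omega(\log N)$ children with subtree of size $\Omega(N/\log^{O(1)}N)$ with probability bounded below). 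A multi-scale conditioning argument then shows that with positive probability, every off-chain branch within distance $k-1$ of $v^*$ is so ``bushy'' that it contains no shallow leaves, yielding $v^*$ as desired.

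The main obstacle is the last step: a plain first-moment bound for the number of off-chain leaves in $B_{T'_n}(v^*,k-1)$ gives an estimate of order $n^{o(1)}$ rather than $o(1)$, so a direct Markov argument fails. Overcoming this requires sharper multi-scale estimates that exploit both the independence of the late-vertex attachments given $T'_m$ and the subtree-size profile along the chain, with care to cancel the factor $(u-1)$ from the leaf probability against the $1/(u-1)$ from the path-multiplicity computation. This trade-off between path multiplicities and leaf probabilities is precisely the technical heart of the argument and serves as the (simpler) template for the analogous lower bound for the random friend tree proved in Lemma~\ref{prop:leafdepth}.
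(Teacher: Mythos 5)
Your proposal is incomplete, and you acknowledge this yourself: the ``multi-scale conditioning argument'' you sketch in the last step is, as you write, ``precisely the technical heart of the argument'' and is not carried out. Beyond the missing details, however, there is a more fundamental problem with the choice of $v^*$. You place $v^*$ at the midpoint of a deep ancestral chain in $T'_m$ with $m=\lfloor n^{1/3}\rfloor$, so every chain vertex $v_i$ has label at most $m$. But each such vertex receives $\Theta(\log n)$ children with labels in $(m,n]$, and a direct computation (for $j>v_i$, the events ``$j$ attaches to $v_i$'' and ``$j$ is a leaf of $T'_n$'' are independent, with probabilities $1/(j-1)$ and $(j-1)/(n-1)$) shows that the expected number of \emph{leaf} children of $v_i$ is approximately $1$, roughly independently across $i$. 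Summing over the $\Theta(k)$ chain vertices within distance $k/2$ of $v^*$, with high probability at least one of them has a leaf child, placing a leaf at distance $O(k)$ from $v^*$ — in fact one expects a leaf within $O(1)$ of $v^*$. Your proposed fix (requiring chain vertices to have many children with polynomially large subtrees) does not address this: a vertex can simultaneously have several large-subtree children \emph{and} a leaf child, and it is the leaf children that kill the construction. In short, any vertex of label $O(n^{1/3})$ is, with high probability, within bounded distance of a leaf, so no vertex near the root can serve as $v^*$.

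The paper's proof takes a structurally different route that avoids this obstruction entirely. Rather than searching near the root, it looks for a long path of consecutive degree-two vertices ending at a leaf (a ``pendant chain''), and uses the elementary observation that the midpoint of such a path of length $P'_n$ is at distance at least $P'_n/2$ from every leaf — there is no need to control side branches because, by construction, there are none. Such chains are created among recently-arrived vertices: one starts from $\Theta(n)$ leaves of $T'_{n/2}$, tracks the (rare) event that each accretes a degree-two chain between times $n/2$ and $n$ without being ``hit'' from the side, couples this to a balls-in-bins process, and finishes with a second-moment argument to show that with high probability at least one of the $\Theta(n)$ candidate chains grows to length $\Theta(\log n/\log\log n)$. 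The key difference is that the paper's target structure (a bare chain) has zero off-path vertices by construction, which is exactly what your ball-around-$v^*$ approach struggles to achieve, since near the root the tree is too bushy for any ball of radius $\omega(1)$ to be leaf-free.
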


\begin{proof}
Let $T_n'$ be the uniform random recursive tree at time $n$. In a uniform random recursive tree, the number of vertices with degree at least $3$ goes to infinity almost surely (see \citet{janson2005asymptotic}). We call such a vertex a branch point. Therefore, we can choose $n$ sufficiently large such that there is at least one branch point. Let $P'_n$ be the maximal distance of any leaf in $T_n'$ to the nearest branch point, that is,
\[P'_n=\max_{\ell: N'_n(\ell)=1}\min_{j:N'_n(j)\geq3} d'_n(\ell,j).\]
We remark that $M'_n\geq P'_n/2$ because a midpoint of a longest leaf-to-branchpoint path is at distance at least $P'_n/2$ from the nearest leaf.  It is well known (see \citet{janson2005asymptotic}) that the proportion of leaves in an URRT tends to $1/2$, consequently $X'^{\geq 2}_m/m\to 1/2$ almost surely. Thus, for $\eps>0$ and $n$ sufficiently large, 
$$\p{1-\frac{X'^{\geq 2}_k}{k} \leq \frac{1}{3}; \  \forall k \geq n }> 1 - \eps.$$
Now, condition on the number of leaves at time $n/2$ being at least $n/6$, that is $X'^{1}_{n/2} \geq n/6$, and let $\{v_1,\cdots,v_{P}\}$ be an arbitrary set of $P = n/6$ leaves of $T_{n/2}'$. We study the subtrees rooted at $v_i$ and will show that at time $n$, with high probability, for at least one $i\in [P]$ the subtree of $v_i$ contains a path from $v_i$ ending in a leaf and solely consisting of $\Omega(\log n/\log\log n)$ vertices of degree two. To this end, we say that a path consisting of degree two vertices that ends in a leaf \emph{grows} at time $m$ if vertex $m+1$ attaches to the leaf at the end of the path. This increases the length of the path by $1$. We say that a path consisting of degree two vertices that ends in a leaf \emph{dies} at time $m$ if vertex $m+1$ connects to a degree-two vertex on the path that is at distance at most $K$ from the leaf. That is, we only keep track of paths up to distance $K$ from the leaf. Note that at each time step only one path can grow or die. We will only track paths until their first death. Note that, at time $m$, for $i\in [P]$, conditionally on the path rooted at $v_i$ has not died yet, the probability of the path growing is $1/m$ and the probability of the path dying is at most $K/m$. Observe that $P_n'$ stochastically dominates the minimum of $K$ and the length of the longest path at time $n$ rooted at some $v_i$ for $i\in [P]$ that has not died. We can then couple these path growth processes to a balls-in-bins model as follows. Let $P$ be the number of bins. The process is started at time $n/2$ with $P$ empty bins. For each time $m\in [n/2,n]$, with probability $P\cdot K/m$ add a black ball to a uniform random bin, or with probability $P/m$ add a white ball to a uniform random bin. By doing so, a black ball is added to a given bin with probability $K/m$ and a white ball is added to a given bin with probability $1/m$. We further see that $P_n'$ stochastically dominates the smallest number between K and the maximum number of white balls at time $n$ in a bin with zero black balls. It therefore suffices to show that for some $c>0$ and $K:= c\log n/\log \log n$, for $n$ large enough, with high probability, one of the $P$ bins contains at least $K$ white balls and no black balls. Observe that, for $n$ sufficiently large, between times $n/2$ and $n$, with probability at least $1-\epsilon$, at most $B=2PK$ black balls and at least $W = P/8$ white balls are added. Conditioned on this event, the probability that a specific bin contains at least $K$ white balls is at least 
\begin{align*}
    {W \choose{K}} P^{-K}(1-P^{-1})^{W-K} > e^{-1}\left(\frac{W}{PK}\right)^K = e^{-1}\left(\frac{1}{8K}\right)^K,
\end{align*}
for large $n$, where we bound $(1-P^{-1})^{W-K} \geq (1-P^{-1})^{W} = ((1-6/n)^{n/6})^{1/8} > e^{-1}$. Therefore the expected number of bins containing at least $K$ white balls can be bounded from below by 
$$P\cdot e^{-1}(8K)^{-K}= e^{-1} \frac{n}{6}\left(\frac{\log \log n}{8 c\log n}\right)^{c\log n/\log \log n}> n^{1-c}$$
for large $n$.

Then, if $c<1/2$, since the numbers of white balls in two distinct bins have negative covariance, Chebyshev's inequality bounds the probability that the number of paths with at least  $K$ growth events is less than $n^{1-2c}$ tends to $0$, so in particular is smaller than $\epsilon$ for $n$ sufficiently large. Any of these has $0$ black balls with probability at least $(1-P^{-1})^{B}\geq e^{-3K}=\omega(n^{1-2c})$, so another straightforward application of the second moment method implies that for $n$ large enough, for any $c<1/2$, with probability at least $1-3\epsilon$ at time $n$, there is a bin containing $c\log(n)/\log\log(n)$ white balls and no black balls. Recalling the stochastic dominance, that is valid on an event of probability at least $1-\epsilon$, with probability at least $1-4\epsilon$ there is a leaf at distance at least $c \log n/\log\log n$ from the nearest branch point. \qedhere
\end{proof}

Using a similar proof, we can prove an asymptotic lower bound for the leaf-depth in random friend trees. 
\begin{prop}\label{prop:leafdepth}
     Let $M_n$ be the leaf-depth in the random friend tree at time $n$. Then,
    $M_n=\Omega(\log n/\log\log n)$ in probability.
\end{prop}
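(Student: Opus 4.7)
The plan is to follow the template of the URRT lower bound (Proposition~\ref{prop:leafdepthURRT}), adapting it to the RFT dynamics. The central object is a \emph{chain}: a path of degree-two vertices ending in a leaf. The midpoint of a chain of length $K$ is at distance at least $K/2$ from the nearest leaf, so it suffices to exhibit a chain of length $K = c\log n/\log\log n$ in $T_n$ for some positive constant $c$, with high probability.

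I would construct chains from ``seed events'': a seed at time $m$ is a \emph{promotion event}, i.e., a time at which $W_m$ happens to be a leaf in $T_m$, so that attaching $m+1$ creates a fresh chain $p - W_m - (m+1)$ with $p$ the original parent of $W_m$, $W_m$ now of degree 2, and $m+1$ a leaf. Since each vertex of degree $\geq 2$ in $T_n$ arises from a unique promotion event, the total number of seeds up to time $n$ equals $X_n^{\geq 2}$, which is at least $n^\delta$ almost surely for some $\delta > 0.1$ by Theorem~\ref{thm:SmallDegreeVertices}. For each seed I would track the associated path $p - v_0 - v_1 - \cdots - v_\ell$ (with $v_0 = W_m$, $v_1 = m+1$) for as long as all $v_0, \ldots, v_{\ell-1}$ retain degree 2. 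Because $v_{\ell-1}$ then has degree exactly 2, the chain grows at time $m'$ with probability \emph{exactly} $1/(2m')$ (via $V_{m'}=v_{\ell-1}$, $W_{m'}=v_\ell$); and summing attachment probabilities over the internal vertices shows that the chain dies at rate at most $(\ell + O(1))/m' \leq (K+1)/m'$ while $\ell \leq K$. In the embedded jump chain, the probability of $K$ consecutive growths is $\prod_{\ell=1}^K (2\ell+3)^{-1} \sim (2K)^{-K}$, which for $K = c\log n/\log\log n$ is of order $n^{-c-o(1)}$.

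To upgrade the expected count to a high-probability statement, I would couple the joint chain process to a uniform balls-in-bins scheme in which, at each time $m'$, each of the $P = n^{\delta - o(1)}$ bins receives a white ball with total probability $P/(2m')$ (growth) and a black ball with total probability $P(K+1)/m'$ (a conservative upper bound on death), mutually exclusively across bins. The coupling is valid because the growth and death events for distinct seeds are mutually exclusive (at each step only vertex $m'+1$ is added) and their chains are generically vertex-disjoint; a bin collecting $\geq K$ whites and $0$ blacks before time $n$ then corresponds to a successful chain of length $\geq K$ in $T_n$. The expected number of successful bins is $\geq n^{\delta - c - o(1)}$, which diverges for $c < \delta$ sufficiently small. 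Because bin counts in uniform balls-in-bins are negatively correlated for both the ``many whites'' and ``no blacks'' events, Chebyshev's inequality then guarantees that at least one bin succeeds with probability tending to 1, yielding $M_n \geq K/2 = \Omega(\log n/\log\log n)$ in probability.

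The main technical obstacles are (i) the heterogeneity of seed initiation times (in the URRT proof all seeds start at $n/2$, whereas here they appear throughout $[1,n]$), which forces a dyadic decomposition over scales $[n 2^{-k-1},\, n 2^{-k}]$ combined with the lower bound $X_m^{\geq 2} \geq m^\delta$ at each scale to identify sufficiently many ``effective'' seeds with non-vanishing per-seed success probability; and (ii) ensuring that a chain, once having reached length $K$, is not truncated by a later death before time $n$, which I would handle as in the URRT proof by only counting attachments within the last $K$ vertices of the leaf-end as ``deaths''. Subsidiary points, such as the mild dependence of the death rate on $D_{m'}(p)$ (negligible when $p$ is a hub and sub-dominant otherwise) and the possibility of one seed's chain extending another's (a rare overlap that can be absorbed into a covariance bound), are routine once the overall framework is in place.
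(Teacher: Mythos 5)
Your overall framework — tracking paths of degree-two vertices ending in a leaf, treating growth and death events as white and black balls in a balls-in-bins scheme, and applying a second-moment argument — is exactly the paper's strategy. The gap is in your seeding step. You count seeds by ``promotion events'' and invoke $X_n^{\geq 2}\geq n^{\delta}$ from Theorem~\ref{thm:SmallDegreeVertices} to claim polynomially many seeds. But the per-seed success probability decays super-polynomially in the seed's age: a chain born at time $t_0$ accumulates death hazard of order $K\log(n/t_0)$ over $[t_0,n]$, so the no-death probability is roughly $(t_0/n)^{K}$, which for $t_0\leq n^{1-\eps}$ and $K=\Theta(\log n/\log\log n)$ is $n^{-\Theta(\log n/\log\log n)}$. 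Only seeds born within a $\mathrm{polylog}(n)$ factor of $n$ have per-seed success probability $\geq K^{-K(1+o(1))}$, and the number of such seeds is an \emph{increment} $X_n^{\geq 2}-X_{n/\mathrm{polylog}(n)}^{\geq 2}$. Theorem~\ref{thm:SmallDegreeVertices} gives $n^{\delta}\ll X_n^{\geq 2}\ll n^{\lambda}$ with $\delta<\lambda$, so this increment is \emph{not} controlled by the total count: it could a priori be as small as zero (the lower bound $n^{\delta}$ could have been ``used up'' before $n/\mathrm{polylog}(n)$). Your proposed remedy — dyadic scales ``combined with the lower bound $X_m^{\geq 2}\geq m^{\delta}$ at each scale'' — has the same problem: that bound constrains the running total at each scale, not how much of the total was contributed by any particular scale.

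What closes this gap in the paper is a dedicated drift argument (the first half of the paper's proof of Proposition~\ref{prop:leafdepth}): it tracks $H_m$, the number of leaves attached to degree-two vertices, and uses Lemma~\ref{lem:atleast2_vs_atleast3} together with $X_m^{\geq 3}\geq m^{\delta}$ to show that $\Delta H_m$ has positive drift of order $n^{\delta-1}$ on $[n/4,n/2]$ whenever $H_m$ is small, so that $H_{n/2}\geq n^{\delta/2}$ with high probability. This produces $n^{\delta/2}$ \emph{simultaneously live} seeds at the single time $n/2$, after which your balls-in-bins analysis goes through essentially as you wrote it. Your promotion-event bookkeeping is a nice idea — each promotion does create exactly the configuration (a leaf attached to a degree-two vertex) you need — but restricting to seeds that are still alive at time $n/2$ recovers precisely the set $\mathcal{L}_{n/2}$ with $|\mathcal{L}_{n/2}|=H_{n/2}$, so there is no way around lower-bounding $H_{n/2}$; that drift lemma is genuinely needed and is the missing ingredient in your proposal.
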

\begin{proof}

By Theorem~\ref{thm:SmallDegreeVertices}, $X_n^{\geq 3}\to \infty$ almost surely, so we may pick $n$ large enough such that $X_n^{\geq 3}\geq 1$. Then, define 
\[P_n=\max_{\ell: D_n(\ell)=1}\min_{j:D_n(j)\geq3} d_n(\ell,j)\] 
to be the largest distance from a leaf to the nearest branch point in $T_n$. We see that $M_n\geq P_n/2 $ because the midpoint of the longest leaf-to-branchpoint path is at distance at least $P_n/2$ from the nearest leaf. Therefore, it suffices to show that $P_n=\Omega(\log n/\log\log n)$ in probability. 

As in Proposition \ref{prop:leafdepthURRT} the proof is divided into two steps. We first show that, for $n$ large enough, with high probability at time $n/2$ there are at least $n^{\delta/2}$ leaves attached to a degree two vertex. Remark that this step requires slightly more work than the URRT case. For the URRT, one only needs to ensure the presence of many leaves at time $n/2$ to guarantee that that a long path of subsequent degree-two vertex will emerge from one of these leaves. For the random friend tree, one needs to ensure that, at time $n/2$, there exists many leaves attached to a degree two vertex,
 to guarantee that new vertices attach to these leaves with some sufficient probability. This difference is due to the attachment rule in random friend trees. Denote by $\cL_{n}$ the set of leaves attached to a vertex of degree two at time $n$ and let $H_n=|\cL_{n}|$ be the number of leaves attached to a degree two vertex in $T_n$. In the second part of the proof, we show that it is likely that at time $n$, a path of subsequent degree two vertices longer than $c\log n/\log\log n $ grows from at least one of the leaves of $\cL_{n/2}$, that is,
\[\max_{ \ell \in \cL_{n}}\min_{j:D_n(j)\geq3} d_n(\ell,j)>c\frac{\log n}{\log\log n}~,\]
with high probability. Lemma \ref{lem:atleast2_vs_atleast3} states that 
\[ \Cexp{\Delta X_n^{\geq 2}}{T_n}\geq \frac{1}{3n}X_n^{\geq 3}~,\]
and by Theorem~\ref{thm:SmallDegreeVertices}, $n^{-\delta}X_n^{\geq 3}\to \infty$ almost surely, for $0.1 < \delta < 0.9$. Therefore, for fixed $\varepsilon>0$ and sufficiently large $n$,
$$\p{\frac{1}{3m} X_m^{\geq 3}>m^{-(1-\delta)}\ \forall m\geq n/4}\geq 1-\epsilon~.$$
It follows that, for all $n/4\leq m \leq n/2$,
\[\Cprob{\Delta X_m^{\geq 2}=1}{  \frac{1}{3m} X_m^{\geq 3}>m^{-(1-\delta)}}\geq  n^{-(1-\delta)}\]
because $m^{-(1-\delta)}\geq n^{-(1-\delta)}$ if $m\leq n/2$.  

We will show that $H_n$ grows polynomially in probability. Note that for every vertex $v \in \cL_m$, $\p{W_m=v}=1/(2m)$ and $\Delta H_n =1 $ if and only if $W_m$ is a leaf in $T_m$ that is not in $\cL_m$. Further, $\Delta X^{\ge 2}_m = 1$ if and only if $W_m$ is a leaf in $T_m$. By summing over all possible values of $W_m$ we therefore get the lower bound 
\begin{align*}
    & \Cprob{\Delta H_m=1}{ H_m=k,\ \frac{1}{3m} X_m^{\geq 3}>m^{\delta-1}}\\
    &\ge  \Cprob{\Delta X_m^{\geq 2}=1}{ \frac{1}{3m} X_m^{\geq 3}>m^{-(1-\delta)}}-\frac{k}{2m}\ge n^{\delta-1}-\frac{2k}{n}~,
\end{align*}
for $n/4\le m \le n/2$. Therefore, either $H_m \geq n^{\delta} /16$ or $\Cprob{\Delta H_m = 1}{ \frac{1}{3m} X_m^{\geq 3}>m^{-(1-\delta)}}$ $\geq \frac{7}{8}n^{-(1-\delta)}$ for $n/4\le m \le n/2$.

Finally, $\Delta H_m=-1$ if $W_m$ is a degree two vertex attached to a leaf, and there are exactly $H_m$ such vertices in $T_m$. For $v$ a given vertex of degree two attached to a leaf, $\p{W_m=v}\leq 2/m\leq 8/n$ for $n/4\leq m \leq n/2$. By a union bound, for $n/4\leq m\leq n/2$, 
$$\Cprob{\Delta H_m=-1}{ H_m=k}\leq 8 k/n~.$$ 
By the two arguments above, for $n/4\leq m\leq n/2$,
$$\p{\Delta H_m=-1\mid H_m\leq \frac{1}{16}n^{\delta}}\leq \frac{1}{2}n^{-(1-\delta)}~,$$
and
$$\p{\Delta H_m=1\mid H_m\leq \frac{1}{16}n^{\delta}, \ \frac{1}{3m} X_m^{\geq 3}>m^{\delta-1}}\geq \frac{7}{8}n^{-(1-\delta)}~,$$
Then, it follows that, on the event $\left\{ \forall m \in [n/4,n/2], \ \tfrac{1}{3m} X_m^{\geq 3}>m^{\delta-1}\right\}$, with probability at least $1-\varepsilon$, for $n$ sufficiently large, $H_m$ grows to at least $\frac{1}{17} n^{\delta}$ between times $n/4$ and $n/2$. Finally, for $n$ large enough, $ n^{\delta}/17>n^{\delta/2}$, so
 $$\p{H_{n/2}\geq n^{\delta/2}}\geq 1-2\epsilon~,$$
which concludes the first part of the proof.

We now condition on the event $\{H_{n/2}\geq n^{\delta/2}\}$. We show that, on this event, it is likely that at time $n$ at least one of the leaves of $\cL_{n/2}$ is at distance at least $\log n/\log\log n$ from the nearest branch point. The second part of the proof is identical from the URRT proof, but we present it again for clarity.

Let $\{v_1,\ldots,v_{P}\}$ be an arbitrary set of $P=n^{\delta/2}$ leaves in $\cL_{n/2}$. We study the subtrees rooted at $v_i$ and will show that at time $n$, with high probability, for at least one $i\in [P]$ the subtree of $v_i$ contains a path ending in a leaf and solely consisting of $\Omega(\log n/\log\log n)$ vertices of degree two. To this end, we say that a path consisting of degree two vertices that ends in a leaf \emph{grows} at time $m$ if vertex $m+1$ attaches to the leaf at the end of the path. This increases the length of the path by $1$. We say that a path consisting of degree two vertices that ends in a leaf \emph{dies} at time $m$ if vertex $m+1$ connects to a degree two vertex on the path that is at distance at most $K$ from the leaf. That is, we only keep track of paths of length at most $K$. Note that, at each time step, only one path can grow or die. We will only track paths until their first death. Also note that, at time $m\in[n/2,n]$, for $i\in [P]$, conditionally on the path rooted at $v_i$ has not died yet, the probability of the path growing is $1/(2m)$ and the probability of the path dying is at most $K/m$. Observe that $P_n$ stochastically dominates the minimum of $K$ and the longest path at time $n$ rooted at some $v_i$ for $i\in [P]$ that has not died. We can then couple these path growth processes to a balls-in-bins model. Let $P$ be the number of bins and start the process with $P$ empty bins. For each time $m\in [n/2,n]$, with probability $P\cdot K/m$ add a black ball to a uniform random bin, or with probability $P/(2m)$ add a white ball to a uniform random bin. By doing so, a black ball is added to a given bin with probability $K/m$ and a white ball is added to a given bin with probability $1/(2m)$. We further see that $P_n$ stochastically dominates the smallest number between $K$ and the maximum number of white balls at time $n$ in a bin with zero black balls. It therefore suffices to show that, for some $c>0$ and $K:= c\log n/\log \log n$, with high probability at least one of the $P$ bins contains at least $K$ white balls and no black balls. Observe that, for $n$ sufficiently large, between time $n/2$ and $n$, with probability at least $1-\epsilon$, at most $B=2PK$ black balls and at least $W = P/8$ white balls are added. Conditioned on this event, the probability that a specific bin has at least $K$ white balls is at least

\begin{align*}
    {W \choose{K}} P^{-K}(1-P^{-1})^{W-K} > e^{-1}\left(\frac{W}{PK}\right)^K = e^{-1}\left(\frac{1}{8K}\right)^K,
\end{align*}
for large $n$, where we bound $(1-P^{-1})^{W-K} \geq (1-P^{-1})^{W} = ((1-n^{-\delta/2})^{n^{-\delta/2}})^{1/8} > e^{-1}$. The expected number of bins containing at least $K$ white balls is bounded from below by 

$$P\cdot e^{-1}(8K)^{-K}= e^{-1} n^{\delta/2}\left(\frac{\log \log n}{c\log n}\right)^{c\log n/\log \log n}> n^{\delta/2-c}~,$$
for large $n$.

Then, if $c<\delta/8$, since the numbers of white balls in two distinct bins have negative covariance, Chebyshev's associaton inequality gives that the probability that the number of paths with at least  $K$ growth events is less than $n^{\delta/2-2c}$ tends to $0$, so in particular, is smaller than $\epsilon$ for $n$ sufficiently large. Any of these contains no black balls with probability at least $(1-P^{-1})^B\geq e^{-3K}=\omega (n^{\delta/2-2c})$, so another straightforward application of the second moment method implies that for $n$ large enough, for any $c<\delta/8$, with probability at least $1-3\epsilon$ there is a bin with $c\log(n)/\log\log(n)$ white balls and no black balls at time $n$. Recalling the statistical dominance, that is valid on an event of probability at least $1-2\epsilon$, with probability at least $1-5\epsilon$, there is a leaf at distance at least $c \log n/\log\log n$ from the nearest branch point. \qedhere
\end{proof}

\subsection{High-degree vertices}[Proof of Theorems~\ref{thm:numberofhubs} and \ref{thm:InfinityLinearDegree}]\label{sec:LargeDegreeVertices}

Recall that $Z_v=\liminf_{n\to\infty}\frac{D_n(v)}{n}$, and that by Theorem~\ref{thm:DegreeConvergence}, in fact 
\[Z_v=\lim_{n\to\infty}\frac{D_n(v)}{n}\text{ almost surely.}\]

The following lemma, combined with Theorem~\ref{thm:SmallDegreeVertices}, gives a lower bound on the number of hubs and proves Theorem~\ref{thm:numberofhubs}.
\begin{lem}\label{lem:number_vertices_infinity} Almost surely,
    \[\#\{v\in [n]: Z_v>0 \}>\tfrac{1}{2}X^{\geq2}_n. \]
\end{lem}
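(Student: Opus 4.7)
The plan is to combine Theorem~\ref{thm:AbundanceHubs} with a short edge-counting argument. Write $H_n := \{v \in [n] : Z_v > 0\}$ for the set of hubs. Partition the internal vertices $\{v \in [n] : D_n(v) \geq 2\}$ into $A := \{v \in [n] : D_n(v) \geq 2,\ Z_v = 0\}$ and $B := \{v \in [n] : D_n(v) \geq 2,\ Z_v > 0\}$, so that $X_n^{\geq 2} = |A| + |B|$ and trivially $|B| \leq |H_n|$. By taking a countable intersection over $m \in \mathbb{N}$ of the almost sure events furnished by Theorem~\ref{thm:AbundanceHubs}, I may work on the almost sure event that every edge of $T_n$ has at least one hub endpoint; equivalently, the set $[n] \setminus H_n$ of non-hubs is an independent set in $T_n$.

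The core of the proof is to establish the stronger bound $|H_n| \geq |A| + 1$ on this event. Since no edge has two non-hub endpoints, every edge incident to a non-hub has its other endpoint in $H_n$, and so the number of hub-to-non-hub edges equals
\[
\sum_{v \notin H_n} D_n(v) = \sum_{v \in A} D_n(v) + \bigl|\{v \notin H_n : D_n(v) = 1\}\bigr| \geq 2|A| + \bigl(n - |H_n| - |A|\bigr),
\]
where I use $D_n(v) \geq 2$ for $v \in A$ and the identity $|\{v \notin H_n : D_n(v) = 1\}| = n - |H_n| - |A|$ (the total number of non-hubs is $n - |H_n|$, of which exactly $|A|$ are internal). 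Comparing the count on the left with the total edge count $n - 1$ and rearranging yields $|H_n| \geq |A| + 1$.

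Adding the inequalities $|H_n| \geq |A| + 1$ and $|H_n| \geq |B|$ now gives $2|H_n| \geq |A| + |B| + 1 = X_n^{\geq 2} + 1$, which is the desired strict inequality. There is no real obstacle here: once Theorem~\ref{thm:AbundanceHubs} is invoked to rule out non-hub--non-hub edges, everything reduces to the elementary observation that $T_n$ has only $n - 1$ edges, and the ``$+1$'' in the final bound comes precisely from this deficit.
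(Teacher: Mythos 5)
Your proof is correct, and it takes a genuinely different route from the paper. The paper first proves a standalone graph-theoretic fact (Lemma~\ref{lem:edge_cover_half_non_leaves}): for any tree $t$, a minimum edge cover has size at least $\tfrac12 X^{\geq 2}(t)$, via a decomposition of $t$ into vertex-disjoint leaf-to-root path pieces. It then observes that Theorem~\ref{thm:AbundanceHubs} makes $H_n$ an edge cover of $T_n$, so $|H_n|\ge EC(T_n)\ge \tfrac12 X_n^{\geq 2}$. You instead work directly with $T_n$: after partitioning internal vertices into hub/non-hub parts $A,B$, you use that the non-hubs form an independent set (by the same countable intersection of the almost-sure events from Theorem~\ref{thm:AbundanceHubs}) to double-count edges incident to a non-hub, giving $\sum_{v\notin H_n} D_n(v) \le n-1$, and you exploit $D_n(v)\ge 2$ on $A$ to extract $|H_n|\ge |A|+1$. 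Combined with $|H_n|\ge|B|$ this yields $2|H_n|\ge X_n^{\geq 2}+1$. The two arguments buy slightly different things: the paper's version isolates a reusable lemma about edge covers of arbitrary trees, while yours is shorter, avoids the path decomposition, and delivers the strict inequality cleanly. Indeed, the cited edge-cover lemma as stated only gives $\ge$ rather than $>$, so your ``$+1$'' deficit argument via the $n-1$ edge count is actually a cleaner way to land on the strict inequality claimed in the statement.
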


We prove this lemma using Lemma \ref{lem:edge_cover_half_non_leaves}, below, but we need some additional definitions for its statement. For a graph $G=(V,E)$, we say $V'\subset V$ is an \emph{edge cover} of $G$ if for each $e\in E$, there is a $v\in V'$ such that $v\in e$. 
Define the \textit{minimal edge cover number} of a graph $G=(V,E)$, denoted by $EC(G)$, as follows
\begin{align}\label{def:min_edge_cover}
    EC(G) := \min\{|V'|: V'\text{ is an edge cover of  }G\}.
\end{align}
Lemma~\ref{lem:number_vertices_infinity} is a direct consequence of Theorem \ref{thm:AbundanceHubs}, which states that each edge contains a hub, and the following lemma.
\begin{lem}\label{lem:edge_cover_half_non_leaves}
    For any tree $t$, we have that 
    \[|EC(t)| \geq \tfrac{1}{2}X^{\geq 2}(t),\]
    where $ X^{\geq 2}(t)$ denotes the number of non-leaves in the tree $t$.
\end{lem}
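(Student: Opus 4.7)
I begin by noting that the quantity $EC(t)$ is, despite its name, the minimum vertex cover number of $t$. To prove the lemma I will exhibit a matching $M$ of $t$ with $|M| \geq \tfrac{1}{2}X^{\geq 2}(t)$. The inequality $|V'| \geq |M|$ for any vertex cover $V'$ and matching $M$ is immediate---each edge of $M$ must be covered by some vertex of $V'$, and distinct matching edges require distinct covering vertices---so this reduces the lemma to exhibiting such a matching.

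The existence of this matching I will prove by strong induction on $|V(t)|$. The base case $|V(t)| \leq 2$ is trivial: $X^{\geq 2}(t) = 0$ and the empty matching suffices. For the inductive step with $n := |V(t)| \geq 3$, I choose any leaf $l$ of $t$ with support vertex $s$, and split on $\deg_t(s)$. If $\deg_t(s) \geq 3$, set $t' := t \setminus \{l\}$; since $s$ remains a non-leaf, $X^{\geq 2}(t') = X^{\geq 2}(t)$, and the induction hypothesis applied to $t'$ yields a matching of the required size that is also a matching of $t$.

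If $\deg_t(s) = 2$, let $g$ be the other neighbour of $s$. Either $g$ is a leaf, in which case $n = 3$, $X^{\geq 2}(t) = 1$, and $M = \{ls\}$ works; or $g$ is a non-leaf, in which case I pass to $t' := t \setminus \{l,s\}$. Removing $s$ can demote at most one non-leaf to a leaf (namely $g$, if its degree drops to one), so $X^{\geq 2}(t') \geq X^{\geq 2}(t) - 2$. By induction $t'$ admits a matching $M'$ with $|M'| \geq \tfrac{1}{2}(X^{\geq 2}(t)-2)$, and $M := M' \cup \{ls\}$ is a matching of $t$ of size at least $\tfrac{1}{2}X^{\geq 2}(t)$, completing the induction.

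The argument is really an exercise in bookkeeping, so I do not anticipate any substantive obstacle. The only point requiring care is ensuring that each new matching edge accounts for at most two deleted non-leaves; peeling a leaf together with its degree-$2$ support vertex is the right move because $s$ then has exactly one non-leaf neighbour, so at most one extra non-leaf can degenerate. This is precisely what makes the factor $\tfrac{1}{2}$ in the lemma correct (and tight, as the even path $P_{2k}$ shows).
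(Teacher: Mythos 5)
Your proof is correct, and it takes a genuinely different route from the paper's. You first observe (correctly) that the quantity the paper calls the ``minimal edge cover number'' is really the minimum vertex cover number, and you then exhibit a matching of size at least $\tfrac12 X^{\geq 2}(t)$; the weak duality $|M| \le |V'|$ between matchings and vertex covers finishes the job. Your matching is built by induction, peeling off a leaf $l$ together with its support vertex $s$ when $\deg(s)=2$ (contributing the edge $ls$ to the matching and destroying at most two non-leaves, namely $s$ and possibly $g$), and simply discarding $l$ when $\deg(s)\geq 3$ (preserving $X^{\geq 2}$). The case analysis is complete and the accounting checks out. The paper instead uses a one-shot decomposition of the tree into vertex-disjoint leaf-to-root paths $P'(\ell_1),\dots,P'(\ell_{X^1(t)})$, observes that any vertex cover of $t$ covers each of these paths, and then applies the bound $\lfloor m/2\rfloor$ on the vertex cover number of an $m$-vertex path. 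Both arguments are short and both implicitly rest on the same duality for paths; the trade-off is that the paper's version is direct and structural (no induction, a clean closed-form count), while yours is a straightforward peeling induction that is perhaps easier to verify step by step. One small aside: your parenthetical claim that $P_{2k}$ shows the bound is tight is not quite right---for a path on $m$ vertices the minimum vertex cover is $\lfloor m/2\rfloor$ while $\tfrac12 X^{\geq 2}=\tfrac{m-2}{2}$, so there is always slack $1$; long paths only show the constant $\tfrac12$ cannot be improved asymptotically. This does not affect the correctness of the proof.
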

\begin{proof}
    We can assume that $t$ is a rooted tree by declaring an arbitrary vertex in $t$ the root. Decompose the tree $t$ into the following vertex-disjoint paths. Let $\ell_1,...,\ell_{X^1(t)}$ be the leaves of $t$. For a leaf $\ell$ of $t$, let $P(\ell)$ be the path from $\ell$ to the root of $t$. For each $i\in [X^1(t)]$, let $P'(\ell_i) = P(\ell_i)\setminus \bigcup_{j=0}^{i-1}P'(\ell_j)$, that is, $P'(\ell_i)$ is the path $P(\ell_i)$ stripped of the vertices in $\bigcup_{j=0}^{i-1}P'(\ell_j)$. This decomposition gives us $X^{1}(t)$ disjoint paths $P'_{\ell_1}, \ldots, P'_{\ell_{X^1(t)}}$. Note that if $V'\subset V$ is an edge cover of $t$ it must also be an edge cover of $P'(\ell_1)\cup \ldots \cup P'(\ell_{X^1(t)})$ (indeed, while removing edges, the requirement for a collection of edges to be an edge cover is weakened). An edge cover of a disconnected graph is a disjoint union of edge covers of the components, and an edge cover of a path of $m$ vertices contains at least $\lfloor m/2 \rfloor$ vertices, so 
    \[
        |EC(t)|  \geq \sum_{i=1}^{X^1(t)}\left\lfloor\frac{|P'_i|}{2}\right\rfloor \geq \sum_{i=1}^{X^1(t)}\left(\frac{|P'_i| -1}{2}\right) = \frac{|V(t)| - X^1(t)}{2} = \frac{X^{\geq 2}(t)}{2}. \qedhere
    \]
\end{proof}

We now prove Theorem \ref{thm:InfinityLinearDegree}, which in particular implies that for any $k$, the number of vertices with degree at least $k$ goes to infinity almost surely. 

\begin{proof}[Proof of Theorem \ref{thm:InfinityLinearDegree}]
Fix a constant $M\in \N$  and let $(m_n,n\geq 1)$ be a sequence satisfying $m_n=o(n)$. We will prove that $\liminf_{n\to\infty} X^{\geq m_n}_n \geq M$ almost surely. By Lemma \ref{lem:lowerbound_diam}, there exists an almost surely finite time $\tau$ such that the diameter of the tree at time $\tau$ exceeds $2M$. Fix an arbitrary path of $2M+1$ vertices in $T_\tau$ and, for $n\geq \tau$, let $N^{(1)}_n\geq \dots\geq N^{(2M+1)}_n$ be the degrees of the vertices on this path in decreasing order. 
Then, by Theorem \ref{thm:AbundanceHubs}, almost surely, at least $M$ vertices on this path are hubs, so 
\[\liminf_{n\to \infty} \frac{N^{(M)}_n}{n}>0\] and in particular, there is a finite time $\tau'$ such that $N^{(M)}_n>m_n$ for all $n\geq \tau'$.  
This implies that from time $\tau'$ onwards, there are at least $M$ vertices with degree at least $m_n$, so 
\[\p{\liminf_{n\to\infty} X^{\geq m_n}_n \geq M}=1.\qedhere\]
\end{proof}

\subsection{Low-degree vertices}[Proof of Theorems~\ref{thm:SmallDegreeVertices} and \ref{thm:all_finite_deg_large}]\label{sec:SmallDegreeVertices} 
In this subsection we prove polynomial upper and lower bounds of $X_n^{\geq k}$, for $k$ bounded and show that $X_n^{\geq k} = \Theta(X_n^{\geq 2})$ almost surely. We begin by stating a general result on adapted processes, which will be of use in the proof of Theorem~\ref{thm:SmallDegreeVertices}. Its proof can be found in the appendix. For each $k \in \N$, recall that $X^k_n$ is the number of vertices of degree $k$ in $T_n$ and $X^{\geq k}_n$ is the number of vertices of degree at least $k$ in $T_n$.

\begin{prop}\label{prop:`lines'}
Let $(X_k)_{k\geq 0}$ and $(Y_k)_{k\geq 0}$ be integer-valued non-decreasing processes adapted to some filtration $(\cF_k)_{k\geq 0}$ such that $0\leq \Delta X_k + \Delta Y_k \leq 1$ for all $k$. Suppose there exists $\alpha>0$ such that $\Cexp{\Delta X_k }{ \cF_k}\geq  \alpha \Cexp{\Delta Y_k }{\cF_k}$ on the event $\{X_k < \alpha Y_k\}$, except at finitely many times almost surely. Then for any $\beta \in (0,\alpha)$ there exists $C>0$ such that 

$$\p{X_n < \beta Y_n - C\log n \text{ infinitely often}} = 0~,$$ 
and for all $n$, $\E{X_n} \geq \beta \E{Y_n} - C\log n$.

\end{prop}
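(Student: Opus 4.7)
\textbf{Proof plan for Proposition~\ref{prop:`lines'}.} The plan is to set up an exponential supermartingale for the deficit process and extract the logarithmic bound via Markov and Borel-Cantelli. Define $A_k := X_k - \beta Y_k$ and $B_k := (-A_k)_+ = \max(0, \beta Y_k - X_k)$, so the desired conclusions are equivalent to $B_n \leq C\log n$ eventually almost surely and $\E{B_n} \leq C\log n$ for all $n$. Because $\beta < \alpha$ and $Y_k \geq 0$, we have $\{A_k < 0\} \subseteq \{X_k < \alpha Y_k\}$, so the hypothesis yields $p_k \geq \alpha q_k$ on $\{A_k < 0\}$ outside an a.s.-finite random set of times, where I write $p_k := \Cprob{\Delta X_k = 1}{\cF_k}$ and $q_k := \Cprob{\Delta Y_k = 1}{\cF_k}$.

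Since the maps $\lambda \mapsto \alpha(1 - e^{-\lambda})$ and $\lambda \mapsto e^{\lambda\beta} - 1$ both vanish at $0$ with derivatives $\alpha > \beta$ respectively, I will fix $\lambda > 0$ small enough that $\alpha(1 - e^{-\lambda}) \geq e^{\lambda\beta} - 1$. The key point is that on $\{B_k \geq 1\}$ (so $A_k \leq -1$), the three possible values of $B_{k+1}$ are $B_k - 1$, $B_k$, and $B_k + \beta$, all non-negative, so a direct computation gives, for $k$ outside the exceptional set,
\[
\Cexp{e^{\lambda B_{k+1}}}{\cF_k} = e^{\lambda B_k}\bigl(p_k e^{-\lambda} + q_k e^{\lambda \beta} + 1 - p_k - q_k\bigr) \leq e^{\lambda B_k},
\]
the last inequality following from $p_k(1 - e^{-\lambda}) \geq \alpha q_k(1 - e^{-\lambda}) \geq q_k(e^{\lambda\beta} - 1)$. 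On the complementary event $\{B_k < 1\}$, the deterministic bound $B_{k+1} \leq 1 + \beta$ yields the crude estimate $\Cexp{e^{\lambda B_{k+1}}}{\cF_k} \leq e^{\lambda(1+\beta)}$. Combining these, $\Cexp{e^{\lambda B_{k+1}}}{\cF_k} \leq e^{\lambda B_k} + e^{\lambda(1+\beta)}\mathbf{1}_{B_k < 1}$, and taking expectations and summing from a (random but finite) time past the exceptional set gives $\E{e^{\lambda B_n}} = O(n)$.

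From there the proof is routine. Markov's inequality yields $\p{B_n > C\log n} \leq e^{-\lambda C \log n}\E{e^{\lambda B_n}} = O(n^{1-\lambda C})$, which is summable for $C > 2/\lambda$, so Borel-Cantelli gives $B_n \leq C\log n$ eventually a.s. For the expectation, integrate the tail:
\[
\E{B_n} \leq C\log n + \int_{C\log n}^\infty O(n)\,e^{-\lambda t}\,dt = O(\log n),
\]
which, after adjusting $C$ to absorb the small-$n$ regime and the effect of the initial exceptional times, gives the claimed $\E{X_n} \geq \beta\E{Y_n} - C\log n$. The main obstacle is reconciling the local nature of the supermartingale property (it only holds when $B_k \geq 1$, and only away from the exceptional times) with the need for a uniform moment estimate: a naive attempt to run $e^{-\lambda A_n}$ as a supermartingale from time zero fails because the drift estimate degenerates near the boundary $\{A_k \geq 0\}$. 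My resolution is to allow a boundary-crossing term that costs a polynomial factor in $n$ in the exponential moment, which is then absorbed by choosing $C$ large enough that $e^{-\lambda C\log n}$ beats $n$.
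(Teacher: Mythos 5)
Your exponential-supermartingale approach differs genuinely from the paper's, which couples the deficit process $B_k = X_k - \beta Y_k$ to a sequence of independent negative excursions of a random walk with positive drift and then uses geometric tail bounds (their Lemma on ``sticky coupling''). In the \emph{clean} case (hypothesis holding at every time) your argument is correct and is arguably shorter and more elementary: the choice of $\lambda$ with $\alpha(1-e^{-\lambda})\geq e^{\lambda\beta}-1$, the computation of $\Cexp{e^{\lambda B_{k+1}}}{\cF_k}$ on $\{B_k\geq 1\}$, the crude bound when $B_k<1$, the conclusion $\E{e^{\lambda B_n}}=O(n)$, and the Markov/Borel--Cantelli finish are all fine, and the tail integral for $\E{B_n}$ is also sound. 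So as an alternative proof of the clean-case statement, I think it works.

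The genuine gap is in your handling of the ``except at finitely many times almost surely'' clause, which the paper goes out of its way to treat with a coupling. You write that ``taking expectations and summing from a (random but finite) time past the exceptional set gives $\E{e^{\lambda B_n}}=O(n)$'' and that the constant $C$ can absorb ``the effect of the initial exceptional times,'' but neither claim holds as stated. First, the exceptional times need not be initial: they may occur arbitrarily late. Second, the last bad time is not a stopping time (it depends on the whole future), so you cannot restart the supermartingale argument from it. Third, even if it were a stopping time $\tau$, your recursion gives at best $\E{e^{\lambda B_n}}\leq \E{e^{\lambda B_\tau}}+O(n)$, and there is no reason for $\E{e^{\lambda B_\tau}}$ to be finite, let alone $O(1)$: at each bad time $B_k$ can increase by $\beta$, the random number of bad times is only a.s.\ finite, and $\E{e^{\lambda B_n}}$ is in general not $O(n)$ under the weak hypothesis. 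The fix the paper uses is to build a modified pair $(X'_k,Y'_k)$, setting $(\Delta X'_k,\Delta Y'_k)=(1,0)$ at bad times (which are $\cF_k$-measurable, so this is legitimately adapted), applying the clean-case result to the modified process, and then transferring back using $X'_n\leq X_n + B$, $Y'_n\geq Y_n - B$ where $B$ is the a.s.\ finite count of bad times; the event $\{2B> C\log n\}$ happens only finitely often, which closes the argument. You would need to add something equivalent. Note also that, like the paper's, your argument silently uses $Y_k\geq 0$ to get $\{X_k<\beta Y_k\}\subseteq\{X_k<\alpha Y_k\}$; this is harmless in the applications but worth stating.
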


\begin{lem}
For $n \ge 4$,
\begin{align}
    \Cexp{\Delta X_{n}^{\geq 2} }{ T_n } &\leq \frac{1}{2n}X_n^{\geq 2}+\frac{1}{2n}X_n^{\ge 3}~, \label{eq:diff1}
    \end{align}
    and for $n \ge 5$,
    \begin{align}
    \Cexp{\Delta X_n^{\ge 3}}{ T_n}\le \frac{4}{3n}X_n^2~.
     \label{eq:diff2}
\end{align}
\end{lem}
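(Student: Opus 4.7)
The plan is to use the fact that both increments are indicator variables depending only on $W_n$: $\Delta X_n^{\geq 2} = \mathbb{1}[D_n(W_n) = 1]$ and $\Delta X_n^{\geq 3} = \mathbb{1}[D_n(W_n) = 2]$. Conditioning on $V_n$ and the neighbour choice, the conditional probability that $W_n$ equals a given vertex $w$ is $\tfrac{1}{n}\sum_{u \sim w} 1/D_n(u)$, so both bounds reduce to estimating sums of the form $\sum_v L_n(v)/D_n(v)$ and $\sum_{v:\,D_n(v)=2} \sum_{u \sim v} 1/D_n(u)$ respectively.

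For \eqref{eq:diff1}, I would restrict the sum to $v$ with $D_n(v) \geq 2$ (since for $n \geq 3$ a leaf has no leaf neighbours) and split into the degree-2 and degree-$\geq 3$ contributions. The key observation, valid for $n \geq 4$, is that a degree-2 vertex cannot have two leaf neighbours (otherwise the tree has only $3$ vertices), so $L_n(v) \leq 1$ whenever $D_n(v) = 2$. This gives $L_n(v)/2 \leq 1/2$ for degree-2 vertices and the trivial $L_n(v)/D_n(v) \leq 1$ for higher-degree vertices, summing to $\tfrac{1}{2} X_n^2 + X_n^{\geq 3} = \tfrac{1}{2}(X_n^{\geq 2} + X_n^{\geq 3})$; dividing by $n$ yields \eqref{eq:diff1}.

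For \eqref{eq:diff2}, write $S_2(v) = \sum_{u \sim v} 1/D_n(u)$; the goal is $\sum_{v:\,D_n(v)=2} S_2(v) \leq \tfrac{4}{3} X_n^2$. The naive per-vertex bound only gives $S_2(v) \leq 1 + 1/2 = 3/2$ (attained when $v$ has one leaf neighbour and one degree-2 neighbour), so an amortized argument is needed. The plan is to decompose the degree-2 vertices into maximal paths $u_1 - u_2 - \cdots - u_k$: these are the connected components of the subgraph of $T_n$ induced by $\{v: D_n(v) = 2\}$, which is a subforest of $T_n$ with maximum degree $2$ and hence a disjoint union of paths. For each such path, the two vertices $u_0, u_{k+1}$ just outside its endpoints have degree $1$ or $\geq 3$. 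I would then perform a short case analysis on the types of these boundary vertices (both degree $\geq 3$; exactly one a leaf; both leaves), directly computing $\sum_{i=1}^k S_2(u_i)$ and showing in each case that it is bounded by $\tfrac{4}{3} k$. Summing over all maximal paths gives the inequality.

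The main obstacle is the amortization in \eqref{eq:diff2}: the ``bad'' configurations responsible for the local $3/2$ bound must not occur on a short path. This is exactly what the hypothesis $n \geq 5$ ensures, since the pathological short paths with both boundary vertices being leaves ($u_0 - u_1 - u_2$ or $u_0 - u_1 - u_2 - u_3$, with $k = 1$ or $k = 2$) would force $n \in \{3,4\}$; thus in all surviving cases one has either $k \geq 3$ (two-leaf-boundary case) or one endpoint with degree $\geq 3$, which supplies the extra slack needed to push the per-path average below $\tfrac{4}{3}$.
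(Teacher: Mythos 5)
Your proof of \eqref{eq:diff1} is the same as the paper's: restrict $\sum_v L_n(v)/D_n(v)$ to non-leaves, use that for $n \ge 4$ a degree-$2$ vertex has at most one leaf neighbour, and combine. For \eqref{eq:diff2} you take a genuinely different route. The paper classifies each degree-$2$ vertex $v$ by the degrees $(i,j)$ of its two neighbours into classes $S_{ij}$, so the relevant sum becomes $\sum_{i\le j}|S_{ij}|(1/i+1/j)$; the dangerous class is $S_{12}$ (weight $3/2>4/3$), and the paper controls it by arguing $|S_{12}|\le|\bigcup_{2\le i\le j}S_{ij}|$ via the map sending $v\in S_{12}$ to its unique degree-$2$ neighbour $u$, which the paper claims is injective for $n\ge 5$. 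You instead decompose $\{v:D_n(v)=2\}$ into maximal induced paths and amortize the weight $\sum_{u\sim v}1/D_n(u)$ over each such path $u_1,\dots,u_k$ with boundary vertices $u_0,u_{k+1}$ of degree $1$ or $\ge 3$: for $k\ge 2$ the per-path total is $1/D_n(u_0)+1/D_n(u_{k+1})+k-1$, and for $k=1$ it is $1/D_n(u_0)+1/D_n(u_2)$, so the only configurations exceeding $\tfrac{4}{3}k$ have $k\in\{1,2\}$ with two leaf boundary vertices, which forces $n\le 4$. Both arguments prove the lemma, but yours is slightly more robust at the boundary: the paper's injectivity claim in fact fails for the path on five vertices (the middle vertex is the unique degree-$2$ neighbour of both $S_{12}$ vertices), whereas your $k=3$ two-leaf case handles $n=5$ directly and shows the inequality is tight there. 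One small correction: the per-path average is at most $\tfrac{4}{3}$, not strictly below it, since equality occurs for the $n=5$ path and for $k=1$ with one leaf and one degree-$3$ boundary vertex.
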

\begin{proof}
To prove \eqref{eq:diff1}, note that 
$\Delta X_{n}^{\geq 2}>0$ precisely if $W_n$ is a leaf of $T_n$, 
so 
\[
\Cexp{\Delta X_{n}^{\geq 2} }{T_n}
= 
\frac{1}{n}\sum_{v\in T_n}\frac{L_{n}(v)}{D_{n}(v)}.
\]
When $n \ge 3$, leaves do not have leaves as neighbours, and when $n \ge 4$, any vertex $v$ of degree two in $T_n$ has at most one leaf neighbour, thus if $v$ has degree two, $L_n(v)/D_n(v)=L_n(v)/2\le 1/2$. Together with the previous equality, this implies that 

\[
\Cexp{\Delta X_{n}^{\geq 2}}{ T_n}
\le 
\frac{1}{n}\left(\sum_{\{v:D_n(v)=2\}}\frac{1}{2} + 
\sum_{\{v:D_{n}(v)\geq 3\}}\frac{L_{n}(v)}{D_{n}(v)}\right)
\le \frac{X_n^2}{2n}+\frac{X_n^{\ge 3}}{n}~.
\]
Since $X_n^{2}=X_n^{\ge 2}-X_n^{\ge 3}$, this implies \eqref{eq:diff1}.

\vspace{0.5cm}

To prove \eqref{eq:diff2}, for $1 \le i \le j$ let 
\[
S_{ij}=\{v \in T_n: D_n(v)=2,\mbox{ the neighbours of $v$ in $T_n$ have degrees $i$ and $j$}\}\,.
\]
Then $X_n^2=\sum_{1 \le i \le j} |S_{i,j}|$ and 
\[
\Cexp{\Delta X_{n}^{\geq 3} }{T_n}
=
\sum_{1 \le i \le j}
\Cprob{W_n \in S_{ij}}{T_n}
=
\frac{1}{n} \sum_{1 \le i \le j} |S_{ij}| \left(\frac1i +\frac1j\right)\, .
\]
We bound this sum by splitting it into three sums. First, for terms with $i=1$ and $j \ge 3$ we have $1/i+1/j \le 4/3$. For $v \in \bigcup_{2 \le i \le j} S_{ij}$ we have $1/i+1/j \le 1$, while for $v \in S_{12}$ we have $1/i+1/j=3/2$. 
We claim that at most half of the vertices in $S_{12}\cup\bigcup_{2 \le i \le j} S_{ij}$ can be in $S_{12}$. Indeed, provided that $n \ge 5$, if $v\in S_{12}$ then for $u$ its unique neighbour with degree $2$ it holds that $u\in \bigcup_{2 \le i \le j} S_{ij}$.  Moreover, $n\ge 5$ implies that $u$ has at most one neighbour in $S_{12}$. Therefore, $|S_{12}| \leq |\bigcup_{2 \le i \le j} S_{ij}| $. This implies that 
\begin{align*}
\sum_{1 \le i \le j} |S_{ij}| \left(\frac1i +\frac1j\right)
& \le  \frac{4}{3}\sum_{j \ge 3} |S_{1j}| + \frac{1}{2} \left(1+\frac32\right) \left(|S_{12}|+\sum_{2 \le i \le j} |S_{ij}|\right)
\le \frac{4}{3}X_n^2\, ,
\end{align*}
therefore $\Cexp{\Delta X_{n}^{\geq 3} }{T_n } \le \frac{4}{3n} X_n^2$ as claimed.
\end{proof}

Before stating the next lemma we introduce the notation $X^{ k,\leq k}_n$, the number of vertices of degree $k$ having at most one neighbour of degree at least $k+1$, and $X^{ k,>k}_n$, the number of vertices of degree $k$ with at least two neighbours of degree at least $k+1$.
\begin{lem}\label{lem:fastgrowthverticesdegreek}
For any positive integer $k$, $n\geq 3$, 
\begin{align}
    X^{k}_n&=X^{ k,>k}_n+X^{k,\leq k }_n,\label{eq:diff3}\\
X^{\geq k+1}_n&\geq X^{k,>k}_n,\label{eq:diff4}\\
\Cexp{\Delta X^{\geq k+1}_{n}}{ T_n}&\geq \frac{k-1}{kn}X^{k,\leq k}_n\label{eq:diff5}.
\end{align}
\end{lem}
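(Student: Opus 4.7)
The three inequalities split cleanly by difficulty. Equation (\ref{eq:diff3}) is a tautology, since $X^{k,\leq k}_n$ and $X^{k,>k}_n$ partition the degree-$k$ vertices by definition. For (\ref{eq:diff5}), my plan is to unpack the conditional expectation directly. Since $\Delta X^{\geq k+1}_n = \I{D_n(W_n)=k}$, and $\Cprob{W_n=w}{T_n}=\frac{1}{n}\sum_{u\in\cN(w;T_n)} 1/D_n(u)$, one has
\[
\Cexp{\Delta X^{\geq k+1}_n}{T_n} \;=\; \frac{1}{n}\sum_{w\,:\,D_n(w)=k}\;\sum_{u\in\cN(w;T_n)}\frac{1}{D_n(u)}.
\]
For $w \in X^{k,\leq k}_n$, at least $k-1$ of the $k$ neighbours of $w$ have degree at most $k$, each contributing at least $1/k$ to the inner sum; discarding the remaining terms in the outer sum yields the stated bound $\tfrac{k-1}{kn}X^{k,\leq k}_n$.

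The hard step will be (\ref{eq:diff4}), where I must genuinely exploit the fact that $T_n$ is a tree. Let $A := \{v : D_n(v)=k,\ v\text{ has at least two neighbours of degree }\geq k+1\}$ and $B := \{v : D_n(v)\geq k+1\}$, so that $|A|=X^{k,>k}_n$ and $|B|=X^{\geq k+1}_n$. The plan is to consider the bipartite subgraph $H$ of $T_n$ on vertex set $A \cup B$ whose edges are the $T_n$-edges joining a vertex of $A$ to one of $B$. Every vertex of $A$ has $H$-degree at least two by construction, and since $H$ is a subgraph of a tree it is itself a forest. I would then argue component-by-component: a component containing $a$ vertices of $A$ and $b$ vertices of $B$ is a tree, so it has exactly $a+b-1$ edges; because $H$ is bipartite, this edge count equals the total $A$-degree in the component, which is at least $2a$. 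Hence $b \geq a+1$ in each such component. Summing over components that meet $A$, and observing that every $A$-vertex lies in such a component (having degree $\geq 2 > 0$ in $H$), yields $|A|\leq |B|$.

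The only subtlety in the forest argument is handling components that consist solely of $B$-vertices; these contribute nothing to the $A$-count and only help the inequality. The edge cases are benign: for $k=1$ one has $A=\emptyset$ (since degree-$1$ vertices cannot have two neighbours), so (\ref{eq:diff4}) is trivial, and the constant $(k-1)/k$ in (\ref{eq:diff5}) vanishes, making that bound vacuous as well.
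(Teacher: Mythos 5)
Your proof is correct. Parts \eqref{eq:diff3} and \eqref{eq:diff5} coincide with the paper's argument: \eqref{eq:diff3} is definitional, and for \eqref{eq:diff5} you expand $\Cexp{\Delta X^{\geq k+1}_n}{T_n}=\tfrac{1}{n}\sum_{w:D_n(w)=k}\sum_{u\in\cN(w;T_n)}\tfrac{1}{D_n(u)}$, restrict the outer sum to $X^{k,\le k}_n$ and bound the inner sum below by $\tfrac{k-1}{k}$, which is exactly what the paper does (just phrased as ``sufficient but not necessary'').

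For \eqref{eq:diff4} you take a genuinely different route. The paper exploits the recursive labelling of the tree: a vertex $v$ counted by $X^{k,>k}_n$ has at most one parent, so among its $\geq 2$ neighbours of degree $\geq k+1$ at least one is a child, and mapping $v$ to such a child is an injection into $\{u:D_n(u)\ge k+1\}$ because each vertex has a unique parent. Your argument instead uses only the (unlabelled) tree structure: the $A$--$B$ bipartite subgraph $H$ is a forest, so any component spanning $a$ vertices of $A$ and $b$ of $B$ has $a+b-1$ edges; equating that with the total $A$-degree, which is at least $2a$, gives $b\geq a+1$, and summing over components yields $|A|\le|B|$. Both are short and correct; your version is more robust in the sense that it does not invoke the time-ordering/parent structure built into the process, while the paper's is slightly more economical by leaning on the fact that every vertex has at most one parent. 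Your handling of the edge case $k=1$ (so that $A=\emptyset$ and \eqref{eq:diff5} is vacuous) is also appropriate.
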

\begin{proof}
The equality \eqref{eq:diff3} follows directly from the definition of $X^{ k,>k}_n$ and $X^{k,\leq k }_n$. To prove the second statement, remark that  any vertex $v$ contributing to $X_n^{k,>k}$ has at least two neighbours of degree at least $k+1$, and at least one is a child of $v$. Since every vertex is the child of at most $1$ vertex, \eqref{eq:diff4} follows.

Finally, to prove \eqref{eq:diff5}, one must understand how vertices of degree $k+1$ are created. In order to have $\Delta X_n^{\ge k+1} =1$, it is sufficient (but not necessary) that vertex $n+1$ attaches to a vertex counted by $X_n^{k,\le k}$, or in other words, that $W_n=w$ for some vertex $w$ with degree $k$ which has at least $k-1$ neighbours of degree at most $k$. For each such vertex $w$, this happens with probability at least $\tfrac{k-1}{kn}$. This proves \eqref{eq:diff4}.
\end{proof}

\begin{lem}\label{lem:upperbound_change_atleastk}
For any integer $k$, whenever $n\ge k+2$.
\begin{align}\label{eq:upperbound_change}
\Cexp{\Delta X^{\geq k+1}_{n}}{T_n}\leq \frac{k-1/2}{n}  X^{k}_n~.
\end{align}
\end{lem}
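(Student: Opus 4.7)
The plan is to turn the expected increment into a sum over degree-$k$ vertices of how likely each one is to be chosen as $W_n$, and then bound the per-vertex contribution using a simple parity/connectedness argument.

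First, observe that $\Delta X^{\ge k+1}_n = 1$ exactly when $W_n$ is a vertex of degree precisely $k$ in $T_n$ (attaching vertex $n+1$ then pushes it to degree $k+1$), and $\Delta X^{\ge k+1}_n = 0$ otherwise. Since $V_n$ is uniform on $[n]$ and $W_n$ is a uniform neighbour of $V_n$, for every vertex $w$,
\[
\Cprob{W_n = w}{T_n} \;=\; \frac{1}{n}\sum_{v \in \cN(w;T_n)} \frac{1}{D_n(v)}\,,
\]
so
\[
\Cexp{\Delta X^{\ge k+1}_n}{T_n} \;=\; \frac{1}{n}\sum_{w:\, D_n(w)=k}\; \sum_{v \in \cN(w;T_n)} \frac{1}{D_n(v)}\,.
\]

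The whole content of the lemma is now a uniform upper bound of $k-\tfrac12$ on the inner sum $A(w) := \sum_{v \in \cN(w;T_n)} \tfrac{1}{D_n(v)}$ for every $w$ with $D_n(w)=k$. Each of the $k$ terms $1/D_n(v)$ is at most $1$, so $A(w)\le k$ trivially. The saving of $\tfrac12$ comes from showing that at least one neighbour of $w$ must have degree $\ge 2$: suppose instead that every neighbour of $w$ were a leaf; then the connected set consisting of $w$ and its $k$ leaf-neighbours would have size exactly $k+1$, and because $T_n$ is connected this would force $n=k+1$, contradicting the hypothesis $n \ge k+2$. Hence at least one neighbour $v^*$ of $w$ satisfies $D_n(v^*) \ge 2$, so $1/D_n(v^*) \le \tfrac12$, while the remaining $k-1$ terms contribute at most $1$ each, giving $A(w) \le (k-1) + \tfrac12 = k-\tfrac12$.

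Plugging this bound back into the displayed identity yields
\[
\Cexp{\Delta X^{\ge k+1}_n}{T_n} \;\le\; \frac{1}{n}\sum_{w:\, D_n(w)=k} \bigl(k-\tfrac12\bigr) \;=\; \frac{k-1/2}{n}\, X^k_n\,,
\]
as required. There is no real obstacle here; the only thing to be careful about is the edge case where $w$ could in principle have all its neighbours be leaves, which is precisely what the hypothesis $n \ge k+2$ rules out (this is also why the same bound needs one extra vertex beyond the "obvious" $n\ge k+1$).
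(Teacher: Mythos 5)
Your proof is correct and matches the paper's approach: both compute $\Cprob{W_n=w}{T_n}=\tfrac1n\sum_{v\in\cN(w;T_n)}1/D_n(v)$ for a degree-$k$ vertex $w$ and observe that this sum is maximized at $k-\tfrac12$, achieved when $w$ has $k-1$ leaf neighbours and one degree-two neighbour. You usefully spell out the step the paper leaves implicit, namely that the connectedness of $T_n$ together with $n\ge k+2$ rules out the configuration where all $k$ neighbours of $w$ are leaves.
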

\begin{proof}
A vertex of degree $k+1$ is created at time $n$ if $W_n$ has degree $k$ in $T_n$. For a vertex $w\in T_n$ of degree $k$, the probability of $W_n = w$ is maximized if the neighbours of $w$ have lowest possible degree; that is, if $w$ has $k-1$ leaf neighbours and one neighbour of degree two. In this case, the probability that $W_n = w$ equals $\tfrac{k-1/2}{n}$; the lemma follows.
\end{proof}

We use the following two lemmas to show that the number of non-leaves grows at least polynomially.
\begin{lem}\label{lem:atleast2_vs_atleast3}
For $n \ge 3$,
\[\Cexp{\Delta X^{\geq 2}_{n}}{ T_n}\geq \frac{1}{3n}X_n^{\geq 3}.\]
\end{lem}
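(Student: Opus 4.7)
The plan is to express the conditional expectation explicitly, reduce to a purely deterministic tree inequality, and then prove that inequality by a contraction-and-charging argument.

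First, I would observe that $\Delta X_n^{\ge 2}=1$ precisely when $W_n$ is a leaf of $T_n$. Decomposing over the uniform choice of $V_n$ gives
\[
\Cexp{\Delta X_n^{\ge 2}}{T_n}
=\frac{1}{n}\sum_{v\in [n]}\frac{L_n(v)}{D_n(v)}
=\frac{1}{n}\sum_{\ell\,:\,D_n(\ell)=1}\frac{1}{D_n(v_\ell)},
\]
where $v_\ell$ denotes the unique neighbour of the leaf $\ell$ (necessarily non-leaf, since $n\ge 3$), and the last equality comes from reindexing so that each leaf contributes once. Thus it suffices to prove the deterministic claim that for any tree $t$ on $n\ge 3$ vertices,
\[
\sum_{\ell\,:\,D(\ell)=1}\frac{1}{D(v_\ell)}\;\ge\; \tfrac{1}{3}\,X^{\ge 3}(t).
\]

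For the deterministic claim, I would contract degree-$2$ vertices. Let $\bar t$ be the graph obtained from $t$ by suppressing every degree-$2$ vertex; its vertex set is exactly the leaves of $t$ together with the vertices of degree $\ge 3$ in $t$. For each $u$ with $D(u)\ge 3$, partition its $D(u)$ arms in $\bar t$ into three classes: $a(u)$ \emph{direct-leaf arms} (to a leaf of $t$ adjacent to $u$ in $t$), $b(u)$ \emph{indirect-leaf arms} (to a leaf through a path of one or more degree-$2$ vertices), and $c(u)$ \emph{internal arms} (to another degree-$\ge 3$ vertex of $t$). A direct-leaf arm contributes $1/D(u)$ to $\sum_\ell 1/D(v_\ell)$, and an indirect-leaf arm contributes $1/2$ (its leaf's neighbour has degree $2$), giving
\[
\sum_{\ell}\frac{1}{D(v_\ell)}
=\sum_{u\,:\,D(u)\ge 3}\!\left(\frac{a(u)}{D(u)}+\frac{b(u)}{2}\right).
\]

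Finally, I would finish by a case analysis on the skeleton degree $c(u)$. Using $a(u)+b(u)=D(u)-c(u)$ and $D(u)\ge 3$, one checks the pointwise lower bounds
\[
\frac{a(u)}{D(u)}+\frac{b(u)}{2}\;\ge\;
\begin{cases}
1&\text{if }c(u)=0,\\
2/3&\text{if }c(u)=1,\\
1/3&\text{if }c(u)=2,\\
0&\text{if }c(u)\ge 3,
\end{cases}
\]
the worst case in each line being $b=0$, $a=D(u)-c(u)$. Writing $I_k:=\#\{u:D(u)\ge 3,\,c(u)=k\}$, so that $X^{\ge 3}(t)=I_0+I_1+I_2+I_{\ge 3}$, summation yields $\sum_\ell 1/D(v_\ell)\ge I_0+\tfrac{2}{3}I_1+\tfrac{1}{3}I_2$. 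Since the subgraph of $\bar t$ induced on the degree-$\ge 3$ vertices of $t$ is itself a tree on $X^{\ge 3}(t)$ vertices, with degree sequence $(c(u))_u$, the standard tree identity ``number of leaves $\ge$ number of branch points $+\,2$'' yields $I_1+2I_0\ge I_{\ge 3}$ (with the trivial cases $X^{\ge 3}(t)\le 1$ verified directly, for $X^{\ge 3}(t)=1$ using $I_0=1$). Rearranging,
\[
I_0+\tfrac{2}{3}I_1+\tfrac{1}{3}I_2 - \tfrac{1}{3}X^{\ge 3}(t)=\tfrac{1}{3}\bigl(2I_0+I_1-I_{\ge 3}\bigr)\ge 0,
\]
which is the required bound. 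The main obstacle I anticipate is careful bookkeeping in $\bar t$, in particular handling internal arms of length zero (pairs of adjacent degree-$\ge 3$ vertices of $t$) and verifying the small-$X^{\ge 3}(t)$ boundary cases where the skeleton is trivial.
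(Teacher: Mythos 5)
Your proof is correct. It shares the paper's high-level strategy—reduce to the deterministic inequality $\sum_\ell 1/D(v_\ell)\ge\tfrac13 X^{\ge 3}(t)$ and finish via the tree identity ``leaves $\ge$ branch points $+2$''—but applies that identity to a different contracted tree. The paper removes the leaves of $T_n$ once, obtaining $T'_n$, and classifies the non-leaf vertices of $T_n$ according to their degree in $T'_n$ (becoming a leaf, a degree-$2$ vertex, or a branch point of $T'_n$) together with their degree in $T_n$, producing the four sets $V_1,\dots,V_4$ and applying the identity to $T'_n$. You instead suppress every degree-$2$ vertex of $T_n$ to form $\bar t$, classify each branch point $u$ of $T_n$ by its skeleton degree $c(u)$, and apply the identity one level deeper, to the tree obtained by pruning the leaves of $\bar t$. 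Your per-vertex bound $(D(u)-c(u))/D(u)\ge 1-c(u)/3$ for $D(u)\ge 3$ produces the same threshold sequence $1,\ 2/3,\ 1/3,\ 0$ that the paper assigns to its classes, and the concluding algebra matches. What the paper's approach buys is lighter bookkeeping: a single pruning step, no notion of arms, and no need to verify that the skeleton is a tree or to treat $c(u)=0$ as a special case. What yours buys is transparency: each unit of the lower bound is attributed directly to a branch point of $T_n$, with a coefficient determined solely by that vertex's position in the branch-point skeleton, which makes the emergence of the constant $1/3$ visibly a consequence of $D\ge 3$. Both routes are sound; yours is a genuinely different, if closely related, decomposition.
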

\begin{proof}
Note that $\Delta X_n^{\ge 2}=1$ if and only if $W_n$ is a leaf. Therefore, as observed before, if $n \ge 3$ then

\begin{align}\label{eq:DeltaX2IsLarge}
    \Cexp{\Delta X^{\geq 2}_{n}}{  T_n}=\frac{1}{n}\sum_{\{v:D_n(v)\geq 2\}}\frac{L_{n}(v)}{D_{n}(v)}.
\end{align}
Let $T'_n$ be equal to $T_n$ with all of its leaves removed. We consider different sets of vertices in $T'_n$ and we study their contribution to the sum above. 

Let $V_1$ be the set of vertices of $T_n$ that are leaves in $T'_n$ and that were vertices of degree $2$ in $T_n$, so that for each $v\in V_1$, $\frac{L_{n}(v)}{D_{n}(v)}= \frac{1}{2}$. Let $V_2$ be the vertices of $T_n$ that are leaves in $T'_n$ that were vertices of degree at least $3$ in $T_n$ so that for each $v\in V_2$, $\frac{L_{n}(v)}{D_{n}(v)}=\frac{D_{n}(v)-1}{D_{n}(v)}\geq \frac{2}{3}$. Let $V_3$ be the vertices that have degree $2$ in $T'_n$ and had degree at least $3$ in $T_n$, so that for each $v\in V_3$, $\frac{L_{n}(v)}{D_{n}(v)}=\frac{D_n(v)-2}{D_n(v)}\geq \frac{1}{3}$. Finally, let $V_4$ be the vertices that have degree at least $3$ in $T'_n$. 
Therefore, 

\begin{equation}\label{eq:SumLOverDIsLarge}
    \sum_{\{v:D_n(v)\geq 2\}}\frac{L_{n}(v)}{D_{n}(v)}\geq \frac{1}{2}|V_1|+\frac{2}{3}|V_2|+\frac{1}{3}|V_3|~.
\end{equation}
To lower bound this sum note that $|V_1|+|V_2|$ is the number of leaves in $T'_n$. Since $T'_n$ is a tree, the number of leaves in $T'_n$ is given by 

$$\sum_{\{v: |\mathcal{N}(v,T'_n)|\geq 3\}} (|\mathcal{N}(v,T'_n)|-2)+2$$ and so 

$$|V_1|+|V_2|=\sum_{v\in V_4} (|\mathcal{N}(v,T'_n)|-2)+2~.$$
Finally, since $\sum_{v\in V_4} (|\mathcal{N}(v,T'_n)|-2)+2\geq |V_4|$ we obtain that 
$$ |V_1|+|V_2| \geq |V_4|~,$$
hence
$$ \frac{1}{2}|V_1|+\frac{2}{3}|V_2| \geq \frac{1}{3}|V_2|+\frac{1}{3}|V_4|~.$$
It also holds that $X_n^{\geq 3}=|V_2|+|V_3 |+| V_4|$, so we conclude that
\[ \frac{1}{2}|V_1|+\frac{2}{3}|V_2|+\frac{1}{3}|V_3| \geq \frac{1}{3}X_n^{\geq 3}~. \qedhere\]
Combined with \eqref{eq:DeltaX2IsLarge} and \eqref{eq:SumLOverDIsLarge}, this completes the proof. 
\end{proof}

\begin{lem}\label{lem:atleast3large_old}
Let $\alpha=(\sqrt{13}-3)/2 \approx 0.303$ be the unique positive solution of $x=\tfrac{1-2x}{1+x}$. Then, for any $\beta \in (0,\alpha)$ there exists $c>0$ such that 
\[\p{X_n^{\geq 3}<\beta X_n^{\geq 2}-c\log n \text{ infinitely often}} = 0,\]
and for all $n$, $\E{X_n^{\geq 3}}\geq \beta \E{X_n^{\geq 2}}-c\log n$.
\end{lem}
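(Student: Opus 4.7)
The plan is to apply Proposition~\ref{prop:`lines'} with $X_k := X_k^{\ge 3}$, $Y_k := X_k^{\ge 2}$, and $\alpha$ as in the statement, i.e.\ the positive root of $x^2+3x-1=0$, equivalently the unique positive fixed point of $f(r):=(1-2r)/(1+r)$. The qualitative hypotheses are immediate: both processes are integer-valued and non-decreasing; and since $\Delta X_n^{\ge 2}=1$ iff $W_n$ is a leaf of $T_n$ while $\Delta X_n^{\ge 3}=1$ iff $W_n$ has degree exactly two in $T_n$, these two events are disjoint, so the joint increment lies in $\{0,1\}$.

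The substance of the argument is verifying the drift inequality $\Cexp{\Delta X_n^{\ge 3}}{T_n}\ge \alpha\,\Cexp{\Delta X_n^{\ge 2}}{T_n}$ on $\{X_n^{\ge 3}<\alpha X_n^{\ge 2}\}$. I would combine two already-established estimates: the upper bound \eqref{eq:diff1}, which for $n\ge 4$ gives $\Cexp{\Delta X_n^{\ge 2}}{T_n}\le \tfrac{1}{2n}(X_n^{\ge 2}+X_n^{\ge 3})$; and a lower bound on $\Cexp{\Delta X_n^{\ge 3}}{T_n}$ obtained from \eqref{eq:diff5} with $k=2$ together with \eqref{eq:diff3}--\eqref{eq:diff4}. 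The latter give $X_n^{2,\le 2}\ge X_n^2 - X_n^{\ge 3} = X_n^{\ge 2}-2X_n^{\ge 3}$, so $\Cexp{\Delta X_n^{\ge 3}}{T_n}\ge \tfrac{1}{2n}(X_n^{\ge 2}-2X_n^{\ge 3})$. On the event $\{X_n^{\ge 2}>0\}$, setting $r:=X_n^{\ge 3}/X_n^{\ge 2}$, the ratio of the two drifts is therefore at least $f(r)$. Since $f$ is strictly decreasing and $\alpha$ is its fixed point, $r<\alpha$ yields $f(r)>\alpha$, which is the required inequality. When $X_n^{\ge 2}=0$ the conditioning event $\{X_n^{\ge 3}<\alpha X_n^{\ge 2}\}$ is empty, so nothing is to be checked.

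The input bounds hold for all $n\ge 4$, well within the ``finitely many exceptional times'' clause of the proposition, so Proposition~\ref{prop:`lines'} delivers both the almost-sure and the in-expectation statements of the lemma. The hard part is really the conceptual one: recognising that the defining relation $\alpha=(1-2\alpha)/(1+\alpha)$ for $\alpha$ is exactly the balance equation between the pessimistic lower bound on the growth of $X_n^{\ge 3}$ and the optimistic upper bound on the growth of $X_n^{\ge 2}$ obtained from the preceding two lemmas. Once that identification is made, the rest is a direct appeal to the abstract proposition.
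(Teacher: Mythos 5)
Your proof is correct and is essentially the same as the paper's. You identify the same two input estimates (the upper bound \eqref{eq:diff1} on $\Cexp{\Delta X_n^{\geq 2}}{T_n}$ and the bounds from Lemma~\ref{lem:fastgrowthverticesdegreek} with $k=2$ to lower-bound $\Cexp{\Delta X_n^{\geq 3}}{T_n}$ via $X_n^{2,\leq 2}\geq X_n^{\geq 2}-2X_n^{\geq 3}$), and apply Proposition~\ref{prop:`lines'} with the same choice of processes; the paper substitutes $X_n^{\geq 3}<\alpha X_n^{\geq 2}$ directly to reach $\frac{1-2\alpha}{1+\alpha}=\alpha$, whereas you phrase it via the fixed-point relation $f(r)>\alpha$ for $r<\alpha$, which is a cosmetic rearrangement of the same inequalities.
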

\begin{proof}
  The statements follow directly by applying Proposition \ref{prop:`lines'}, once we show that for $\alpha$ as in the Lemma statement, for $n\ge 4$, either $X^{\geq 3}_n\geq \alpha X^{\geq 2}_n$ or $\Cexp{\Delta X^{\geq 3}_{n}}{ T_n }\geq \alpha\Cexp{\Delta X^{\geq 2}_{n}}{ T_n }.$
 Suppose that $X^{\geq 3}_n< \alpha X^{\geq 2}_n$. Then, by \eqref{eq:diff1},

 \begin{equation}\label{eq:non-leavesgrowfast}\Cexp{\Delta X_{n}^{\geq 2} }{ T_n } \leq \frac{1}{2n}X_n^{\geq 2}+\frac{1}{2n}X_n^{\ge 3}\leq \frac{1+\alpha}{2n}X_n^{\geq 2}. \end{equation}
 Moreover, observe that the case $k=2$ of Lemma \ref{lem:fastgrowthverticesdegreek} gives that

\begin{align}
X^{2}_n&=X^{ 2,>2}_n+X^{2,\leq 2 }_n,\label{eq:diff6}\\
X^{\geq 3}_n&\geq X^{2,>2}_n\text{, and}\label{eq:diff7}\\
\Cexp{\Delta X^{\geq 3}_{n}}{ T_n}&\geq \frac{1}{2n}X^{2,\leq 2}_n.\label{eq:diff8}
\end{align}
Since $X^{\geq 3}_n< \alpha X^{\geq 2}_n$, \eqref{eq:diff7} implies that  $X^{2,>2}_n < \alpha X^{\geq 2}_n$. Note that $X^{\geq 2}_n=X^{2,>2}_n+X^{2,\leq 2}_n+X^{\geq 3}_n$, so the bounds $X^{2,>2}_n < \alpha X^{\geq 2}_n$ and $X^{\geq 3}_n < \alpha X^{\geq 2}_n$ together imply that $X^{2,\leq 2}_n>(1-2\alpha)X^{\geq 2}_n$. Combining this bound with \eqref{eq:diff8} and \eqref{eq:non-leavesgrowfast}, we conclude that 
\[ \Cexp{\Delta X^{\geq 3}_{n}}{ T_n}\geq \frac{1-2\alpha}{2n}X^{\geq 2}_n\geq \frac{1-2\alpha}{1+\alpha}\Cexp{\Delta X_{n}^{\geq 2}}{T_n }=\alpha \Cexp{\Delta X_{n}^{\geq 2} }{ T_n },
\]
as required.\qedhere

\end{proof}

\begin{lem}\label{lem:deg3toinfty}
    As $n\to \infty$, $X_n^{\geq 3}\to\infty$ almost surely.
\end{lem}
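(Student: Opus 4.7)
The plan is to recognize this lemma as essentially a special case of Theorem \ref{thm:InfinityLinearDegree} (with the constant sequence $m_n \equiv 3$), and to run the same kind of path-based argument. I fix an arbitrary integer $K \geq 1$; it suffices to show that $X_n^{\geq 3} \geq K$ for all sufficiently large $n$ almost surely.

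First, I use Lemma \ref{lem:lowerbound_diam} to produce an almost surely finite random time $\tau$ at which $\mathrm{Diam}_\tau \geq 2K$, and fix once and for all a path $v_0 v_1 \cdots v_{2K}$ of length $2K$ in $T_\tau$. Since edges are never deleted, this path persists in $T_n$ for every $n \geq \tau$. By Theorem \ref{thm:AbundanceHubs}, each of the $2K$ edges on the path has at least one hub endpoint, so the set $H := \{v_j : Z_{v_j} > 0,\ 0 \leq j \leq 2K\}$ is a vertex cover of the path. The minimum vertex cover of a path with $2K$ edges has size $K$ (forced by the matching $\{v_{2i}, v_{2i+1}\}_{i=0}^{K-1}$), so $|H| \geq K$. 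For any $v \in H$, the definition $Z_v = \liminf_n D_n(v)/n > 0$ immediately yields $D_n(v) \to \infty$, so there is a random $N \geq \tau$ with $D_n(v) \geq 3$ for every $v \in H$ and every $n \geq N$. Hence $X_n^{\geq 3} \geq |H| \geq K$ for $n \geq N$, and letting $K \to \infty$ gives $X_n^{\geq 3} \to \infty$ almost surely.

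The one point requiring care is the absence of circularity, since Lemma \ref{lem:deg3toinfty} feeds into the eventual proof of Theorem \ref{thm:SmallDegreeVertices}. Both Lemma \ref{lem:lowerbound_diam} (diameter growth via elementary independent-Bernoulli estimates) and Theorem \ref{thm:AbundanceHubs} (hub per edge, via a P\'olya urn coupling) are free of any dependence on Theorem \ref{thm:SmallDegreeVertices}. The only subtle spot is that I extract $D_n(v) \to \infty$ directly from $\liminf_n D_n(v)/n > 0$, rather than appealing to the full almost-sure convergence of Theorem \ref{thm:DegreeConvergence}, whose proof does invoke Theorem \ref{thm:SmallDegreeVertices}; this bypass is what keeps the argument non-circular.
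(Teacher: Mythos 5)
Your proof is correct, and your circularity check is sound: neither Lemma~\ref{lem:lowerbound_diam} nor Theorem~\ref{thm:AbundanceHubs} relies on Theorem~\ref{thm:SmallDegreeVertices}, and you deliberately extract only $D_n(v)\to\infty$ from $Z_v>0$ rather than invoking the full almost-sure convergence of Theorem~\ref{thm:DegreeConvergence}. You do, however, take a genuinely different route from the paper's proof of this particular lemma. The paper argues by contradiction: conditioning on the event $\{X_n^{\geq 3}\le K\ \forall n\}$, it deduces $X_n^2\to\infty$ from the diameter lower bound together with $X_n^2 = X_n^{\geq 2}-X_n^{\geq 3}$, argues that this eventually forces two adjacent degree-two vertices at all large times, and then observes that this makes $\Cexp{\Delta X_n^{\geq 3}}{T_n}\ge 1/n$, so $X_n^{\geq 3}$ must escape any bound. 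Your argument is instead the one the paper itself gives later for the stronger Theorem~\ref{thm:InfinityLinearDegree}, which you correctly recognise as a generalisation: once the diameter exceeds $2K$, fix a path of $2K$ edges, use Theorem~\ref{thm:AbundanceHubs} (plus a countable intersection over all edges) to conclude that the hubs on that path form a vertex cover, and bound that cover from below by $K$ via the obvious size-$K$ matching. The trade-off is that your route leans on the heavier Theorem~\ref{thm:AbundanceHubs} and its P\'olya urn coupling, whereas the paper's proof of this lemma is deliberately lighter, needing only the diameter estimate and an elementary Borel--Cantelli-style increment argument; in exchange, your single argument simultaneously establishes the stronger Theorem~\ref{thm:InfinityLinearDegree}.
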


\begin{proof}
By Theorem \ref{thm:Diameter}, the diameter of $T_n$ is $\Theta(\log n)$ almost surely. Thus,
\begin{align}\label{eq:atleast2_as_inf}
    X_n^{\geq 2}\to \infty \text{ almost surely}.
\end{align}
Suppose for a contradiction that $X_n^{\geq3}$ does not go to infinity almost surely. That is, there exists a positive constant $c$ such that 
$$\p{ \forall n \in \N, \  X_n^{\geq3}<\infty  }= c >0.$$
By continuity of probability, this implies that there exists some constant $K$ such that
\begin{align}\label{eq:atleast3_finite_pos_prob}
    \p{ \forall n \in \N, \  X_n^{\geq3} \leq K }\geq \frac{c}{2}>0.
\end{align}
Using the fact that $X_n^{\geq 2}$ goes to infinity almost surely and $X_n^2 = X_n^{\geq 2} - X_n^{\geq 3}$ for all $n$, we have 

\begin{align}\label{eq:degree2_infty_cond_degree3}
     \Cprob{X_n^2 \rightarrow \infty}{\forall n \in \N, \  X_n^{\geq3} \leq K } = 1.
\end{align}
Define $\tau$ to be the smallest time after which, for all $n\geq \tau$, $T_n$ always contains at least two neighbouring vertices each of degree two,
$$\tau := \inf\{m \colon \forall n\geq m, \exists\, u,v \in T_n,  \, u \sim v,\, d_{n}(u) = d_{n}(v) = 2\}.$$
It follows from (\ref{eq:degree2_infty_cond_degree3}) that $\Cprob{\tau < \infty}{\forall n \in \N, \  X_n^{\geq3} \leq K } = 1.$ Note that $\Delta X_n^{\geq 3} = 1$ if and only if vertex $n+1$ attaches to a vertex of degree two. At time $n\geq \tau$ this occurs with probability at least $1/n$. Thus, except for finitely many $n$,
$$ \Cexp{\Delta X_n^{\geq 3}}{ T_n,\, X_n^{\geq 3}\leq K} \geq \frac{1}{n}~;$$
this implies that $\p{\lim_n X_n^{\geq 3}> K}=1$, which contradicts our hypothesis.
\end{proof}

\begin{lem}\label{lem:almostsureconvergencenonleaves} As $n\to \infty$, $X_n^{\geq 2}/\log n\to \infty$ almost surely.
\end{lem}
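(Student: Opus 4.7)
The plan is to use the Doob decomposition of $X_n^{\geq 2}$, showing that the predictable compensator grows strictly faster than $\log n$ almost surely while the martingale fluctuation is of strictly smaller order. Write $X_n^{\geq 2}=X_2^{\geq 2}+M_n+A_n$, where $A_n:=\sum_{k=2}^{n-1}\Cexp{\Delta X_k^{\geq 2}}{T_k}$ is predictable and non-decreasing (since $\Delta X_k^{\geq 2}\in\{0,1\}$), and $M_n:=X_n^{\geq 2}-X_2^{\geq 2}-A_n$ is a martingale with $|\Delta M_k|\leq 1$.

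For the compensator, Lemma~\ref{lem:atleast2_vs_atleast3} gives $\Cexp{\Delta X_k^{\geq 2}}{T_k}\geq X_k^{\geq 3}/(3k)$ for $k\geq 3$, while Lemma~\ref{lem:deg3toinfty} says $X_k^{\geq 3}\to\infty$ almost surely. Therefore, for every $K\in\N$ there is an almost surely finite (random) time $\tau_K$ such that $X_k^{\geq 3}\geq K$ for all $k\geq \tau_K$, which gives $A_n\geq (K/3)\log(n/\tau_K)+O(1)$ for $n\geq \tau_K$. Hence $\liminf_n A_n/\log n\geq K/3$ almost surely for every $K$, and letting $K\to\infty$ yields $A_n/\log n\to\infty$ almost surely.

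For the martingale, I would show $M_n/A_n\to 0$ almost surely using the predictable quadratic variation $\langle M\rangle_n:=\sum_{k=2}^{n-1}\mathrm{Var}(\Delta X_k^{\geq 2}\mid T_k)$, which is non-decreasing and bounded above by $A_n$. On the event $\{\langle M\rangle_\infty<\infty\}$, the martingale $M_n$ is bounded in $L^2$ and hence converges almost surely to a finite limit, so $M_n/A_n\to 0$ because $A_n\to\infty$. On the complementary event $\{\langle M\rangle_\infty=\infty\}$, the law of the iterated logarithm for bounded-increment martingales gives $|M_n|=O(\sqrt{\langle M\rangle_n\log\log\langle M\rangle_n})$ almost surely, so $|M_n|/\langle M\rangle_n\to 0$; combining with $\langle M\rangle_n\leq A_n$ yields $|M_n|/A_n\to 0$ almost surely.

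Combining the two steps, $X_n^{\geq 2}=A_n(1+o(1))$ almost surely, and together with $A_n/\log n\to\infty$ this gives $X_n^{\geq 2}/\log n\to\infty$ almost surely. The main obstacle is controlling $M_n$ relative to its compensator $A_n$: the martingale LIL (or any suitable strong law for square-integrable martingales) handles this cleanly, but one must carefully treat the dichotomy based on whether $\langle M\rangle_\infty$ is finite or infinite. Once that is in place, everything else reduces to assembling the lower bound on $A_n$ coming from Lemmas~\ref{lem:atleast2_vs_atleast3} and~\ref{lem:deg3toinfty}.
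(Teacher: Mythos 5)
Your proof is correct, and it takes a genuinely different route from the paper. The paper's argument is more hands-on: for each fixed $C>0$, it couples the increments $\Delta X_n^{\geq 2}$ to an independent sequence of $\operatorname{Bernoulli}(C/n)$ variables $B_n$ (using the almost-sure eventual bound $\Cexp{\Delta X_n^{\geq 2}}{T_n}\geq C/n$ coming from Lemmas~\ref{lem:atleast2_vs_atleast3} and \ref{lem:deg3toinfty}), then applies Kolmogorov's strong law on exponentially-sized blocks to deduce $\sum_{i\leq n}B_i/\log n\to C$, and finally lets $C\to\infty$. Your Doob-decomposition approach, with the lower bound $A_n/\log n\to\infty$ for the compensator and the martingale-vs-compensator control $M_n=o(A_n)$, is the more systematic version of the same idea. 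The key step $M_n/A_n\to 0$ could in fact be streamlined by invoking the Lévy extension of the Borel--Cantelli lemma for adapted $\{0,1\}$-valued increments, which gives $X_n^{\geq 2}/A_n\to 1$ a.s.\ directly on $\{A_\infty=\infty\}$; this is equivalent to your LIL dichotomy but avoids checking the Stout regularity conditions. Both approaches buy the same thing: the coupling route is self-contained at the cost of some bookkeeping with the block SLLN, while yours plugs into standard martingale machinery but needs to treat the $\langle M\rangle_\infty<\infty$ and $\langle M\rangle_\infty=\infty$ cases separately (which you do correctly). One small point worth noting: your argument actually proves the stronger statement $X_n^{\geq 2}\sim A_n$ a.s., which the paper's coupling does not immediately give.
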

\begin{proof}
    Fix $C>0$. We will show that 
    \[ \liminf_{n\to \infty} \frac{X_n^{\geq 2}}{\log n}\geq C\text{ almost surely,}\]
    which implies the statement. Conditionally on $T_n$, $\Delta X_n^{\geq 2}$ is a Bernoulli random variable with parameter $\Cexp{\Delta X_n^{\geq 2}}{T_n}$. We prove that $\Cexp{\Delta X_n^{\geq 2}}{T_n}<C/n$ only finitely many times, in order to couple $(\Delta X^{\geq 2}_n)_{n\geq 1}$ to a sequence of independent $\operatorname{Bernoulli}$ random variables with parameter $C/n$. \\
    Let $(U_i,i \ge 1)$ be independent  uniform random variables on $[0,1]$.  Conditionally on $T_n$, construct $T_{n+1}$ from $T_n$ by setting $\Delta X_n^{\geq 2}=1$ if and only if $U_n<\Cexp{\Delta X_n^{\geq 2}}{T_n}$, and then sampling the additional randomness required to construct $T_{n+1}$ conditionally on $T_n$ and on the value of $\Delta X_n^{\geq 2}$. Define a coupling between  $(\Delta X^{\geq 2}_n)_{n\geq 1}$ and $(B_n)_{n\geq 1}$, a sequence of independent Bernoulli random variables with parameter $C/n$, by setting $B_n=1$ if  $U_n<C/n$ and $B_n=0$ otherwise. It is immediate that $\Delta X_n^{\geq 2}\ge B_n$ whenever $\Cexp{\Delta X_n^{\geq 2}}{T_n}\ge C/n$.\\
    Lemma \ref{lem:atleast2_vs_atleast3} states that $\Cexp{\Delta X_n^{\geq 2}}{ T_n}\geq \tfrac{1}{3n} X_n^{\geq 3}$, and by Lemma \ref{lem:deg3toinfty}, $X_n^{\ge 3}\to \infty$ almost surely, thus
    \begin{equation}\label{eq:largeexpectedincrementnonleaves}\Cexp{\Delta X_n^{\geq 2}}{ T_n}=\omega\left(1/n\right)\end{equation}
    almost surely. Therefore, almost surely, $\Cexp{\Delta X_n^{\geq 2}}{T_n}<C/n$ only finitely many times, and also $\Delta X_n^{\geq 2}<B_n$ only finitely many times. In particular 
    \[ \liminf_{n\to \infty} \frac{X_n^{\geq 2}}{\log n}\geq \liminf_{n\to \infty} \frac{\sum_{i=1}^n B_i}{\log n}\, \]
    almost surely. We claim that 
    \begin{equation}\label{eq:conv_Bernoullisum_to_C}\frac{\sum_{i=1}^n B_i}{\log n}\to C\text{ almost surely.}\end{equation}
    
    Indeed, let $(Y_j)_{j\geq 1}$ be a sequence of independent random variables satisfying $Y_j=\sum_{i=\lfloor e^{j-1}\rfloor +1}^{\lfloor e^{j}\rfloor}B_i$. Then $\lim_{j\to\infty}\e{[Y_j]}= C$ and $\E{Y_j^2}\leq 10C^2$. By Kolmogorov's strong law of large numbers \cite[Theorem~3.2.]{Durrett2016},
\[\frac{\sum_{j=1}^n Y_j}{n}\to C\text{ almost surely.}\]

This implies the convergence of \eqref{eq:conv_Bernoullisum_to_C} along the subsequence $(\lfloor e^j\rfloor)_{j\geq 1}$, and, by monotonicity,  \eqref{eq:conv_Bernoullisum_to_C}  follows.
  
\end{proof}
\begin{cor}\label{cor:x2_lbd}
    For $\alpha=(\sqrt{13}-3)/2$ the unique positive solution of $x=\tfrac{1-2x}{1+x}$, for any $\beta \in (0,\alpha)$ and for all $n$ sufficiently large, $\E{X_n^{\ge 3}} \ge \beta \E{X_n^{\ge 2}}$.
\end{cor}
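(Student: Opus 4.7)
The corollary follows essentially by combining the two preceding lemmas, so my plan is to show that the additive $\log n$ error in Lemma~\ref{lem:atleast3large_old} becomes negligible compared to $\E{X_n^{\ge 2}}$ once we know that $X_n^{\ge 2}$ grows faster than $\log n$.

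First, fix $\beta\in(0,\alpha)$ and pick any $\beta'$ with $\beta<\beta'<\alpha$. By Lemma~\ref{lem:atleast3large_old} applied with $\beta'$, there exists a constant $c=c(\beta')>0$ such that, for every $n$,
\[
\E{X_n^{\ge 3}}\ \ge\ \beta'\,\E{X_n^{\ge 2}}\ -\ c\log n.
\]
The task then reduces to verifying that the slack $(\beta'-\beta)\E{X_n^{\ge 2}}$ dominates $c\log n$ for all sufficiently large $n$, i.e.\ that $\E{X_n^{\ge 2}}/\log n\to\infty$.

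To obtain this, I would invoke Lemma~\ref{lem:almostsureconvergencenonleaves}, which says $X_n^{\ge 2}/\log n\to\infty$ almost surely. Since $X_n^{\ge 2}\ge 0$, Fatou's lemma gives
\[
\liminf_{n\to\infty}\frac{\E{X_n^{\ge 2}}}{\log n}\ \ge\ \E{\liminf_{n\to\infty}\frac{X_n^{\ge 2}}{\log n}}\ =\ \infty,
\]
so $\E{X_n^{\ge 2}}=\omega(\log n)$.

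Consequently, for all $n$ large enough that $\E{X_n^{\ge 2}}\ge c\log n/(\beta'-\beta)$, we conclude
\[
\E{X_n^{\ge 3}}\ \ge\ \beta'\E{X_n^{\ge 2}}-c\log n\ \ge\ \beta'\E{X_n^{\ge 2}}-(\beta'-\beta)\E{X_n^{\ge 2}}\ =\ \beta\,\E{X_n^{\ge 2}},
\]
as claimed. There is no genuine obstacle here: the content of the corollary is entirely packaged in the two preceding lemmas, and the only thing to be careful about is the swap from almost sure to expected divergence, which is handled in one line by Fatou.
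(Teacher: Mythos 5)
Your proposal is correct and follows essentially the same route as the paper's one-sentence argument, combining Lemma~\ref{lem:atleast3large_old} with Lemma~\ref{lem:almostsureconvergencenonleaves}; you are merely more explicit, filling in the two details the paper elides — applying the lemma with an intermediate $\beta'\in(\beta,\alpha)$ to leave room to absorb the $c\log n$ term, and invoking Fatou to pass from the a.s.\ divergence $X_n^{\ge 2}/\log n\to\infty$ to divergence of $\E{X_n^{\ge 2}}/\log n$.
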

    By Lemma~\ref{lem:atleast3large_old} we have $\e{X_n^{\geq 3}}\geq \beta \e{X_n^{\geq 2}}-c\log(n)$, and by Lemma~\ref{lem:almostsureconvergencenonleaves} $X_n^{\ge 2}=\omega(\log n)$ almost surely; Corollary~\ref{cor:x2_lbd} follows.

\begin{prop}\label{prop:urns}
    For $\alpha=(\sqrt{13}-3)/2$ the unique positive solution of $x=\tfrac{1-2x}{1+x}$, for any $0<\delta<\alpha/3 \approx 0.101$ we have $n^{-\delta}X_n^{\geq 2}\to \infty$ almost surely.
\end{prop}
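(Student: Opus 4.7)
The plan is to convert the conditional expected-growth estimate
$$ \Cexp{\Delta X_n^{\geq 2}}{T_n} \geq \frac{\lambda}{n} X_n^{\geq 2}, $$
where $\lambda = \beta'/3$ for some $\beta' \in (3\delta, \alpha)$, into an almost sure polynomial lower bound on $X_n^{\geq 2}$ via a coupling to a P\'olya-like urn that is analysed through an embedding into a Yule process.

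To produce that estimate, first pick $\beta \in (\beta', \alpha)$. By Lemma~\ref{lem:atleast3large_old} applied with this $\beta$, almost surely $X_n^{\geq 3} \geq \beta X_n^{\geq 2} - c\log n$ for all $n$ large enough. Combined with Lemma~\ref{lem:almostsureconvergencenonleaves}, which ensures $X_n^{\geq 2} = \omega(\log n)$ almost surely, the $-c\log n$ term is absorbed into the slack between $\beta$ and $\beta'$, so $X_n^{\geq 3} \geq \beta' X_n^{\geq 2}$ eventually almost surely. Plugging this into Lemma~\ref{lem:atleast2_vs_atleast3} then yields the displayed growth-rate bound, almost surely for all $n$ sufficiently large.

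Next, let $\tau$ be the almost surely finite random time after which that growth-rate bound holds and $X_\tau^{\geq 2} \geq 1$. Conditional on $\tau$ and $m := X_\tau^{\geq 2}$, I would define a P\'olya-like urn $(B_n)_{n \geq \tau}$ by $B_\tau = m$ and $B_{n+1} = B_n + 1$ with probability $\min(\lambda B_n/n, 1)$ (and $B_{n+1} = B_n$ otherwise). Using a shared uniform random variable at each step to drive both $\Delta X_n^{\geq 2}$ and $\Delta B_n$, one couples them so that $X_n^{\geq 2} \geq B_n$ for all $n \geq \tau$. To finish, I would analyse $(B_n)$ via the Athreya--Karlin embedding: $(B_n)_{n \geq \tau}$ is distributed as $(N(\lambda \log(n/\tau)))_{n \geq \tau}$, where $N(\cdot)$ is a continuous-time Yule process of rate one starting from $m$ particles. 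Classical results on Yule processes give $e^{-t} N(t) \to W$ almost surely, with $W$ almost surely positive (indeed Gamma$(m,1)$-distributed), so $n^{-\lambda} B_n$ converges almost surely to a positive random variable. In particular, $n^{-\delta} B_n \to \infty$ almost surely for every $\delta < \lambda$, and the coupling transfers this to $X_n^{\geq 2}$.

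The main obstacle will be handling the fact that the growth-rate bound of Step~1 holds only eventually almost surely, so one cannot couple to the urn from time $n=2$. This forces the introduction of the random starting time $\tau$ and restarting the urn coupling from there, in the same spirit as the restart argument used in the proof of Theorem~\ref{thm:AbundanceHubs}. A purely martingale-based alternative that uses $M_n = B_n / \prod_{k=\tau}^{n-1}(1+\lambda/k)$ would readily give $L^2$-convergence to a nonnegative limit with positive mean, but ruling out the event $\{M_\infty = 0\}$ still requires an independent-subpopulation argument (equivalent to the Yule embedding), so the embedding route appears cleanest.
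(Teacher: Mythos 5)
Your overall strategy --- derive the drift estimate $\Cexp{\Delta X_n^{\geq 2}}{T_n} \geq \lambda X_n^{\geq 2}/n$ beyond an a.s.\ finite time $\tau$, then couple $X_n^{\geq 2}$ to a Markov chain $(B_n)$ with $\Cprob{\Delta B_n = 1}{B_n} = \lambda B_n/n$ via shared uniforms --- is exactly the route the paper takes. The gap is in your analysis of $(B_n)$: the claimed distributional identity $(B_n)_{n\geq\tau} \eqdist (N(\lambda\log(n/\tau)))_{n\geq\tau}$ is false. Your chain increases by exactly $0$ or $1$ per step, whereas a continuous-time Yule process sampled along the grid $t_n = \lambda\log(n/\tau)$ can increase by two or more between consecutive grid points, and even the single-jump probabilities do not match: taking $\tau=m=1$ gives $\p{B_2\geq 2}=\lambda$ while $\p{N(\lambda\log 2)\geq 2}=1-2^{-\lambda}\neq\lambda$. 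This is not a harmless discrepancy, because the whole point of the embedding is to rule out $\{M_\infty=0\}$ for the martingale $M_n = B_n\prod_{k=\tau}^{n-1}(1+\lambda/k)^{-1}$, and that genuinely requires a branching decomposition; the bare chain $(B_n)$ carries none. In particular, the ``independent subpopulation'' decomposition you allude to fails for $(B_n)$: starting $m$ independent copies from state $1$ and summing does not reproduce the chain started from $m$, since the sum can jump by more than one and has a different one-step law.

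The paper fills this gap by realizing $B_n$ as the black-ball count of a \emph{triangular generalized P\'olya urn}: pick $\gamma$ rational in $(\delta,\beta/3)$ and $M$ with $\gamma^{-1}M\in\mathbb{N}$; at time $n\geq t$ the urn holds $n\gamma^{-1}M$ balls, $B_n$ of them black, and a drawn black ball contributes $M$ black and $(\gamma^{-1}-1)M$ white while a drawn white ball contributes $\gamma^{-1}M$ white, so $\Cprob{\Delta B_n = M}{B_n}=\gamma B_n/(nM)$, i.e.\ $B_n/M$ is exactly your chain. The urn formulation supplies the Athreya--Karlin branching structure, and Janson's results on triangular urns (\citet[Theorem~1.3(v) and Theorem~8.7]{Urnschemes}) give $n^{-\gamma}B_n\to Z$ a.s.\ with $Z>0$ a.s., which is precisely the conclusion your Yule-embedding step was meant to deliver. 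So: same skeleton, but you need to replace the exact Yule time-change claim with the triangular-urn realization (or some other argument that legitimately manufactures the branching decomposition).
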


\begin{proof}
By Lemmas \ref{lem:atleast2_vs_atleast3} and \ref{lem:atleast3large_old}, for all $\beta \in (0,\alpha)$ there exists a $c>0$ such that 

\begin{equation}\label{eq:Xn2_grows_as_polya}
    \p{\Cexp{\Delta X_n^{\geq 2}}{ T_n}< \frac{\beta}{3n}(X_n^{\geq 2 } -c\log n) \text{ infinitely often}} = 0~.
\end{equation}
Fix $\delta$ such that $0<\delta<\beta/3$ and fix $\gamma$ rational such that $\delta<\gamma<\beta/3$. Lemma \ref{lem:almostsureconvergencenonleaves} states that $X_n^{\geq 2 }=\omega(\log n)$ almost surely, therefore, almost surely there are only finitely many $n$ such that

\[\frac{\beta}{3}(X_n^{\geq 2 } -c\log n)<\gamma X_n^{\geq 2 },\]
Define the time $\tau$ as

\[\tau=3 \vee \sup\left\{n \geq 1:\Cexp{\Delta X_n^{\geq 2}}{T_n}< \frac{\gamma}{n}X_n^{\geq 2 }\right\}.\]
As a consequence of \eqref{eq:Xn2_grows_as_polya}, $\tau<\infty$ almost surely. For $n> \tau$, by the definition of $\tau$, the probability that $X_n^{\geq 2}$ increases at time $n$ is bounded from below by $\gamma X_n^{\geq 2}/n$. It is therefore natural to compare $X_n^{\geq 2}$ to a generalised P\'olya urn. We first introduce an urn process, containing $B_n$ black balls, and show that $n^{-\delta}B_n\to \infty$ almost surely. Conditionally on $\tau=t$, we then couple the sequences $(X^{\geq 2}_n)_{n\geq t}$ and $(B_n)_{n\geq t}$ and conclude the proof of Proposition~\ref{prop:urns}.

To introduce the urn process, let $M>0$ be an integer such that $\gamma^{-1}M$ is a positive integer, and let $t>0$ be another integer. Consider the urn process started at time $t$ with $M$ black balls and $t\gamma^{-1}M - M$ white balls. At every time step, draw a ball from the urn uniformly at random and return it to the urn together with $\gamma^{-1}M$ additional balls. If the drawn ball is white, all of the additional balls are white. If the drawn ball is black, $M$ of the additional balls are black and the other $(\gamma^{-1}-1)M$ balls are white. Denote by $B_n$ the number of black balls in the urn at time $n$. Then at time $n$, the number of black balls $B_n$ increases by $M$ with probability $B_n/(n\gamma^{-1}M)$. The described urn is \emph{triangular} since if a white ball is drawn only white balls are added to the urn. The asymptotic behaviour of triangular urn processes has been studied by \citet{Urnschemes}. Theorem 1.3.(v) in \cite{Urnschemes} implies that $n^{-\gamma}B_n$ converges almost surely to some random variable $Z$, and Theorem 8.7.\ in \cite{Urnschemes} shows that $Z$ puts no mass on $0$, so $n^{-\delta}B_n\to \infty$ almost surely as $\delta < \gamma$.

We now introduce a coupling satisfying that $(X_n^{\geq 2})_{n\geq t}$ grows at least as fast as $(B_n)_{n\geq t}$, on the event $\tau < t$. To formalise this coupling, note that conditionally on $\tau=t$, if $n\ge t$, then $\Cexp{\Delta X_n^{\geq 2}}{T_n} \geq \frac{\gamma}{n} X_n^{\geq 2}.$ Since $\Delta X_n^{\geq 2}\in\{0,1\}$,
$$\Cprob{\Delta(MX_n^{\geq 2})=M}{T_n}\geq \frac{MX_n^{\geq 2}}{n\gamma^{-1}M}~.$$
Remark that $MX_t^{\ge 2}$ is at least $M$. Let $(U_n)_{n\geq 1}$ be a sequence of independent uniform random variables on $[0,1]$. Conditionally on $T_n$, construct $T_{n+1}$ from $T_n$ by setting $\Delta X_n^{\geq 2} =1$ if and only if $U_n \leq \Cexp{\Delta X_n^{\geq 2}}{T_n}$ and sampling the remaining randomness in $T_{n+1}$ conditional on $T_n$ and the value of $\Delta X_n^{\ge 2}$. Let $B_t = 0$ and for $n\geq t$ let $\Delta B_n = M$ if $B_n/(n\gamma^{-1}M)\le U_n$ and $\Delta B_n = 0$ otherwise. Then $(B_n)_{n\geq t}$ is distributed as the number of black balls in the P\'olya urn described above. We already noted that $n^{-\delta}B_n\to \infty$ almost surely, and by our coupling we have that on the event $\tau<t$, $MX_n^{\ge 2}\geq B_n$ for all $n\geq t$. Thus, for $\varepsilon>0$

$$\p{n^{-\delta}X_n^{\ge 2} \to \infty}>\p{\tau<t}\ge 1-\varepsilon~.$$ 
Since $\tau$ is almost surely finite, $\varepsilon$ can be chosen arbitrarily small, by taking $t$ large, which concludes the proof.

\end{proof}

\begin{lem}\label{lem:atleast3small}
Let $\gamma=3-2\sqrt{2}\approx 0.172$ be the unique positive solution to $x=\tfrac{1-5x}{1-x}$. Then for any $\beta \in (1-\gamma,1)$ there exists $C>0$ such that \[\p{X_n^{\geq 3} > \beta X_n^{\geq 2} + C\log n \text{ infinitely often}} = 0\] and for all $n$, $\E{X_n^{\geq 3}} \leq \beta \E{X_n^{\geq 2}} + C\log n$.

\end{lem}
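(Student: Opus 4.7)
The plan is to apply Proposition~\ref{prop:`lines'} to the processes $X_k := X_k^{\geq 2}$ and $Y_k := X_k^{\geq 3}$, but to use the Proposition ``in reverse'' by taking $\alpha > 1$. Concretely, I would set $\alpha := (1+\sqrt{2})/2$, which is the unique positive solution of $4\alpha(\alpha-1) = 1$; a direct computation gives $1/\alpha = 2\sqrt{2} - 2 = 1 - \gamma$, so that values $\beta \in (1-\gamma, 1)$ correspond bijectively to values $\beta^{-1} \in (1, \alpha) \subset (0, \alpha)$. Applied with this $\alpha$, Proposition~\ref{prop:`lines'} would deliver $X_n^{\geq 2} \geq \beta^{-1} X_n^{\geq 3} - C_*\log n$ eventually almost surely, and the analogous bound in expectation; multiplying through by $\beta$ and absorbing $\beta$ into a new constant $C = \beta C_*$ yields exactly the claimed upper bound on $X_n^{\geq 3}$.

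To verify the hypotheses of the Proposition, I would first note that $(X_k^{\geq 2})$ and $(X_k^{\geq 3})$ are non-decreasing integer processes adapted to the filtration generated by $(T_k)$, since every vertex's degree is non-decreasing. At each step $k$, only the degree of $W_k$ changes: if $W_k$ is a leaf then $(\Delta X_k^{\geq 2}, \Delta X_k^{\geq 3}) = (1,0)$; if $W_k$ has degree two then $(\Delta X_k^{\geq 2}, \Delta X_k^{\geq 3}) = (0,1)$; otherwise both are zero. In particular $\Delta X_k^{\geq 2} + \Delta X_k^{\geq 3} \in \{0,1\}$, as required. The main estimate is the inequality $\Cexp{\Delta X_k^{\geq 2}}{T_k} \geq \alpha\, \Cexp{\Delta X_k^{\geq 3}}{T_k}$ on the event $\{X_k^{\geq 2} < \alpha X_k^{\geq 3}\}$, which (using $X_k^{\geq 2} = X_k^2 + X_k^{\geq 3}$) rearranges to $\{X_k^2 < (\alpha-1)X_k^{\geq 3}\}$. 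Combining the lower bound $\Cexp{\Delta X_k^{\geq 2}}{T_k} \geq X_k^{\geq 3}/(3k)$ from Lemma~\ref{lem:atleast2_vs_atleast3} with the upper bound $\Cexp{\Delta X_k^{\geq 3}}{T_k} \leq 4 X_k^2/(3k)$ from \eqref{eq:diff2} gives, on this event,
\[
\frac{\Cexp{\Delta X_k^{\geq 2}}{T_k}}{\Cexp{\Delta X_k^{\geq 3}}{T_k}} \;\geq\; \frac{X_k^{\geq 3}}{4 X_k^2} \;>\; \frac{1}{4(\alpha-1)} \;=\; \alpha,
\]
where the final equality is the defining identity $4\alpha(\alpha-1)=1$. (The degenerate case $X_k^2=0$ is trivial since then $\Cexp{\Delta X_k^{\geq 3}}{T_k}=0$.) Both increment bounds are valid for every $k \geq 5$, so the Proposition's ``except at finitely many times'' clause is satisfied deterministically.

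The only real obstacle is conceptual rather than computational: one must notice that Proposition~\ref{prop:`lines'} is usable with $\alpha > 1$, and that its lower-bound conclusion on $X_n$ in terms of $Y_n$ then translates into an upper-bound statement on $Y_n$ in terms of $X_n$. Once this is recognized, the algebraic content reduces to identifying $\alpha = (1+\sqrt 2)/2$ as the extremal value for which the bounds of \eqref{eq:diff2} and Lemma~\ref{lem:atleast2_vs_atleast3} just close up, together with the matching identity $\alpha^{-1} = 1-\gamma$ that pins down the threshold in $\beta$ appearing in the lemma.
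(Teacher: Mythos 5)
Your proof is correct and takes essentially the same approach as the paper: both apply Proposition~\ref{prop:`lines'} with $(X_k^{\geq 2}, X_k^{\geq 3})$ in the roles of $(X_k, Y_k)$ and $\alpha = 1/(1-\gamma) = (1+\sqrt 2)/2$, verifying the hypothesis by combining Lemma~\ref{lem:atleast2_vs_atleast3} with \eqref{eq:diff2}. The algebra is organized slightly differently (the paper phrases the threshold condition as $X_n^2 < \gamma X_n^{\geq 2}$ where you use $X_k^2 < (\alpha-1)X_k^{\geq 3}$, and the paper characterizes $\gamma$ via $x = (1-5x)/(1-x)$ rather than $\alpha$ via $4\alpha(\alpha-1)=1$), but these are equivalent and lead to the same bound.
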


\begin{proof}
We apply Proposition~\ref{prop:`lines'} to the sequences $X_k = X_n^{\geq 2}$ and $Y_k = X_n^{\geq 3}$. Note that $\Delta X_n^{\geq 2} + \Delta X_n^{\geq 3}\in \{0,1\}$. It remains to show that there exists $\gamma>0$ such that for all $n$, either 
\begin{align*}
    X_n^{\ge 2}&\ge \frac{1}{1-\gamma} X_n^{\ge 3}\text{ or}\\
    \Cexp{\Delta X_n^{\ge 2}}{T_n}&\ge \frac{1}{1-\gamma}\Cexp{\Delta X_n^{\ge 3}}{ T_n}. 
\end{align*}
Lemma~\ref{lem:atleast3small} follows then directly by applying Proposition~\ref{prop:`lines'}. Suppose that $X_n^{\ge 2}< \frac{1}{1-\gamma} X_n^{\ge 3}$. Since $X_n^2=X_n^{\ge 2}-X_n^{\ge 3}$, \eqref{eq:diff2} gives that 
\[ \Cexp{\Delta X_n^{\ge 3}}{ T_n} \le \frac{4}{3n}(X_n^{\ge 2}-X_n^{\ge 3})<\frac{4\gamma}{3n}X_n^{\ge 2}. \]
From Lemma \ref{lem:atleast2_vs_atleast3} we see that \[\Cexp{\Delta X_n^{\geq 2}}{ T_n}\geq \tfrac{1}{3n}X_n^{\geq 3}>  \tfrac{1-\gamma}{3n}X_n^{\geq 2}.\] 
Therefore,
\[ \Cexp{\Delta X_n^{\geq 2} }{ T_n}>\frac{1-\gamma}{4\gamma}\Cexp{\Delta X_n^{\ge 3}}{T_n} . \]
The statement follows from the choice of $\gamma$.
\end{proof}

\begin{prop}\label{lem:upperbound_nonleaves}
Let $\delta=1-\gamma/2$, where $\gamma=3-2\sqrt{2}$. Then, for any $\delta<\lambda<1$, 
$$n^{-\lambda}X_n^{\geq 2}\to 0,$$
almost surely. 
\end{prop}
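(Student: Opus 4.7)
The plan is to combine the linear upper envelope of Lemma~\ref{lem:atleast3small} with the drift inequality \eqref{eq:diff1} to obtain a sharp multiplicative drift bound for $X_n^{\geq 2}$, and then convert it into an almost sure polynomial upper bound via a standard Robbins--Siegmund argument.

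Fix $\lambda\in(\delta,1)$. As $\beta$ varies over $(1-\gamma,1)$, the quantity $(1+\beta)/2$ varies over $(1-\gamma/2,1)=(\delta,1)$, so we may pick $\beta$ with $\delta':=(1+\beta)/2\in(\delta,\lambda)$. For this $\beta$, Lemma~\ref{lem:atleast3small} yields a constant $C>0$ and guarantees that almost surely there is a (random) time $\tau<\infty$ with $X_n^{\geq 3}\le \beta X_n^{\geq 2}+C\log n$ for all $n\ge\tau$. Substituting into \eqref{eq:diff1} gives, for every $n\ge\tau$,
\[\Cexp{\Delta X_n^{\geq 2}}{T_n}\le \frac{\delta'}{n}X_n^{\geq 2}+\frac{C\log n}{2n}.\]

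Next, for a deterministic $n_0\in\N$, set $a_n:=\prod_{k=n_0}^{n-1}(1+\delta'/k)$, so that $a_n\sim c\,n^{\delta'}$ for some $c>0$, and define $M_n:=X_n^{\geq 2}/a_n$. On the event $\{\tau\le n_0\}$, the identity $a_{n+1}/a_n=1+\delta'/n$ together with the drift bound yields
\[\Cexp{M_{n+1}}{T_n}\le M_n+\frac{C\log n}{2n\,a_{n+1}}=M_n+O\!\left(\frac{\log n}{n^{1+\delta'}}\right).\]
Since $\delta'>0$ the correction is summable, so $\tilde M_n:=M_n+\sum_{k\ge n}O(\log k/k^{1+\delta'})$ is a nonnegative supermartingale on $\{\tau\le n_0\}$ and therefore converges almost surely; hence so does $M_n$. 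In particular $X_n^{\geq 2}=O(n^{\delta'})$ almost surely on $\{\tau\le n_0\}$. Sending $n_0\to\infty$ and using $\p{\tau<\infty}=1$, the bound $X_n^{\geq 2}=O(n^{\delta'})$ holds almost surely. Because $\lambda>\delta'$, we conclude
\[n^{-\lambda}X_n^{\geq 2}=O(n^{\delta'-\lambda})\to 0\quad\text{almost surely.}\]

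The heart of the argument is the choice of the drift constant $\delta'=(1+\beta)/2$, which can be made arbitrarily close to (but strictly greater than) $\delta=1-\gamma/2$ by virtue of Lemma~\ref{lem:atleast3small}; once this sharp multiplicative drift is available, the martingale step is routine. The only real technicality is handling the random starting time $\tau$, which is dealt with by conditioning on $\{\tau\le n_0\}$ and then sending $n_0\to\infty$.
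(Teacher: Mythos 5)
Your proof is correct and reaches the same conclusion, but by a genuinely different route from the paper. The first step is the same in both: combine \eqref{eq:diff1} with Lemma~\ref{lem:atleast3small} to obtain the eventual drift bound $\Cexp{\Delta X_n^{\ge 2}}{T_n}\le \tfrac{\delta'}{n}X_n^{\ge 2}+O(\log n/n)$ with $\delta'=(1+\beta)/2$ tunable to any value in $(\delta,1)$. From there the paper invokes a generalised P\'olya urn comparison ``mimicking the proof of Proposition~\ref{prop:urns}'', i.e.\ a coupling to a triangular urn and Janson's convergence theorems for such urns. You instead carry out a direct Robbins--Siegmund-type argument: normalize by $a_n=\prod_{k=n_0}^{n-1}(1+\delta'/k)\sim c\,n^{\delta'}$, check that $M_n=X_n^{\ge 2}/a_n$ is a supermartingale up to a summable correction, and conclude from almost sure convergence of nonnegative supermartingales. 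This is more elementary and more self-contained: it avoids the urn machinery entirely, and it absorbs the $C\log n$ error term transparently (the paper is terse on this point, writing only ``omitting the details of the coupling'').

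One technicality to repair in the write-up: the random time $\tau$ (the last time the envelope $X_n^{\ge 3}\le\beta X_n^{\ge 2}+C\log n$ fails) is not a stopping time, and $\{\tau\le n_0\}\notin\cF_{n_0}$, so ``$\tilde M_n$ is a nonnegative supermartingale on $\{\tau\le n_0\}$'' is not quite a statement you can feed directly into the martingale convergence theorem. The standard fix is to introduce the stopping time $\sigma_{n_0}:=\inf\{n\ge n_0: X_n^{\ge 3}>\beta X_n^{\ge 2}+C\log n\}$, note that the drift bound holds on the $\cF_n$-measurable event $\{n<\sigma_{n_0}\}$, so the stopped process $\tilde M_{n\wedge\sigma_{n_0}}$ is a genuine nonnegative supermartingale and converges almost surely, and then observe that $\{\sigma_{n_0}=\infty\}=\{\tau\le n_0\}$ and that on this event the stopped process coincides with $\tilde M_n$. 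Sending $n_0\to\infty$ and using $\p{\tau<\infty}=1$ then gives $X_n^{\ge 2}=O(n^{\delta'})$ almost surely, exactly as you wrote. This is a routine repair, not a gap in the idea.
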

\begin{proof}
By \eqref{eq:diff1}, $\Cexp{\Delta X_{n}^{\geq 2} }{ T_n } \leq \frac{1}{2n}X_n^{\geq 2}+\frac{1}{2n}X_n^{\ge 3}$. Combining this with Lemma~\ref{lem:atleast3small} gives that for any $\beta\in (1-\gamma,1)$, there exists a $C>0$ such that 
    \[\p{\Cexp{\Delta X_{n}^{\geq 2} }{  T_n } >\frac{1+\beta}{2n}X_n^{\geq 2}+C \log n\text{ infinitely often}}=0.\]
By mimicking the proof of Proposition~\ref{prop:urns}, we can compare $X_n^{\geq 2}$ to a generalised P\'olya urn and obtain an upper bound on $\Delta X_{n}^{\geq 2}$. Omitting the details of the coupling, we conclude that for $\lambda \in (\frac{1+\beta}{2}, 1)$ it holds that  $\p{n^{-\lambda}X_n^{\geq 2} \to 0}=1$.
\end{proof}

\begin{proof}[Proof of Theorem~\ref{thm:SmallDegreeVertices} and \ref{thm:all_finite_deg_large}]
The second part of Theorem~\ref{thm:SmallDegreeVertices} follows from Proposition~\ref{lem:upperbound_nonleaves}, by noting that $X^{\geq k}_n\leq X^{\geq 2}_n$ and therefore, for any  $k\geq 2$, 

$$n^{-\lambda}X^{\geq k}_n\to 0 \text{ a.s.}$$
The upper bound in Theorem~\ref{thm:all_finite_deg_large} follows directly since $X_n^{\geq k+1} \leq X_n^{\geq k}$ for all $k$.  

By Proposition~\ref{prop:urns}, $\lim_n n^{-\delta}X_n^{\geq 2}= \infty$ almost surely. We prove the remaining cases in  the first part of Theorem \ref{thm:SmallDegreeVertices} and the lower bound of Theorem \ref{thm:all_finite_deg_large}, by using induction to prove that, almost surely, for all $k\geq 2$,

\begin{enumerate}
    \item[(i)] there exists a positive constant $c_k$ such that $\liminf_n X_n^{\geq k+1}/X_n^{\geq k}>c_k$, and
    \item[(ii)] $\lim_n n^{-\delta}X_n^{\geq k+1}=\infty$.
\end{enumerate}
The fact that $\lim_n n^{-\delta}X_n^{\geq 2}= \infty$ almost surely and Lemma~\ref{lem:atleast3large_old} imply that (i) holds for $k=2$ for any $0<c_2<\alpha$. This then also implies (ii) for $k=2$.

Now, fix $k\geq3$ and suppose that the induction hypothesis holds for all $2\leq \ell\leq k-1$. Let $b_k\in (0,1)$ be the solution to

\[ \frac{b_k}{1-2b_k}=\frac{c_{k-1}(k-1)}{k(k-3/2)} ,\]
and fix $0<c_k < b_k$. We claim that for all $n$, either

\begin{align*}X^{\geq k+1}_n&\geq b_k X^{\geq k}_n\text{ or} \\
 \Cexp{\Delta X^{\geq k+1}_{n}}{ T_n} & \geq b_k \Cexp{\Delta X^{\geq k}_{n}}{ T_n}\end{align*} 
 almost surely, except at finitely many times. If this holds, then applying Proposition \ref{prop:`lines'} gives us that there exists $C>0$ such that

 \[\p{X_n^{\geq k+1} < c_k X_n^{\geq k} - C\log n \text{ infinitely often}} = 0.\]
 By the induction hypothesis, $n^{-\delta}X_n^{\geq k}\to \infty$ and so
 
 $$\p{X_n^{\geq k+1}/{X_n^{\geq k}}< c_k \text{ infinitely often}} =0~.$$
This implies that (i) holds at step $k$, which in turn implies part (ii).

\noindent It remains to prove the claim. Suppose that \[X^{\geq k+1}_n < b_k X^{\geq k}_n~.\]
From \eqref{eq:diff4}, $b_k X^{\geq k}_n > X_n^{k,>k}$, which combined with \eqref{eq:diff3} implies that
\[X_n^k< b_k X^{\geq k}_n +X_n^{k,\leq k}~. \]
Now, using \eqref{eq:diff5} gives 
\[\Cexp{\Delta X^{\geq k+1}_{n}}{ T_n} > \frac{k-1}{kn} \left( X_n^k-b_k X^{\geq k}_n\right) = \frac{k-1}{kn}\left( (1-b_k)X_n^{\geq k}-X^{\geq k+1} \right)~,\]
where the equality holds since $X_n^k=X_n^{\geq k}-X_n^{\geq k+1}$. Our assumption $X^{\geq k+1}_n < b_k X^{\geq k}_n$ then gives

\[ \Cexp{\Delta X^{\geq k+1}_{n}}{ T_n} \geq \frac{k-1}{kn} (1-2b_k)X_n^{\geq k}~.\]

The induction hypothesis for $k-1$ implies that almost surely $X_n^{\geq k}>c_{k-1}X_n^{\geq k-1}\ge c_{k-1} X_n^{k-1}$, except at finitely many times. Therefore,
\[ \Cexp{\Delta X^{\geq k+1}_{n}}{ T_n} \geq \frac{c_{k-1}(k-1)}{kn}  (1-2b_k)X_n^{k-1}~\]
except at finitely many times. But \eqref{eq:upperbound_change} implies that for all $n$ sufficiently large we have that $ X_n^{k-1}\ge \tfrac{k-3/2}{n}\Cexp{\Delta X_n^{\ge k}}{T_n}$, so we conclude that, except at finitely many times

\[ \Cexp{\Delta X^{\geq k+1}_{n}}{ T_n} \geq \frac{c_{k-1}(k-1)}{k(k-3/2)} (1-2b_k)\Cexp{\Delta X^{\geq k}_{n}}{ T_n}~. \]
This proves the claim by our choice of $b_k$, which concludes the proof. 
\end{proof}

\section{Open questions and future directions}\label{sec:questions}
We conclude with some open questions about the random friend tree.
\begin{enumerate}
    \item In Theorem \ref{thm:SmallDegreeVertices} we prove that, for some $0.1< \delta <\lambda< 0.9$, almost surely $n^{\delta} \ll X_n^{\geq 2} \ll n^{\lambda}$. A question of interest would be whether the gap between the upper and lower bound can be closed and whether, for some $\mu>0$ and some random variable $X$ with non-trivial support, $n^{-\mu}X_n^{\geq 2} \to X$ almost surely. Simulations by \citet{MR3683821} suggest that $X_n^{\geq 2}$ grows as $n^{\mu}$, with $\mu \approx 0.566$. 
    \item We prove that, for fixed $k$, the number of vertices with degree at least $k+1$ is of the same order as the number of vertices with degree at least $k$, see Theorem~\ref{thm:all_finite_deg_large}.  Can we prove, for fixed $k$, that the number of degree-$k$ nodes is of the same order as the the number of degree-$(k+1)$ nodes? Does it hold that $\limsup_{n\to\infty}\frac{X_n^{\geq k}}{X_n^{\geq 2}}$ goes to $0$ as $k$ goes to infinity? Or, informally, are most of the non-leaves vertices of bounded degree?
    \citet{MR3683821} conjecture that for each $k$, $\smash{\frac{X_n^{k}}{X_n^{\geq 2}}}$ has an almost sure limit  that is $\Theta(k^{-(1+\mu)})$.
    \item   Is $\p{Z_u>0}$ decreasing in $u$? More generally, does $Z_u$ stochastically dominate $Z_v$ for $u<v$?
    \item Does it hold that $\sum_{i\ge 1}Z_i=1$ almost surely?
    \item We know that every edge contains a vertex of linear degree, but the diameter of the tree grows logarithmically, so there must be connected subtrees consisting of low-degree vertices whose linear growth has not kicked in yet. This is illustrated by the proof of  Proposition \ref{prop:leafdepth}, which shows that there are paths of length $\Theta(\log(n)/\log\log(n))$ that  consist of just degree $2$ vertices. It would be interesting to get a better understanding of the law of these exceptional substructures that contain most of the low-degree vertices. What does the forest induced by the vertices of degree at most $N$, for large $N$, look like?  Do these subtrees look like `young' friend trees?
    \item A natural extension of the model is to attach the new vertex to multiple, say $m$, vertices. There are two variants: either $V_n$ is a uniformly random vertex and the new vertex $n+1$ attaches to $m$ independently sampled random neighbours of $V_n$, or we let $V_n^{(1)},\dots, V_n^{(m)}$ to be independent random vertices and we let $n+1$ connect to a uniform neighbour of each of the $V_n^{(i)}$. 
    \item A second variation is to  choose $0<p<1$ and connect to $V_n$ with probability $p$ and to $W_n$ with probability $1-p$. This modification makes it much easier for neighbours of high-degree vertices to grow their degree, and in particular, the degree of every vertex goes to infinity almost surely as the tree grows. It would be interesting to see how much of the structure of the random friend tree remains after this modification.
    \item Another final modification of the model, as described in the introduction, is to let $W_n$ be the endpoint of a random walk with $k$ steps rather than $1$ step from $V_n$. In the case of $k =0$, we obtain an URRT and if $k$ is sufficiently large such that the random walk is perfectly mixed, we get a PA tree. One could study how properties such as the size of the largest degree depend on $k$.
\end{enumerate}

\addtocontents{toc}{\SkipTocEntry} 

\section{Appendix}
We prove Proposition~\ref{prop:`lines'}. We start by stating and proving a technical lemma that is needed for its proof. 

We make use of the following straightforward fact. Fix $a,b > 0$ and let $(Y_k)_{k \ge 0}$ be a random walk with steps in $\{-a,b\}$ such that $\E{\Delta Y_k}=c>0$. Then, $\p{\Delta Y_k=b}=(a+c)/(a+b)$ 
and by writing $\tau =\inf\{k \ge 0: Y_k < 0\}$, we have $\p{\tau =\infty}>0$.

\begin{lem}\label{lem:StickyCoupling}
Let $B=(B_k)_{k\geq 0}$ be a random process adapted to a filtration $(\cF_k)_{k\geq 0}$. Suppose that there exist $a,b>0$ such that, almost surely for each $k$, $\Delta B_{k}\in\{-a,0,b\}$. Suppose further that there exists a constant $c>0$ such that 
\[\E{\Delta B_k \mid \cF_k , B_k < 0, \Delta B_k \neq 0 }\ge c~. \]
Then, there exists a constant $C=C(a,b,c)$ such that
\[ \p{B_n<-C\log n\text{ infinitely often}}=0,\]
and $\E{ B_n} \ge -C\log n$.
\end{lem}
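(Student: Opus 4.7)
The plan is to pass to the subsequence of times at which $B$ actually moves and analyse the resulting $\pm\,\{-a,b\}$-valued walk via an exponential supermartingale.

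First, I would define $T_0=0$ and, recursively, $T_{i+1}=\inf\{k>T_i:\Delta B_k\neq 0\}$, and set $Y_i:=B_{T_i}$. Since $B$ is constant on $[T_i,T_{i+1})$, the process $Y$ takes increments in $\{-a,b\}$ and, on the event $\{Y_i<0\}$, the hypothesis combined with the tower property gives $\Cexp{\Delta Y_i}{\cG_i}\ge c$, where $\cG_i:=\cF_{T_{i+1}}$. Writing $p_i:=\Cprob{\Delta Y_i=b}{\cG_i}$, this forces $p_i\ge p^*:=(a+c)/(a+b)$ on $\{Y_i<0\}$. Call a maximal interval $[\sigma,\tau)$ of indices with $Y_i<0$ an \emph{excursion}; since $Y_{\sigma-1}\ge 0$, we always have $Y_\sigma\ge -a$. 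Observe that $B_n\le -m$ forces the last value of $Y$ reached by time $n$ to be $\le -m$, hence the ongoing excursion has depth at least $m$; and the number of excursions started by time $n$ is at most $n$.

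Next, I would build the exponential tilt. Setting $\varphi(\theta):=p^*e^{-\theta b}+(1-p^*)e^{\theta a}$, we have $\varphi(0)=1$ and $\varphi'(0)=-p^*b+(1-p^*)a=-c<0$, so there exists $\theta=\theta(a,b,c)>0$ with $\varphi(\theta)\le 1$. Since $\varphi$ is decreasing in $p$ and $p_i\ge p^*$ on $\{Y_i<0\}$, this yields $\Cexp{e^{-\theta\Delta Y_i}}{\cG_i}\le 1$ on $\{Y_i<0\}$. Consequently, $(M_i:=e^{-\theta Y_i})$ is a supermartingale on any excursion $[\sigma,\tau)$.

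I would then apply optional stopping at $\tau_{m,k}:=(\sigma+k)\wedge\inf\{i\ge\sigma:Y_i\ge 0\text{ or }Y_i\le -m\}$, which is bounded:
\begin{equation*}
\Cexp{M_{\tau_{m,k}}}{\cG_\sigma}\le M_\sigma=e^{-\theta Y_\sigma}\le e^{\theta a}.
\end{equation*}
Since $M_{\tau_{m,k}}\ge e^{\theta m}\I{\min_{\sigma\le i\le \sigma+k}Y_i\le -m}$, letting $k\to\infty$ gives
\begin{equation*}
\Cprob{\text{excursion starting at }\sigma\text{ reaches depth }\ge m}{\cG_\sigma}\le e^{\theta(a-m)}.
\end{equation*}
A union bound over the at most $n$ excursions started by time $n$ then yields $\p{B_n\le -m}\le n\,e^{\theta a}\,e^{-\theta m}$. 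Choosing $C:=3/\theta$ gives $\p{B_n\le -C\log n}\le e^{\theta a}\,n^{-2}$, which is summable, so Borel--Cantelli provides the first conclusion. For the second, split $\E{(-B_n)_+}=\sum_{m\ge 1}\p{B_n\le -m}$ at $m=C\log n$: the first $C\log n$ terms contribute at most $C\log n$, and the tail is bounded by $e^{\theta a}\sum_{m>C\log n}n\,e^{-\theta m}=O(n^{1-\theta C})=o(1)$, giving $\E{B_n}\ge-C'\log n$ after adjusting the constant.

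The only real subtlety is the first step — making sure that after passing to the moving subsequence, the drift hypothesis, originally conditioned on $\{B_k<0,\Delta B_k\ne 0\}$, transfers cleanly to give a uniform lower bound $p_i\ge p^*$ on the upward-jump probability whenever $Y_i<0$; once this reduction is in place, the exponential martingale and union bound are routine. A minor check worth including is that excursions that never end (i.e.\ $Y_i<0$ for all $i\ge\sigma$) cause no issue, because in that case $\min Y_i>-m$ trivially fails to exceed $-m$ unless it already did so at a bounded time, so the Fatou/bounded-stopping argument above is sufficient.
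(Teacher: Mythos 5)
Your argument is correct in spirit and uses a genuinely different technique from the paper. The paper's proof \emph{couples} $B$ with an i.i.d.\ $\{-a,b\}$-valued sequence, bounds $B$ from below by a pared-down process $S$ made of independent negative excursions of a drifted random walk, and then deduces geometric tails for the excursion minima from the gambler's-ruin fact that successive record lows of such a walk occur at most a Geometric number of times. You instead derive the exponential tail bound for excursion depth directly, via a Chernoff-type exponential supermartingale $M_i = e^{-\theta Y_i}$ and optional stopping. Both routes end with the same $n\cdot e^{-\theta m}$ bound for $\p{B_n\le -m}$, the same $C\log n$ threshold and the same Borel--Cantelli/telescoping-tail conclusion, so the macro-structure is parallel; the difference is purely in how the per-excursion tail is obtained. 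Your method is arguably more self-contained (no external P\'olya-urn-style domination lemma is invoked), whereas the paper's coupling makes the probabilistic picture of ``independent excursions'' more transparent.

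Two technical points in your write-up need repair, though neither is fatal. First, the filtration $\cG_i := \cF_{T_{i+1}}$ cannot be right: with your conventions $\Delta Y_i = \Delta B_{T_i}$, which is $\cF_{T_i+1}\subseteq \cF_{T_{i+1}}$-measurable, so conditioning on $\cG_i$ trivializes the drift estimate. The natural choice is $\cG_i := \cF_{T_i}$, but then one must note that $T_i$ is not an $\cF$-stopping time (detecting $\Delta B_k \ne 0$ requires $\cF_{k+1}$), so the construction of $\cG_i$ requires a one-step shift. The cleanest fix avoids the subsequence altogether: work directly with $M_k := e^{-\theta B_k}$. On $\{B_k<0\}$, writing $r_k := \Cprob{\Delta B_k=0}{\cF_k}$ and $p_k := \Cprob{\Delta B_k=b}{\cF_k,\Delta B_k\ne 0}\ge p^*$, one gets $\Cexp{e^{-\theta\Delta B_k}}{\cF_k} = r_k + (1-r_k)\bigl(p_k e^{-\theta b}+(1-p_k)e^{\theta a}\bigr)\le 1$, so $M$ is already a supermartingale while $B<0$, with no paring needed. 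Second, the displayed inequality $M_{\tau_{m,k}}\ge e^{\theta m}\I{\min_{\sigma\le i\le \sigma+k}Y_i\le -m}$ is false as stated, since the minimum over $[\sigma,\sigma+k]$ could dip below $-m$ after the excursion has already hit $0$ and the stopped value was evaluated there; replace the indicator by $\I{Y_{\tau_{m,k}}\le -m}$ and the argument goes through.
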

\begin{proof}
We bound $B$ from below by another, simpler process $S=(S_n)_{n \ge 0}$. 
The conditions of the lemma imply that we may couple $B$  with a sequence $(Y_k)_{k \ge 0}$ of independent, random variables taking values in $\{-a,b\}$ with 
\[\p{Y_k=b}=\frac{a+c}{a+b}=1-\p{Y_k=-a},\]
such that for all $k \ge 0$, on the event that $B_k < 0$ and $\Delta B_k \ne 0$ we have $\Delta B_k \ge Y_k$. We define $S$ via the following coupling with $B$:
\begin{enumerate} 
\item if $B_n \ge0$, then $S_{n+1}=-a$,
\item if $B_n<0$ and $\Delta B_n=0$ then $\Delta S_n=0$,
\item if $B_n<0$ and $\Delta B_n\ne 0$ then $\Delta S_n=Y_n$.
\end{enumerate}
An illustration of the coupling can be found in Figure~\ref{fig:sticky_coupling}. Since $B_n \ge 0$ implies $B_{n+1}\ge -a$, it is immediate that $S_n \le B_n$ for all $n\ge 0$. The process $S$ is a sequence of independent negative (incomplete) excursions of a random 
process that, restricted to the non-constant steps, has independent increments with positive mean except for finitely many times almost surely. There are at most $n/2$ such excursions by time $n$, and if we collapse the constant steps, they are independent realisations of a random walk with step size in $\{-a,b\}$ and drift $c$, started at $-a$ and ended before (or when) reaching $0$. To understand their minimum, let us denote by $R=(R_n)_{n \ge 0}$ a random walk starting at $0$, with steps in $\{-a,b\}$ and strictly positive drift $c$. We define
$$\tau_1=\inf\left\{ k \colon \ R_k< 0 \right\},$$
and
$$\tau_{\ell}=\inf\left\{ k> \tau_{\ell-1}\colon R_k<R_{\tau_{\ell-1}} \right\}.$$
With positive probability, $R$ stays positive forever, and in particular, using the fact stated just before Lemma~\ref{lem:StickyCoupling}, there exists $0<p<1$ such that
$$
0<1-p=\p{\tau_{\ell}=\infty|\tau_{\ell-1}<\infty}.$$ 
Since the increments are bounded from below by $-a$, we know that $R_{\tau_{\ell}}-R_{\tau_{(\ell-1)}}\geq -a$, which together with the previous identity implies that
\[
\min_{k\geq1}\left\{R_k\right\} {\succeq}_{st} -a\cdot \operatorname{Geom}(1-p)~.
\]
From the definition of the coupling, we know that $B_n\geq S_n$ and that $S_n+a$ stochastically dominates the minimum of  $n$ realisations of $(R_k)_{k\geq 0}$. Let $(A_n)_{n \ge 1}$ be independent $\geom(1-p)$ random variables. Then
\[
\p{S_n \le -a(k+1)} \le \p{\max_{i \in [n]} A_i \ge k} \le np^k.
\]
The upper bound is at most $n^{-2}$ if $k \ge -3\log n/\log p$
and the first assertion follows from the Borel--Cantelli lemma. Taking $k=-3\log n/\log p+ \ell$, the above bound likewise implies that 
\[
\p{-\frac{S_n}{a} +3\log n/\log p\ge \ell +1 } \le n p^{-3\log n/\log p}p^\ell,
\]
and summing over $\ell \ge 0$ gives the bound
\[
\E{-\frac{S_n}{a} +3\log n/\log p} \le n p^{-3\log n/\log p}\frac{1}{1-p}\, ,
\]
which establishes the second assertion of the lemma.\qedhere
\begin{figure}\label{fig:sticky_coupling}
\begin{center}
\includegraphics[scale=0.7]{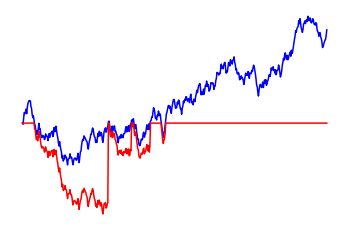}
\end{center}
\caption{Illustration of a coupling between $B$ (in blue) and $S$ (in red).}
\end{figure}
\end{proof}
We are now ready to prove the proposition. 
\begin{proof}[Proof of Proposition~\ref{prop:`lines'}] We start by proving Proposition~\ref{prop:`lines'} in the case where the hypothesis holds for all $n$ (and not all but finitely many $n$). Let $\beta \in (0,\alpha)$. We prove the proposition by applying Lemma \ref{lem:StickyCoupling} to the process $B = (B_k)_{k\geq 0}$ with $B_k:=X_k-\beta Y_k$. Note that $B$ has increments in $\{-\beta, 0,1\}$. We need to show that there exists a constant $c$ such that for all $k$, 
\begin{equation}\label{eq:edb_lowerbd}
\Cexp{\Delta B_k}{ \cF_k, B_k < 0, \Delta B_k \neq 0 }\geq c~.
\end{equation}
Define \[p^+_k=\Cprob{\Delta X_k=1 }{ \cF_k, (\Delta X_k, \Delta Y_k)\neq (0,0)}=\p{\Delta B_k=1\mid \cF_k, \Delta B_k \neq 0}\] and so 
\[1-p^+_k=\Cprob{\Delta Y_k=1 }{ \cF_k, (\Delta X_k, \Delta Y_k)\neq (0,0)}=\Cprob{\Delta B_k=-\beta }{\cF_k, \Delta B_k \neq 0}.\]
 Note that, if $\{B_k<0\}$, then $\{X_k<\beta Y_k\}$, and so $\Cexp{\Delta X_k }{ \cF_k}\geq  \alpha \Cexp{\Delta Y_k}{\cF_k}$, which implies that $p^+_k\geq \alpha (1-p^+_k)$. We split the event $\{B_k<0\}$ into two cases and show that $\Cexp{\Delta B_k}{\cF_k, \Delta B_k \neq 0 }$ is bounded below by some positive constant in both cases. If $B_k<0$ and $p^+_k\leq \frac{1/2+\beta}{1+\beta}$, then
\begin{align*}
\Cexp{\Delta B_k}{\cF_k, \Delta B_k \neq 0 }=p^+_k-\beta (1-p^+_k)
\geq (\alpha-\beta) (1-p^+_k)
\geq \frac{(\alpha-\beta) }{2+2\beta}.
\end{align*}
On the other hand, if $B_k<0$ and $p^+_k> \frac{1/2+\beta}{1+\beta}$, then 
\begin{align*}
\Cexp{\Delta B_k}{\cF_k, \Delta B_k \neq 0 }=p^+_k(1+\beta)-\beta\geq 1/2.
\end{align*} 
Therefore, \eqref{eq:edb_lowerbd} holds, with a lower bound of $\min\left\{\frac{(\alpha-\beta) }{2+2\beta},1/2\right\}$ for $c$, and the claim then directly follows from Lemma~\ref{lem:StickyCoupling}.

Finally, we prove that the first statement in the proposition still holds when the assumptions fail at a finite number of times.

We call a time $k$ \emph{bad} when $\E{\Delta X_k \mid \cF_k}<  \alpha \E{\Delta Y_k \mid \cF_k}$ and $\{X_k < \alpha Y_k\}$; otherwise we call it \emph{good}. We couple $(X, Y)$ to a slightly modified process $(X',Y')$ that has the same increments as $(X,Y)$ except at bad times, and that satisfies the assumptions at all times. Observe that for each $k$, given $\cF_k$, we know whether $k$ is bad or not. If $k$ is bad, set $(\Delta X'_k,\Delta Y'_k)=(1,0)$. If $k$ is good, set $(\Delta X'_k,\Delta Y'_k)=(\Delta X_k, \Delta Y_k)$. We claim that $(X',Y')$ satisfies the assumptions at all times. The requirement $0\leq \Delta X'_k + \Delta Y'_k \leq 1$ for all $k$ is obviously satisfied. Moreover, for bad $k$, $\E{\Delta X'_k \mid \cF_k}=1$ and $\E{\Delta Y'_k \mid \cF_k}=0$, so at bad times the second requirement is also satisfied. Finally, by construction, $X'_k\ge X_k$ and $Y'_k\le Y_k$ for all $k$, so if $k$ is good and $\{X'_k < \alpha Y'_k\}$, then also $\{X_k < \alpha Y_k\}$, and therefore 
    \[\E{\Delta X'_k \mid \cF_k}=\E{\Delta X_k \mid \cF_k}\geq  \alpha \E{\Delta Y_k \mid \cF_k} = \alpha \E{\Delta Y'_k \mid \cF_k}.\]
    Then, the first part of the proof implies that for any $\beta\in (0,\alpha)$ there exists $C>0$ such that $\p{X'_n < \beta Y'_n - C\log n \text{ i.o.}} = 0$. Finally, for $B$ the total number of bad times, for each $k$ it holds that $X'_k\le X_k+B$ and $Y'_k\ge Y_k-B$. This implies that if $X_k< \beta Y_k - 2C\log k$ then either $X'_k < \beta Y'_k - C\log n$ or $2B> C\log k$. Therefore, 
    \[\p{X_n < \beta Y_n - 2C\log n \text{ i.o.}} \le \p{X'_n < \beta Y'_n -C\log n  \text{ i.o.}}+\p{B=\infty}=0.\qedhere\]
\end{proof}

\section*{Acknowledgements}

Louigi Addario-Berry is supported by the Natural Sciences and Engineering Research Council of Canada (NSERC) and the Canada Research Chairs program. 

Simon Briend is supported by Région Île de France.

Luc Devroye is supported by the Natural Sciences and Engineering Research Council of Canada (NSERC). 

Serte Donderwinkel is supported by the CogniGron research center and the Ubbo Emmius Funds (Univ. of Groningen). 

Céline Kerriou is supported by the DFG project 444092244 ``Condensation in random geometric graphs" within the priority programme SPP~2265. 

G\'abor Lugosi is supported by Ayudas Fundación BBVA a
Proyectos de Investigación Científica 2021 and
the
Spanish Ministry of Economy and Competitiveness grant PID2022-138268NB-I00, financed by MCIN/AEI/10.13039/501100011033,
FSE+MTM2015-67304-P, and FEDER, EU.


\small 

\bibliographystyle{plainnat}
\bibliography{template}

%
%
                 
\appendix

\end{document}